\algrenewcommand\algorithmicrequire{\textbf{Input:}}
\algrenewcommand\algorithmicensure{\textbf{Output:}}
\newcommand{\ind}{\,\mbox{d}}
\newtheorem{lemma}{Lemma}
\newtheorem{theorem}{Theorem}
\newtheorem{corollary}{Corollary}
\title{A theoretical and computational framework for three dimensional inverse medium scattering using the linearized low-rank structure}
 \author[a]{Yuyuan Zhou}
\affil[a]{Academy of Mathematics and Systems Science,         Chinese Academy of Sciences, Beijing, 100190, China and School of Mathematical Sciences, University of Chinese Academy of Sciences, Beijing 100049, China}
\author[b]{Lorenzo Audibert}
\affil[b]{PRISME, EDF R\&D, 6 Quai Watier, 78400 Chatou, France and UMA, Inria, ENSTA Paris, Institut Polytechnique de Paris, 91120 Palaiseau, France.
}
\author[c]{Shixu Meng}
\affil[c]{Department of Mathematical Sciences, The University of Texas at Dallas, 75025 Richardson,  USA.
}
\author[a]{Bo Zhang}
\date{\today}
\begin{document}

\maketitle

\begin{abstract}
In this work we propose a theoretical and computational framework for solving the three dimensional inverse medium scattering problem, based on a set of data-driven basis arising from the linearized problem. This set of data-driven basis consists of generalizations of prolate spheroidal wave functions to three dimensions (3D PSWFs), the main ingredients to explore a low-rank approximation of the inverse solution. We first establish the fundamentals of the inverse scattering analysis, including regularity in a customized Sobolev space and new a priori estimate. This is followed by a computational framework showcasing computing the 3D PSWFs and the low-rank approximation of the inverse solution. These results rely heavily on the fact that the 3D PSWFs are eigenfunctions of both a restricted Fourier integral operator and a Sturm-Liouville differential operator. Furthermore we propose a Tikhonov regularization method with a customized penalty norm and a localized imaging technique to image a targeting object despite the possible presence of its surroundings. Finally various numerical examples are provided to demonstrate the potential of the proposed method. 
\end{abstract}

\section{Introduction}

Inverse scattering merits important applications in geophysical exploration, tunnel imaging, medical diagnosis, non-destructive testing and many others. It is a nonlinear and ill-posed problem, imposing challenges in both its analysis and inversion algorithm. Close to the study of this work, the linear sampling method \cite{colton1996simple} is a simple method for solving the inverse problem in the resonance region where Born approximation or physical optics is inadequate; it is worth mentioning other similar sampling methods, such as the factorization method \cite{kirsch1998characterization}, the generalized linear sampling method \cite{audibert2014generalized}, and  an alternative linear sampling method \cite{arens2015indicator}, among many others \cite{griesmaier2011multi,harris2020orthogonality,ito2012direct,liu2018extended,liu2022modified,potthast2010study}; for a more complete picture, we refer to the monographs \cite{cakoni2014qualitative,cakoni2011linear,colton2012inverse,kirsch2007factorization}. In a recent work \cite{audibert2024shape} it was shown that, for a linear inverse problem, a formulation of the linear sampling method is capable of reconstructing the parameter, extending its shape reconstruction capability. The ill-posedness nature of inverse scattering has been elucidated in the work \cite{kirsch2017remarks}, showing that the ill-posedness of the inverse scattering problem is similar to the ill-posedness of its linearized (i.e. Born) approximation; the work \cite{moskow2008convergence} on inverse Born series demonstrated this property via a similar problem. 
To further shed light on the ill-posedness and to bridge the gap between data science and inverse scattering, the recent work \cite{meng23data} studied an intrinsic data-driven basis for the linearized inverse scattering problem and demonstrated a new explicit a priori estimate similar to the Holder-Logarithmic estimate \cite{novikov22}; such a set of data-driven basis naturally leads to a low-rank structure for a computational inversion technique \cite{zhou2024exploring}, facilitating both computational efficiency, robustness and a theoretical Lipchitz stability within a low-rank space. Such a linearized low-rank structure merits further application \cite{cakoni2025recovery} in solving nonlinear inverse scattering problems in connection with the inverse Born series. The above-mentioned low-rank structure has been developed in two dimensions.

In this work we explore the low-rank structure for the three dimensional inverse scattering problem. Similar to the two dimensional case, the set of data-driven basis consists of generalization of prolate spheroidal wave functions to three dimensions \cite{Slepian64}, and we refer to such functions as 3D PSWFs in this work. Exploring the low-rank structure in three dimensions is a highly nontrivial study since both the analysis and the computation are more complicated. Our study depends heavily on the fact that the 3D PSWFs are eigenfunctions of both a restricted Fourier integral operator and a Sturm-Liouville differential operator, which is usually referred to as the dual property of PSWFs in the one dimensional case \cite{osipov2013prolate}. Taking full advantage of the dual property, this work contributes to the following: (1) establishing the fundamentals of the inverse scattering analysis, including a regularity estimate using the customized Sobolev space, a new a priori estimate similar to the Holder-Logarithmic estimate \cite{novikov22}, and a new Lipchitz stability estimate in a low-rank space; (2) proposing a computational framework based on the low-rank structure given by a finite set of 3D PSWFs, sharing similar spirit to the singular value decomposition but facilitating robustness and efficiency thanks to a related Sturm-Liouville problem; (3) demonstrating a Tikhonov regularization method   with a $H^s_c$ penalty term (where $H^s_c$ is a customized Sobolev space equivalent to the standard Sobolev space $H^s$) with a straightforward modification of the low-rank approximation; (4) facilitating a localized imaging technique (i.e., imaging targeting objects inside a ball when there are possible objects around) via the double orthogonality property of the 3D PSWFs (i.e., the 3D PSWFs are mutually orthogonal in both $L^2(\mathbb{R}^3)$ and the unit ball). As a next step, we plan to combine this linearized low-rank structure with other nonlinear inversion techniques to address more challenges. In a broader perspective, the low-rank structure is in the spirit of learning low dimensional features by machine learning \cite{goodfellow2016deep,vapnik1995support}; see recent machine learning approaches to inverse scattering, to name a few \cite{desai2025neural,khoo2019switchnet,li2024reconstruction,liu2022deterministic,meng2024kernel,sun2024learned,zhou2025recovery}.

The remaining of the paper is organized as follows. In Section \ref{section: model}, we introduce the mathematical model of inverse scattering in three dimensions. Section \ref{section: analysis} consists of various aspects of the analysis, including the introduction of the 3D PSWFs, the customized Sobolev space, the low-rank approximation with a priori estimate, the Tikhonov regularization method, and the localized imaging technique. Section \ref{section: computation} is mainly on the computational development, including computing the 3D PSWFs system and the approximate solution in the low-rank space. Finally various numerical experiments are conducted in Section \ref{section: numerical}, illustrating the full potential of the proposed low-rank structure.

\section{Mathematical model of inverse scattering in three dimensions} \label{section: model}
Let $k>0$ be  the wave number and the contrast function $q(x)\in L^\infty (\Omega)$ be supported in an open and bounded set $\Omega$. Without loss of Generality we assume that $\Omega \subset B(0,1)$;  here $B(0,1)\subset \mathbb{R}^3$ is the unit ball.   $\partial \Omega$ is assumed to be  Lipschitz  and  $\mathbb{R}^3\setminus\bar{\Omega}$  is connected. Let  $u^{i}(x;\hat{\theta};k)=e^{ikx\cdot \hat{\theta}}$ be a plane wave  with incident direction $\hat{\theta}\in\mathbb{S}^2:=\{x\in\mathbb{R}^3:|x|=1\}$. In the context of acoustic scattering, the refractive index is related to $1+q$ and the direct scattering  due to the plane wave  $u^{i}(x;\hat{\theta};k)=e^{ikx\cdot \hat{\theta}}$  is to determine the total field $u^t\in H_{loc}^1(\mathbb{R}^3)$ such that
 \begin{equation}\label{eq: Direct problem}
     \begin{aligned}
           \Delta u^t+k^2(1+q)u^t&=0 \quad ~\mbox{in} \quad \mathbb{R}^3,\\
    u^t=u^{i}+u^s&,\\
     \lim_{r:=|x|\to\infty}r\Big(\frac{\partial u^s}{\partial r}-iku^s \Big)&=0.
     \end{aligned}
 \end{equation}
Here  $u^s$ represents the scattered wave field. A solution which satisfies the third equation, i.e., the Sommerfeld condition, uniformly for all directions $\hat{x}:=x/|x|\in \mathbb{S}^2$ is called a radiating solution.  

The above problem can be casted as the following problem to find a radiating solution $u^s\in H^1_{loc}(\mathbb{R}^3)$ to 
 \begin{equation}\label{eq: full model}
     \Delta u^s+k^2(1+q)u^s=-k^2qf,
 \end{equation}
 where $f\in L^\infty(\mathbb{R}^3)$. To get the direct scattering problem \eqref{eq: Direct problem}, one simply sets $f$ by $e^{ikx\cdot \hat{\theta}}$.   According to \cite{colton2012inverse}, there exists a unique radiating solution to \eqref{eq: Direct problem} and the solution satisfies the Lippmann-Schwinger integral equation,
\begin{equation*}
    u^s(x;\hat{\theta};k)=k^2\int_{\mathbb{R}^3}\Phi(x,y)(u^s(y;\hat{\theta};k)+e^{ik\hat{\theta}\cdot y})q(y)dy,\quad x\in \mathbb{R}^3,
\end{equation*}
where 
\begin{equation*}
    \Phi(x,y):=\frac{1}{4\pi}\frac{e^{ik|x-y|}}{|x-y|},~x\neq y,
\end{equation*}
is the fundamental function for the Helmholtz equation.
The scattered field has the following asymptotic behavior (cf.\cite{colton2012inverse})
\begin{equation*}
    u^s(x;\hat{\theta};k)=\frac{e^{ikr}}{r}\Big(u^\infty (\hat{x};\hat{\theta};k)+\mathcal{O}\Big(\frac{1}{r}\Big) \Big)\mbox{  as  } r=|x|\to\infty,
\end{equation*}
which holds uniformly with respect to all  directions $\hat{x} = x/|x| \in\mathbb{S}^2$. It is known \cite{colton2012inverse} that the far-field pattern can be represented by 
\begin{equation} \label{full far field}
u^\infty(\hat{x};\hat{\theta};k)=\frac{k^2}{4\pi}\int_{\mathbb{R}^3}e^{-ik\hat{x}\cdot y}\Big(u^{i}(y;\hat{\theta};k)+u^s(y;\hat{\theta};k)\Big)q(y)dy.
\end{equation}

The multi-static data at a fixed frequency over the full aperture are given by
\begin{equation*}
    \{u^\infty(\hat{x};\hat{\theta};k):\,\hat{x},\hat{\theta}\in\mathbb{S}^2\}.
\end{equation*}
The inverse scattering problem  is to determine the contrast $q$ from these far field data. It is known that there exists a unique solution to this three dimensional inverse scattering problem, see \cite{colton2012inverse}. In practical scenarios, far-field data is commonly encountered as a discrete dataset
\begin{equation*}
 \{u^\infty(\hat{x}_m;\hat{\theta}_\ell;k):m=1,2,\dots,N_1,~\ell=1,2\dots,N_2\}.    
\end{equation*}
The problem is nonlinear and severely ill-posed.  Recent work \cite{kirsch2017remarks} indicates that the eigenvalues of the data operator and these of its linearized version (i.e., Born data operator) decay at the same rate.  Therefore, we are motivated to investigate the ill-posed nature of the Born model as a step towards the full model.   The Born approximation $u^s_b(x;\hat{\theta};k)$ is the unique radiating solution to the Born model
\begin{equation}\label{eq: Born model}
    \Delta u^s_b+k^2 u_b^s=-k^2qe^{ikx\cdot\hat{\theta}}\mbox{ in } \mathbb{R}^3.
\end{equation}
Similarly, $u^s_b(x;\hat{\theta};k)$ has the asymptotic behavior 
\begin{equation*}
    u^s_b(x;\hat{\theta};k)=\frac{e^{ikr}}{r}\Big(u_b^\infty (\hat{x};\hat{\theta};k)+\mathcal{O}\Big(\frac{1}{r}\Big)\Big)\mbox{  as  } r=|x|\to\infty,
\end{equation*}
where the Born far field pattern is given by
\begin{equation}\label{born far field}
    u_b^\infty (\hat{x};\hat{\theta};k)=\frac{k^2}{4\pi}\int_{\mathbb{R}^3}e^{ik(\hat{\theta}-\hat{x})\cdot y}q(y)dy.
\end{equation}
The method of inverse Born series \cite{moskow2008convergence} can be applied where the key step is to understand the ill-posed Born inverse scattering problem \eqref{born far field}; see \cite{cakoni2025recovery} for the inverse Born series for solving inverse scattering with two unknown coefficients in two dimensions. We remark that we formulate the problem for a contrast in a unit ball; for a general contrast, it can be formulated via appropriate scaling, see for instance \cite{meng23data}.

Loosely speaking, the formula \eqref{born far field} reveals that the Born far field pattern $ u_b^\infty (\hat{x};\hat{\theta};k)$  is precisely the (scaled-) Fourier transform of  contrast $q$ evaluated only at $k(\hat{\theta}-\hat{x})$; the set of all possible $k(\hat{\theta}-\hat{x})$ leads to a ball of radius $2k$. Such a restricted Fourier integral operator leads to a data-driven basis for the inverse scattering problem in two dimensions \cite{meng23data}. A low-rank structure can be derived using a finite set of the basis functions, leading to a recent computational approach \cite{zhou2024exploring}. However it is highly non-trivial to generalize to the three dimensional case. In this work we aim to establish a theoretical and computational framework in three dimensions, with applications to Tikhonov regularization with customized penalty term and localized imaging.

Now we give the following formulation. Introduce $c=2k$ and
\begin{equation} \label{section intro Born processed data}
    u_b(p;c) := \frac{4 \pi}{k^2} u_b^\infty (\hat{x};\hat{\theta};k) \quad \mbox{if} \quad p = \frac{\hat{\theta}-\hat{x}}{2}.
\end{equation}
Since $\hat{x},\hat{\theta}\in\mathbb{S}^2$, then it follows directly that $\{\frac{\hat{\theta}-\hat{x}}{2}: \hat{x},\hat{\theta}\in\mathbb{S}^2\} = \overline{B}$  where $B$ denotes the unit ball $B(0,1)$.
Note that even though there are many possible $(\hat{\theta},\hat{x})$ which yield the same $p=\frac{\hat{\theta}-\hat{x}}{2}\in B$, however the value of $u_b(p;c)$ is uniquely determined due to \eqref{born far field}. By a direct calculation, one verifies that
\begin{equation} \label{section intro data q to u_b}
    u_b(p;c) = \int_{B} e^{i c p \cdot \tilde{p}} q(\tilde{p}) {\rm d} \tilde{p}.
\end{equation}
This is the formulation of the linearized inverse problem of reconstructing the unknown $q$ from the processed data $u_b(p;c)$ given by \eqref{section intro data q to u_b}. For the full model \eqref{eq: full model}, the full far field data (with potential noise) $u^\infty (\hat{x};\hat{\theta};k)$ will lead to a perturbation $u_b^\delta$ such that $\|u_b^\delta - u_b\| \le \delta$ for some perturbation level $\delta>0$. We will demonstrate the a priori estimate and computational frameworks for general perturbed data $u_b^\delta$.

\section{A priori estimate and reconstruction formula} \label{section: analysis}
\subsection{Generalization of prolate spheroidal wave functions} \label{subsection: concept of 3D PSWFs}
It is convenient to introduce the restricted Fourier integral operator  $\mathcal{F}^c:L^2(B)\mapsto L^2(B)$   by
\begin{equation*}
    \mathcal{F}^c [g](x)=\int_{B}e^{ic x\cdot y}g(y)dy,~x\in B,
\end{equation*}
where $c>0$ is the bandwidth parameter. The prolate spheroidal wave functions were studied in a series of work \cite{slepian1961prolate,Slepian64}, and in particular   \cite{Slepian64} showed that  the restricted Fourier integral operator is associated with the eigensystem $\{\psi_{m,n,l}(x;c),\alpha_{m,n}(c)\}^{l\in\mathbb{I}(m)}_{m,n\in \mathbb{N}}$ such that
\begin{align}\label{eigen_R_Fourier}
        \mathcal{F}^c [\psi_{m,n,l}](x)=\alpha_{m,n}(c)\psi_{m,n,l}(x;c), 
    \end{align}
where $\mathbb{N}=\{0,1,2,3,\dots\}$ and $\mathbb{I}(m)=\{l\in\mathbb{Z}:|l|\leq m\}$.  The eigenfunctions $\{\psi_{m,n,l}\}_{m,n\in \mathbb{N}}^{l\in\mathbb{I}(m)}$ are real-valued, analytical and  form a complete orthogonal system of $L^2(B)$, and the eigenvalues  $\{\alpha_{m,n}(c)\}$ satisfy following property,
\begin{equation*}
           \alpha_{m,n}(c)\rightarrow 0 \mbox{    as    }  m,n\longrightarrow +\infty,  \quad \mbox{and} \quad  |\alpha_{m,n_1}(c)|>|\alpha_{m,n_2}(c)|>0 \mbox{ for } n_1>n_2.
    \end{equation*}
    The above eigensystem  precisely leads to the data-driven basis $\{\psi_{m,n,l}(x;c)\}^{l\in\mathbb{I}(m)}_{m,n\in \mathbb{N}}$, and a low-rank structure can be formulated using a finite set of these basis functions.  In this work the eigenfunctions are normalized such that 
    $$\int_{B(0,1)}\psi^2_{m,n,l}(y;c) {\rm ~d} y=1, \quad  ~m,n\in \mathbb{N},l\in\mathbb{I}(m).
    $$
From now on, we refer  to the eigenfunction $\psi_{m,n,l}(x;c)$ as the 3D PSWF and the associated eigenvalue $\alpha_{m,n}(c)$ as the prolate eigenvalue.

One of the most important properties of the family of PSWFs is the so-called dual property \cite{osipov2013prolate}: the PSWFs are eigenfunctions of an integral operator and of a differential operator at the same time. In our case, the 3D PSWFs are also  eigenfunctions of the following Sturm-Liouville differential operator such that
\begin{equation}\label{eq: sturm-liouvill}
    \mathcal{D}_c [\psi_{m,n,l}](x)=\chi_{m,n} \psi_{m,n,l}(x),\quad x\in B,
\end{equation}
where $\mathcal{D}_{c}$ in spherical coordinate $(r,\theta,\phi)$ is given by
\begin{align*}
    \mathcal{D}_{c} := -(1-r^2)\partial_r^2-\frac{2}{r}\partial_r+4r\partial_r-\frac{1}{r^2}\Delta_{\mathbb{S}^2}+c^2r^2
\end{align*}
with the Laplace–Beltrami operator $\Delta_{\mathbb{S}^2}$ in spherical coordinate given by 
\begin{equation*}
        \Delta_{\mathbb{S}^2} = \frac{1}{\sin \theta} \frac{\partial}{\partial \theta} \Big( \sin \theta \frac{\partial}{\partial \theta}  \Big) + \frac{1}{\sin^2 \theta} \frac{\partial^2}{\partial \phi^2}.
\end{equation*}
Here with the spherical coordinate, one represents $x  = (x_1,x_2,x_3)^T = r \hat{x}$ in cartesian coordinate by
$$
x_1 = r \sin \theta \cos \phi, \quad x_2 = r\sin \theta \sin \phi, \quad x_3 =  r \cos \theta.
$$
The eigenvalue $\chi_{m,n}(c)$ will be refereed to as the Sturm-Liouville eigenvalue and they are sorted according to $ 0<\chi_{m,0}(c)<\chi_{m,1}(c)<\chi_{m,2}(c)<\cdots $ for any $m=0,1,\cdots$. The Sturm-Liouville eigenvalues approach to $+\infty$ in contrast to the prolate eigenvalues.

This dual property is due to the fact that the restricted Fourier integral operator $\mathcal{F}_c$ and the Sturm-Liouville differential operator $\mathcal{D}_c$ commutes with each other, i.e.,
$$
\langle \mathcal{F}_c g, \mathcal{D}^*_c h \rangle = \langle \mathcal{D}_c g, \mathcal{F}^*_c h \rangle
$$
for any pair of smooth functions $g$ and $h$. The  restricted Fourier integral operator $\mathcal{F}_c$, stemming from the inverse scattering problem, can be seen in the ``main stage''; the seemly unrelated Sturm-Liouville differential operator $\mathcal{D}_c$, is the ``backstage'' hero in both the analysis and computation. 
More details about the family of PSWFs can be found in \cite{greengard2024generalized,osipov2013prolate,Slepian64,ZLWZ20}. In the following, we will demonstrate the theoretical and computational framework, using exactly the interplay between the two different operators. In the following, we simply drop the parameter $c$ when there is no confusion.

\subsection{Customized Sobolev space and approximation theory} \label{subsection: comtomized Sobolev space}
With the help of Sturm-Liouville theory, one can define the following customized Sobolev space
\begin{equation*}
    H_{c}^s(B):=\{ u \in L^2(B): \sum^{l\in\mathbb{I}(m)}_{m,n\in \mathbb{N}}\chi^s_{m,n}|\langle u, \psi_{m,n,\ell} \rangle|^2 < \infty  \}
\end{equation*}
for any integer $s=1,2,\cdots$,
where $\langle u, \psi_{m,n,\ell} \rangle = \int_B u(x) \overline{\psi}_{m,n,\ell}(x) {\rm d}x$ (here we have formally adopted the conjugate in the inner product but note that $\psi_{m,n,\ell}$ is real-valued). By interpolation theory \cite{mclean2000strongly}, the above Sobolev space $H_{c}^s(B)$ is well defined for any real number $s\ge0$.

Now for any function $u\in H_{c}^s(B)$, and its approximation
\begin{equation} \label{section: theory approximation in H_c^s}
    \pi_\epsilon u = \sum_{\chi_{m,n}\le 1/\epsilon} \langle u, \psi_{m,n,\ell} \rangle \psi_{m,n,\ell}
\end{equation}
with a positive parameter $\epsilon>0$, one can estimate the approximation error in $L^2(B)$.
\begin{lemma} \label{lemma bound pi_epsilon u - u}
Let $\epsilon>0$ be a small positive parameter, and $\pi_\epsilon u $ be the approximation given by \eqref{section: theory approximation in H_c^s}, then for any $u \in H_c^s(B)$, it holds that
\begin{equation}
    \|\pi_\epsilon u - u\|_{L^2(B)} < \epsilon^{s/2}\|u\|_{H_c^s(B)}.
\end{equation}
\end{lemma}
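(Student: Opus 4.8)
The plan is to exploit the fact, already recorded in Section \ref{subsection: comtomized Sobolev space}, that the 3D PSWFs $\{\psi_{m,n,\ell}\}$ form a complete orthonormal system of $L^2(B)$ (they are normalized so that $\int_B \psi_{m,n,\ell}^2 = 1$ and are mutually orthogonal). Consequently both $u$ and its truncation $\pi_\epsilon u$ admit expansions in this basis, and the truncation error is simply the discarded tail of the series. Concretely, by completeness one writes $u = \sum_{m,n,\ell}\langle u,\psi_{m,n,\ell}\rangle\,\psi_{m,n,\ell}$ in $L^2(B)$, and subtracting the definition \eqref{section: theory approximation in H_c^s} yields $\pi_\epsilon u - u = -\sum_{\chi_{m,n}>1/\epsilon}\langle u,\psi_{m,n,\ell}\rangle\,\psi_{m,n,\ell}$; that is, the error collects exactly those modes whose Sturm--Liouville eigenvalue exceeds the cutoff $1/\epsilon$.

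First I would apply Parseval's identity to turn the $L^2$ norm of the error into the sum of squared coefficients over the discarded indices,
\[
\|\pi_\epsilon u - u\|_{L^2(B)}^2 = \sum_{\chi_{m,n}>1/\epsilon}|\langle u,\psi_{m,n,\ell}\rangle|^2 .
\]
The key step is then a weighting argument: on each discarded index we have $\chi_{m,n}>1/\epsilon$, hence $\epsilon\chi_{m,n}>1$, and since $s>0$ and $t\mapsto t^s$ is increasing this gives $1 < (\epsilon\chi_{m,n})^s = \epsilon^s\chi_{m,n}^s$. Inserting this factor, which is strictly larger than one, into each term of the tail and then enlarging the index set to the full basis gives
\[
\sum_{\chi_{m,n}>1/\epsilon}|\langle u,\psi_{m,n,\ell}\rangle|^2 < \epsilon^s\sum_{\chi_{m,n}>1/\epsilon}\chi_{m,n}^s|\langle u,\psi_{m,n,\ell}\rangle|^2 \le \epsilon^s\|u\|_{H_c^s(B)}^2 .
\]
Taking square roots yields the claimed estimate $\|\pi_\epsilon u - u\|_{L^2(B)} < \epsilon^{s/2}\|u\|_{H_c^s(B)}$.

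This is a standard spectral-truncation estimate, so I do not anticipate a serious analytical obstacle; the substantive ingredients, namely that $\{\psi_{m,n,\ell}\}$ is a complete orthonormal basis of $L^2(B)$ and that $H_c^s(B)$ carries the norm $\sum\chi_{m,n}^s|\langle u,\psi_{m,n,\ell}\rangle|^2$, are supplied by Section \ref{subsection: comtomized Sobolev space}. The only point requiring a little care is the \emph{strict} inequality: it relies on every discarded mode satisfying $\chi_{m,n}>1/\epsilon$ strictly, which holds because the cutoff in \eqref{section: theory approximation in H_c^s} retains precisely the indices with $\chi_{m,n}\le 1/\epsilon$, and it tacitly assumes $u\neq 0$ so that the right-hand side remains positive even when the tail sum happens to be empty. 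I would also remark that, since the definition uses integer $s$ extended to real $s\ge 0$ by interpolation, the monotonicity bound $1<(\epsilon\chi_{m,n})^s$ is valid for all real $s>0$, so non-integer exponents need no separate treatment.
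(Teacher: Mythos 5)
Your proof is correct and takes essentially the same route as the paper's: expand the truncation error over the discarded modes, apply Parseval, insert the factor $1<(\epsilon\chi_{m,n})^s$ valid on each tail index, and enlarge the sum to the full $H_c^s(B)$ norm. In fact your write-up is slightly more careful than the paper's, which conflates $\|\pi_\epsilon u-u\|_{L^2(B)}$ with its square in the intermediate identity and does not record the tacit assumption $u\neq 0$ needed for the strict inequality when the tail is empty.
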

\begin{proof}
For any $u \in H_c^s(B)$, it follows that
\begin{eqnarray*}
    \|\pi_\epsilon u - u\|_{L^2(B)} &=& \Big\| \sum_{\chi_{m,n}> 1/\epsilon} \langle u, \psi_{m,n,\ell} \rangle \psi_{m,n,\ell} \Big\|_{L^2(B)} = \sum_{\chi_{m,n} > 1/\epsilon} |\langle u, \psi_{m,n,\ell} \rangle|^2 \\
    &\le& \epsilon^s\sum_{\chi_{m,n} > 1/\epsilon} \chi_{m,n}^s|\langle u, \psi_{m,n,\ell} \rangle|^2 \le \epsilon^s\|u\|^2_{H_c^s(B)}.
\end{eqnarray*}
This completes the proof.
\end{proof}

The customized Sobolev space $H_c^s(B)$ can be related to the standard Sobolev space. In the inverse scattering problem, the unknown contrast may encounter non-smoothness, which motivates us to consider functions in $H^s(B)$ with $0<s<1$.
\begin{lemma} \label{lemma H_c^s bounded by H^s}
    If $u\in H^s(B)$ for any $0<s<1$, then $u\in H_c^s(B)$ and 
    \begin{equation*}
      \|u\|_{H_c^s(B)} \le (1+c^2)^{s/2} \|u\|_{H^s(B)} .
    \end{equation*}
\end{lemma}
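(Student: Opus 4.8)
The plan is to reduce the claim to the two endpoint cases $s=0$ and $s=1$ and then recover every intermediate $0<s<1$ by interpolation, using the fact (already granted in the excerpt) that both $\{H^s(B)\}$ and $\{H_c^s(B)\}$ form interpolation scales. The case $s=0$ is immediate: $H_c^0(B)=L^2(B)=H^0(B)$ with equal norms, since $\|u\|_{H_c^0(B)}^2=\sum_{m,n,\ell}|\langle u,\psi_{m,n,\ell}\rangle|^2=\|u\|_{L^2(B)}^2$ by completeness and orthonormality of $\{\psi_{m,n,\ell}\}$, and $(1+c^2)^{0}=1$. Everything therefore hinges on the endpoint $s=1$, where the goal is
\[
\|u\|_{H_c^1(B)}^2 \le (1+c^2)\,\|u\|_{H^1(B)}^2 .
\]

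For the $s=1$ estimate I would first recast $\mathcal{D}_c$ in divergence (Sturm--Liouville) form. A short computation shows the radial part of $\mathcal{D}_c$ equals $-r^{-2}\partial_r\big(r^2(1-r^2)\partial_r\,\cdot\,\big)$, so that
\[
\mathcal{D}_c = -\frac{1}{r^2}\partial_r\big(r^2(1-r^2)\partial_r\big) - \frac{1}{r^2}\Delta_{\mathbb{S}^2} + c^2 r^2 ,
\]
which is self-adjoint in $L^2(B)$ with the volume element $r^2\sin\theta\,\mathrm{d}r\,\mathrm{d}\theta\,\mathrm{d}\phi$. Since the $\psi_{m,n,\ell}$ diagonalize $\mathcal{D}_c$ and are $L^2(B)$-orthonormal, for $u$ smooth (or in the finite span of eigenfunctions) one has the exact identity $\|u\|_{H_c^1(B)}^2=\langle\mathcal{D}_c u,u\rangle$. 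Integrating by parts in $r$ and on $\mathbb{S}^2$, and observing that the degenerate weight $(1-r^2)$ annihilates the boundary contribution at $r=1$ while the factor $r^2$ annihilates it at $r=0$, yields the Dirichlet form
\[
\langle\mathcal{D}_c u,u\rangle = \int_0^1\!\!\int_{\mathbb{S}^2}\Big[(1-r^2)\,r^2\,|\partial_r u|^2 + |\nabla_{\mathbb{S}^2}u|^2 + c^2 r^4\,|u|^2\Big]\,\mathrm{d}\sigma\,\mathrm{d}r .
\]

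The comparison with $\|u\|_{H^1(B)}^2$ is then a term-by-term estimate using only $0\le r\le 1$. Writing $\|u\|_{H^1(B)}^2=\int_0^1\!\int_{\mathbb{S}^2}\big(r^2|u|^2+r^2|\partial_r u|^2+|\nabla_{\mathbb{S}^2}u|^2\big)\,\mathrm{d}\sigma\,\mathrm{d}r$ (from $|\nabla u|^2=|\partial_r u|^2+r^{-2}|\nabla_{\mathbb{S}^2}u|^2$), I bound $(1-r^2)r^2\le r^2$ in the radial-gradient term, leave the angular term unchanged, and use $r^4\le r^2$ so the potential term is $\le c^2\|u\|_{L^2(B)}^2\le c^2\|u\|_{H^1(B)}^2$. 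Collecting these gives $\langle\mathcal{D}_c u,u\rangle\le\|u\|_{H^1(B)}^2+c^2\|u\|_{H^1(B)}^2=(1+c^2)\|u\|_{H^1(B)}^2$. A routine density argument then upgrades this from smooth $u$ to all of $H^1(B)$: for $u_j\to u$ in $H^1(B)$ with $u_j$ smooth, the inequality makes $(u_j)$ Cauchy in $H_c^1(B)$, and matching the two $L^2(B)$ limits identifies the limit as $u\in H_c^1(B)$, whence the bound passes to $u$.

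Finally, having the identity map bounded $L^2\to L^2$ with norm $1$ and $H^1(B)\to H_c^1(B)$ with norm $(1+c^2)^{1/2}$, the interpolation property of operator norms together with $[L^2,H^1]_s=H^s$ and $[L^2,H_c^1]_s=H_c^s$ gives the identity bounded $H^s(B)\to H_c^s(B)$ with norm at most $1^{1-s}\big((1+c^2)^{1/2}\big)^{s}=(1+c^2)^{s/2}$, which is exactly the asserted inequality. I expect the main obstacle to be the $s=1$ quadratic-form step: correctly producing the divergence form of $\mathcal{D}_c$, confirming that the singular endpoint $r=0$ and the degenerate endpoint $r=1$ contribute no boundary terms so that $\langle\mathcal{D}_c u,u\rangle$ is genuinely the Dirichlet form above, and justifying the passage from smooth data to general $H^1(B)$. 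The interpolation step is essentially bookkeeping once one checks that the endpoint constants multiply to the clean factor $(1+c^2)^{s/2}$.
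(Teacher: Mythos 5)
Your proposal is correct and follows essentially the same route as the paper's proof: reduce to the endpoint $s=1$ by interpolation, identify $\|u\|_{H_c^1(B)}^2$ with the quadratic form $\langle\mathcal{D}_c u,u\rangle$, integrate by parts (with the boundary terms killed by the weights $r^2$ and $1-r^2$), and bound the resulting Dirichlet form term by term via $(1-r^2)r^2\le r^2$ and $r^4\le r^2$ to get the constant $1+c^2$. The only differences are presentational --- you pass through the divergence form of $\mathcal{D}_c$ and spell out the $s=0$ endpoint, the density step, and the interpolation bookkeeping, which the paper leaves implicit.
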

\begin{proof}
By the interpolation theory for Sobolev spaces \cite{mclean2000strongly}, it is sufficient to prove the lemma for $s=1$. For any smooth $u$, it follows from the definition of the Sturm-Liouville differential operator that
\begin{eqnarray*}
    \|u\|^2_{H_c^1(B)} & =& \langle \mathcal{D}_cu,u  \rangle = \int_B \Big( -(1-r^2)\partial_r^2u-\frac{2}{r}\partial_r u + 4r\partial_r u-\frac{1}{r^2}\Delta_{\mathbb{S}^2}u + c^2r^2 u \Big) \overline{u} {\rm d} x \\
    &=&   \int_0^{\pi} \int_0^{2\pi} \int_0^1 \Big( 
    - \sin \theta (r^2-r^4) \partial_r^2u - 2\sin\theta ~r \partial_r u  + 4\sin\theta  ~r^3  \partial_r u\\
    && \hspace{2.5cm} - \sin\theta  \Big( \frac{1}{\sin \theta} \frac{\partial}{\partial \theta} \Big( \sin \theta \frac{\partial u}{\partial \theta}  \Big) + \frac{1}{\sin^2 \theta} \frac{\partial^2 u}{\partial \phi^2 } \Big) + c^2 r^4  \sin\theta u
    \Big) \overline{u} ~{\rm d} r  \,{\rm d} \phi \, {\rm d} \theta.
\end{eqnarray*}
To simplify the above formula, integration by parts yields that
\begin{eqnarray*}
    &&\int_0^{\pi} \int_0^{2\pi} \int_0^1 \Big( 
    - \sin \theta (r^2-r^4) \partial_r^2u - 2\sin\theta ~r \partial_r u  + 4\sin\theta  ~r^3  \partial_r u
    \Big) \overline{u} ~ {\rm d} r \, {\rm d} \phi \, {\rm d} \theta \\
    && = \int_0^{\pi} \int_0^{2\pi} \int_0^1   
     \sin \theta (r^2-r^4) \partial_r u  
    \partial_r\overline{u} ~ {\rm d} r \, {\rm d} \phi \, {\rm d} \theta, 
\end{eqnarray*}
and
\begin{eqnarray*}
    &&\int_0^{\pi} \int_0^{2\pi} \int_0^1 (- \sin\theta)  \Big( \frac{1}{\sin \theta} \frac{\partial}{\partial \theta} \Big( \sin \theta \frac{\partial u}{\partial \theta}  \Big) + \frac{1}{\sin^2 \theta} \frac{\partial^2 u}{\partial \phi^2 } \Big) \overline{u} ~ {\rm d} r \, {\rm d} \phi \, {\rm d} \theta \\
    && = \int_0^{\pi} \int_0^{2\pi} \int_0^1 \Big(  
     \sin \theta   \partial_\theta u \partial_\theta \overline{u}   + \frac{1}{\sin \theta}   \partial_\phi u \partial_\phi \overline{u} \Big)
      ~ {\rm d} r \, {\rm d} \phi \, {\rm d} \theta. 
\end{eqnarray*}
Use the above two identities one can conclude that
\begin{eqnarray*}
    \|u\|^2_{H_c^1(B)} & =& \langle \mathcal{D}_cu,u  \rangle = \int_B \Big( -(1-r^2)\partial_r^2u-\frac{2}{r}\partial_r u + 4r\partial_r u-\frac{1}{r^2}\Delta_{\mathbb{S}^2}u + c^2r^2 u \Big) \overline{u} {\rm d} x \\
    &=&   \int_0^{\pi} \int_0^{2\pi} \int_0^1   
     \Big( \sin \theta (r^2-r^4) \partial_r u  
    \partial_r\overline{u}  + \sin \theta   \partial_\theta u \partial_\theta \overline{u}   + \frac{1}{\sin \theta}   \partial_\phi u \partial_\phi \overline{u} + c^2r^4 \sin\theta u \overline{u} \Big)~ {\rm d} r \, {\rm d} \phi \, {\rm d} \theta.
\end{eqnarray*}
To connect with the standard Sobolev norm, one can proceed to estimate the above right hand side by
\begin{eqnarray*}
  &&  \int_0^{\pi} \int_0^{2\pi} \int_0^1   
     \Big( \sin \theta (r^2-r^4) \partial_r u  
    \partial_r\overline{u}  + \sin \theta   \partial_\theta u \partial_\theta \overline{u}   + \frac{1}{\sin \theta}   \partial_\phi u \partial_\phi \overline{u} + c^2r^4 \sin\theta u \overline{u} \Big)~ {\rm d} r \, {\rm d} \phi \, {\rm d} \theta \\
      \le &&  \int_0^{\pi} \int_0^{2\pi} \int_0^1   
     \Big( \sin \theta ~r^2 |\partial_r u|^2  
      + \sin \theta   |\partial_\theta u|^2    + \frac{1}{\sin \theta}   |\partial_\phi u |^2   + c^2r^2 \sin\theta |u|^2  \Big)~ {\rm d} r \, {\rm d} \phi \, {\rm d} \theta \\
      =&& \|\nabla u\|^2_{L^2(B)} + c^2 \|  u\|^2_{L^2(B)} \le (c^2+1) \|  u\|^2_{H^1(B)},
\end{eqnarray*}
which allows to conclude that
$$
\|u\|^2_{H_c^1(B)} \le (c^2+1) \|  u\|^2_{H^1(B)}.
$$
This completes the proof.
\end{proof}
Finally, for any function $u \in H^s(B)$ with $0<s<1$, the following approximation error can be proved.
\begin{theorem} \label{theorem pi_epsilon u - u bound in H^s norm}
Let $\epsilon>0$ be a small positive parameter, and $\pi_\epsilon u $ be the approximation given by \eqref{section: theory approximation in H_c^s}, then for any $u \in H^s(B)$ with $0<s<1$, it holds that
\begin{equation}
    \|\pi_\epsilon u - u\|_{L^2(B)} < \epsilon^{s/2} (1+c^2)^{s/2}\|u\|_{H^s(B)}.
\end{equation}    
\end{theorem}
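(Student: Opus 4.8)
The plan is to obtain the claimed bound by simply chaining the two preceding results, since all the analytic work has already been carried out in Lemma \ref{lemma bound pi_epsilon u - u} and Lemma \ref{lemma H_c^s bounded by H^s}. First I would fix $u \in H^s(B)$ with $0<s<1$ and invoke Lemma \ref{lemma H_c^s bounded by H^s} to conclude that $u$ in fact lies in the customized space $H_c^s(B)$, together with the norm comparison $\|u\|_{H_c^s(B)} \le (1+c^2)^{s/2}\|u\|_{H^s(B)}$. This membership is exactly what is required to make the truncated expansion $\pi_\epsilon u$ (which is defined through the Sturm--Liouville eigenvalues $\chi_{m,n}$) and the subsequent estimate meaningful for a contrast $u$ that is only prescribed in the standard Sobolev space.

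Second, now that $u \in H_c^s(B)$, I would apply Lemma \ref{lemma bound pi_epsilon u - u} directly to $u$, which gives the truncation error bound $\|\pi_\epsilon u - u\|_{L^2(B)} < \epsilon^{s/2}\|u\|_{H_c^s(B)}$ measured in the customized norm. Finally, substituting the norm comparison from the first step into the right-hand side yields
\[
\|\pi_\epsilon u - u\|_{L^2(B)} < \epsilon^{s/2}\|u\|_{H_c^s(B)} \le \epsilon^{s/2}(1+c^2)^{s/2}\|u\|_{H^s(B)},
\]
which is precisely the asserted inequality.

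At this stage there is essentially no obstacle: the theorem is a clean corollary of the two lemmas, and the genuine difficulty has already been absorbed into Lemma \ref{lemma H_c^s bounded by H^s}, whose argument rests on the integration-by-parts identity for the Sturm--Liouville operator $\mathcal{D}_c$ (establishing $\|u\|_{H_c^1(B)}^2 = \langle \mathcal{D}_c u, u\rangle$ and bounding it by $(1+c^2)\|u\|_{H^1(B)}^2$) and on interpolation between the endpoints $s=1$ and $s=0$ to cover the full range $0<s<1$. The only points requiring mild care in the present write-up are to ensure the interpolation is applied consistently so that the constant $(1+c^2)^{s/2}$ carries over with the correct fractional exponent, and to confirm that the strict inequality inherited from Lemma \ref{lemma bound pi_epsilon u - u} survives composition with the (non-strict) norm comparison of Lemma \ref{lemma H_c^s bounded by H^s}, which it does since the first inequality in the chain is already strict.
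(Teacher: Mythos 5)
Your proposal is correct and follows exactly the paper's own proof: apply Lemma \ref{lemma H_c^s bounded by H^s} to place $u \in H_c^s(B)$ with the norm comparison $\|u\|_{H_c^s(B)} \le (1+c^2)^{s/2}\|u\|_{H^s(B)}$, then chain it with the truncation bound of Lemma \ref{lemma bound pi_epsilon u - u}. Your additional remarks on the strict inequality and the consistency of the interpolation constant are sound but not needed beyond what the two lemmas already provide.
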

\begin{proof}
From Lemma \ref{lemma bound pi_epsilon u - u}, it follows that
$
\|\pi_\epsilon u - u\|_{L^2(B)} < \epsilon^{s/2}  \|u\|_{H_c^s(B)}.
$
Together with Lemma \ref{lemma H_c^s bounded by H^s}, one can complete the proof by  $
\|\pi_\epsilon u - u\|_{L^2(B)} < \epsilon^{s/2}  \|u\|_{H_c^s(B)} \le  \epsilon^{s/2} (1+c^2)^{s/2}\|u\|_{H^s(B)}$.
\end{proof}

\subsection{Low-rank approximation and a priori estimate} \label{subsection: stability}
The data are inevitability corrupted by noise, this motivates us to consider noisy processed data $u_b^\delta$ such that
$$
\|u_b^\delta - u_b\|_{L^2(B)} \le \delta.
$$
For example, the full far field data $u^\infty (\hat{x};\hat{\theta};k)$ will lead to such a perturbation $u_b^\delta$.

Since the unknown contrast $q$ is related to the processed data $u_b$ via \eqref{section intro data q to u_b} and that the restricted Fourier integral operator has a low-rank structure, we are motivated to investigate a low-rank approximation of the noisy processed data given by 
\begin{equation*} 
    \pi_\epsilon u_b^\delta = \sum_{\chi_{m,n}\le 1/\epsilon} \langle u_b^\delta, \psi_{m,n,\ell} \rangle \psi_{m,n,\ell},
\end{equation*}
which leads to the low-rank approximation of the unknown $q$ given by
\begin{equation} \label{section stability formula pi_alpha q}
    \pi_{\epsilon} q^\delta = \sum_{\chi_{m,n}\le 1/\epsilon} \frac{\langle u_b^\delta, \psi_{m,n,\ell} \rangle}{\alpha_{m,n}}  \psi_{m,n,\ell}.
\end{equation}

In the following, we estimate the low-rank approximation error $q - \pi_{\epsilon} q^\delta$ under one of the following two assumptions:  assumption (A) $q \in H_c^s(B)$ or assumption (B) $q \in \mbox{span}\{ \psi_{m,n,\ell}: |\alpha_{m,n}| \ge \sigma \}$.

\begin{theorem} \label{theorem stability infinite dimension}
Assume that  $q \in H_c^s(B)$. Let $\|u_b^\delta - u_b\|_{L^2(B)} \le \delta$ for some $\delta>0$ and the low-rank approximation of the unknown be given by \eqref{section stability formula pi_alpha q} with some small regularization parameter $\epsilon>0$.  Then it holds that
\begin{equation*}
 \| q - \pi_{\epsilon} q^\delta \|_{L^2(B)} \le   \epsilon^{s/2} (1+c^2)^{s/2}\|q\|_{H^s(B)} + \frac{\delta}{\inf\{|\alpha_{m,n}|: \chi_{m,n}\le 1/\epsilon\}}
\end{equation*}
\end{theorem}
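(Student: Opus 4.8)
The plan is to split the reconstruction error into a deterministic truncation error and a noise-propagation error, exploiting the fact that the Fourier eigenvalues $\alpha_{m,n}$ play the role of singular values in the representation \eqref{section intro data q to u_b}. The starting observation is that, since $u_b = \mathcal{F}^c q$ and $\mathcal{F}^c[\psi_{m,n,\ell}] = \alpha_{m,n}\psi_{m,n,\ell}$ by \eqref{eigen_R_Fourier}, the orthonormality of the 3D PSWFs yields $\langle u_b, \psi_{m,n,\ell}\rangle = \alpha_{m,n}\langle q, \psi_{m,n,\ell}\rangle$. Consequently the noise-free version of \eqref{section stability formula pi_alpha q} collapses to the truncated PSWF expansion $\pi_\epsilon q = \sum_{\chi_{m,n}\le 1/\epsilon}\langle q,\psi_{m,n,\ell}\rangle\psi_{m,n,\ell}$, i.e. exactly the projection \eqref{section: theory approximation in H_c^s} applied to $q$. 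I would therefore write
$$
q - \pi_\epsilon q^\delta = (q - \pi_\epsilon q) + (\pi_\epsilon q - \pi_\epsilon q^\delta)
$$
and bound the two pieces separately by the triangle inequality in $L^2(B)$.

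For the first (deterministic) piece I would invoke Theorem \ref{theorem pi_epsilon u - u bound in H^s norm} directly with $u=q$, giving $\|q - \pi_\epsilon q\|_{L^2(B)} < \epsilon^{s/2}(1+c^2)^{s/2}\|q\|_{H^s(B)}$. This already accounts for the entire first summand in the claimed bound, and is the step where the Sobolev regularity enters (through the chain Lemma \ref{lemma bound pi_epsilon u - u} followed by Lemma \ref{lemma H_c^s bounded by H^s}); note that invoking Theorem \ref{theorem pi_epsilon u - u bound in H^s norm} presumes $q\in H^s(B)$ with $0<s<1$, which by Lemma \ref{lemma H_c^s bounded by H^s} also places $q$ in the customized space $H_c^s(B)$.

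For the second (noise) piece I would subtract the two finite sums term by term. Using once more $\langle u_b,\psi_{m,n,\ell}\rangle = \alpha_{m,n}\langle q,\psi_{m,n,\ell}\rangle$ to cancel the noise-free contribution, one obtains
$$
\pi_\epsilon q - \pi_\epsilon q^\delta = \sum_{\chi_{m,n}\le 1/\epsilon} \frac{\langle u_b - u_b^\delta, \psi_{m,n,\ell}\rangle}{\alpha_{m,n}}\,\psi_{m,n,\ell}.
$$
Since the $\psi_{m,n,\ell}$ are orthonormal in $L^2(B)$, Parseval turns the squared $L^2$ norm into $\sum_{\chi_{m,n}\le 1/\epsilon}\alpha_{m,n}^{-2}|\langle u_b-u_b^\delta,\psi_{m,n,\ell}\rangle|^2$. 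Pulling out the smallest-modulus eigenvalue over the retained indices and then applying Bessel's inequality, so that $\sum_{\chi_{m,n}\le 1/\epsilon}|\langle u_b-u_b^\delta,\psi_{m,n,\ell}\rangle|^2 \le \|u_b-u_b^\delta\|_{L^2(B)}^2 \le \delta^2$, gives $\|\pi_\epsilon q - \pi_\epsilon q^\delta\|_{L^2(B)} \le \delta/\inf\{|\alpha_{m,n}|:\chi_{m,n}\le 1/\epsilon\}$. Adding the two bounds completes the proof.

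The step requiring the most care is the bookkeeping around the dual indexing: the truncation set $\{\chi_{m,n}\le 1/\epsilon\}$ is dictated by the Sturm-Liouville eigenvalues, whereas the division is by the Fourier eigenvalues $\alpha_{m,n}$, and the two orderings differ. One must therefore verify that the infimum in the denominator is a strictly positive minimum rather than zero; this is guaranteed because $\chi_{m,n}\to+\infty$ renders $\{\chi_{m,n}\le 1/\epsilon\}$ a finite index set, on which every $|\alpha_{m,n}|>0$. Beyond this, the argument is the standard spectral-regularization (truncated-SVD) estimate with $\alpha_{m,n}$ as singular values, and presents no genuine analytic obstacle.
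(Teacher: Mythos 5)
Your proof is correct and follows essentially the same route as the paper's: the triangle-inequality split $q-\pi_\epsilon q^\delta=(q-\pi_\epsilon q)+(\pi_\epsilon q-\pi_\epsilon q^\delta)$, Theorem \ref{theorem pi_epsilon u - u bound in H^s norm} for the truncation term, and the spectral bound $\delta/\inf\{|\alpha_{m,n}|:\chi_{m,n}\le 1/\epsilon\}$ for the noise term. You additionally make explicit two points the paper leaves implicit — that $\langle u_b,\psi_{m,n,\ell}\rangle=\alpha_{m,n}\langle q,\psi_{m,n,\ell}\rangle$ identifies the noise-free reconstruction with the projection $\pi_\epsilon q$, and that the infimum is a positive minimum over a finite index set — which is careful bookkeeping but not a different argument.
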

\begin{proof}
Note that
 $$
 \| q - \pi_{\epsilon} q^\delta \|_{L^2(B)} \le \| q - \pi_{\epsilon} q  \|_{L^2(B)} + \| \pi_{\epsilon} q - \pi_{\epsilon} q^\delta \|_{L^2(B)},
 $$
 then we estimate each term on the right hand side as follows (where we will drop the $L^2(B)$-norm when there is no confusion).

The first term on the right hand can be estimated by Theorem \ref{theorem pi_epsilon u - u bound in H^s norm} where
\begin{eqnarray*}
    \| q - \pi_{\epsilon} q  \|^2_{L^2(B)} \le \epsilon^{s/2} (1+c^2)^{s/2}\|q\|_{H^s(B)}.
\end{eqnarray*}
The other term can be estimated by
\begin{eqnarray*}
    \| \pi_{\epsilon} q - \pi_{\epsilon} q^\delta \|   = \Big\| \sum_{\chi_{m,n}\le 1/\epsilon} \frac{\langle u_b^\delta -u_b, \psi_{m,n,\ell} \rangle}{\alpha_{m,n}}  \psi_{m,n,\ell}  \Big\| \le \frac{\|u_b^\delta -u_b\|_{L^2(B)}}{\inf\{|\alpha_{m,n}|: \chi_{m,n}\le 1/\epsilon\}} \le \frac{ \delta}{\inf\{|\alpha_{m,n}|: \chi_{m,n}\le 1/\epsilon\}} 
\end{eqnarray*}
The above three inequalities allows to prove the Theorem. This completes the proof.
\end{proof}

\begin{corollary}
    Assume that  $q \in \mbox{span}\{ \psi_{m,n,\ell}: |\alpha_{m,n}| \ge \sigma \}$. Let $\|u_b^\delta - u_b\|_{L^2(B)} \le \delta$ for some $\delta>0$ and the low-rank approximation of the unknown be given by \eqref{section stability formula pi_alpha q} with the  regularization parameter $\epsilon= 1/\sup\{ \chi_{m,n}: |\alpha_{m,n}| \ge \sigma \}$.  Then it holds that
\begin{equation*}
 \| q - \pi_{\epsilon} q^\delta \|_{L^2(B)} \le   \frac{\delta}{\sigma}.
\end{equation*}
\end{corollary}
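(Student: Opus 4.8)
The plan is to specialize the proof of Theorem~\ref{theorem stability infinite dimension}, reusing the same triangle-inequality split
$$
\| q - \pi_{\epsilon} q^\delta \|_{L^2(B)} \le \| q - \pi_{\epsilon} q \|_{L^2(B)} + \| \pi_{\epsilon} q - \pi_{\epsilon} q^\delta \|_{L^2(B)},
$$
but now showing that under assumption (B) the approximation term vanishes identically, while the noise-propagation term is sharpened from the generic $\delta/\inf\{|\alpha_{m,n}|:\chi_{m,n}\le 1/\epsilon\}$ to the clean bound $\delta/\sigma$.

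First I would dispose of the approximation term. Expanding $q$ in the 3D PSWF basis, assumption (B) says $\langle q, \psi_{m,n,\ell}\rangle = 0$ whenever $|\alpha_{m,n}| < \sigma$, so only modes with $|\alpha_{m,n}| \ge \sigma$ carry nonzero coefficients. The defining choice $1/\epsilon = \sup\{\chi_{m,n}: |\alpha_{m,n}| \ge \sigma\}$ forces every such mode to satisfy $\chi_{m,n} \le 1/\epsilon$, so the truncation $\pi_\epsilon$ retains all modes actually present in $q$; hence $\pi_\epsilon q = q$ and $\| q - \pi_\epsilon q\|_{L^2(B)} = 0$. This inclusion is immediate from the definition of the supremum and needs nothing beyond assumption (B) and the choice of $\epsilon$.

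Next I would bound the noise term exactly as in Theorem~\ref{theorem stability infinite dimension}. Using the identity $\langle u_b, \psi_{m,n,\ell}\rangle = \alpha_{m,n}\langle q, \psi_{m,n,\ell}\rangle$ — which follows from $u_b = \mathcal{F}^c q$, the eigenrelation \eqref{eigen_R_Fourier}, and the symmetry of the kernel $e^{icx\cdot y}$ together with the real-valuedness of $\psi_{m,n,\ell}$ — the difference $\pi_\epsilon q - \pi_\epsilon q^\delta$ is the sum over $\{\chi_{m,n}\le 1/\epsilon\}$ of the noise coefficients $\langle u_b^\delta - u_b, \psi_{m,n,\ell}\rangle/\alpha_{m,n}$. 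Parseval for the orthonormal system $\{\psi_{m,n,\ell}\}$ and $\|u_b^\delta - u_b\|_{L^2(B)}\le\delta$ then give $\|\pi_\epsilon q - \pi_\epsilon q^\delta\|_{L^2(B)} \le \delta/\inf\{|\alpha_{m,n}|: \chi_{m,n}\le 1/\epsilon\}$.

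The remaining and decisive step, which I expect to be the main obstacle, is to show $\inf\{|\alpha_{m,n}|: \chi_{m,n}\le 1/\epsilon\} \ge \sigma$. Concretely this amounts to the reverse of the inclusion used above, namely $\{\chi_{m,n}\le 1/\epsilon\} \subseteq \{|\alpha_{m,n}|\ge \sigma\}$: thresholding by the Sturm–Liouville eigenvalue at level $1/\epsilon$ must select exactly the same index set as thresholding the prolate eigenvalue at level $\sigma$. This is where the inverse-monotone dual relationship between the two eigenvalue families — a large prolate eigenvalue $|\alpha_{m,n}|$ corresponding to a small Sturm–Liouville eigenvalue $\chi_{m,n}$, as emphasized in Section~\ref{subsection: concept of 3D PSWFs} — must be invoked, since without it the truncation set could be strictly larger and the infimum could fall below $\sigma$. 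Granting this identification of the level sets, the infimum equals $\min\{|\alpha_{m,n}|: |\alpha_{m,n}|\ge\sigma\} \ge \sigma$, and combining the vanishing approximation term with the bounded noise term yields $\| q - \pi_{\epsilon} q^\delta \|_{L^2(B)} \le \delta/\sigma$, completing the proof.
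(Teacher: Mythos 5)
Your proposal is correct and follows essentially the same route as the paper's proof: the paper likewise observes that assumption (B) together with the choice $\epsilon = 1/\sup\{\chi_{m,n}: |\alpha_{m,n}|\ge\sigma\}$ forces $\pi_\epsilon q = q$, so that $\| q - \pi_{\epsilon} q^\delta \|_{L^2(B)} = \| \pi_{\epsilon} q - \pi_{\epsilon} q^\delta \|_{L^2(B)}$, and then simply invokes the noise-propagation estimate from Theorem \ref{theorem stability infinite dimension}. The one substantive difference is that you make explicit the step the paper leaves silent: to pass from the generic bound $\delta/\inf\{|\alpha_{m,n}|:\chi_{m,n}\le 1/\epsilon\}$ to the stated bound $\delta/\sigma$, one needs the inclusion $\{(m,n):\chi_{m,n}\le 1/\epsilon\}\subseteq\{(m,n):|\alpha_{m,n}|\ge\sigma\}$, i.e., that thresholding by the Sturm--Liouville eigenvalue at level $1/\epsilon$ picks up no mode whose prolate eigenvalue falls below $\sigma$. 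Be aware, though, that the paper never actually establishes the cross-index inverse monotonicity you appeal to: Section \ref{subsection: concept of 3D PSWFs} only states that, for each \emph{fixed} $m$, $\chi_{m,n}$ increases and $|\alpha_{m,n}|$ decreases in $n$; it says nothing about how eigenvalues with different angular indices $m$ order relative to one another, so the two threshold sets could in principle differ. The level-set identification you ``grant'' is therefore an assumption in your argument exactly as it is an implicit assumption in the paper's two-line proof; your version is the more transparent of the two, but neither one closes that step.
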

\begin{proof}
Since by assumption that $q \in \mbox{span}\{ \psi_{m,n,\ell}: |\alpha_{m,n}| \ge \sigma \}$, then with $\epsilon= 1/\sup\{ \chi_{m,n}: |\alpha_{m,n}| \ge \sigma \}$, one can derive that 
$\| q - \pi_{\epsilon} q^\delta \|_{L^2(B)} = \| \pi_{\epsilon} q - \pi_{\epsilon} q^\delta \|_{L^2(B)}$.
Following the proof in Theorem \ref{theorem stability infinite dimension}, one directly proves the Corollary. This completes the proof.
\end{proof}

\subsection{A versatile tool for regularization using $H^s_c$ penalty term} \label{subsection: Tikhonov}
The 3D PSWFs provide a versatile tool for regularization by penalizing the $H^s_c$-norm of solutions. Traditionally assuming  the a priori knowledge that $q \in H^s$, one may consider the cost functional  $\|\mathcal{F}^c q - u_b^\delta\|^2_{L^2} + \eta \|q\|^2_{H^s}$ where the penalty term is  $\eta\|q\|^2_{H^s}$ for some parameter $\eta>0$. However, noting the relation between the standard Sobolev space $H^s$ and the customized Sobolev space $H^s_c$ in Section \ref{subsection: comtomized Sobolev space}, we are motivated to consider the following cost functional 
$$
J_\eta(q) = \|\mathcal{F}^c q - u_b^\delta\|^2_{L^2} + \eta \|q\|^2_{H^s_c},
$$ 
assuming the a priori knowledge that $q \in H^s_c$. One  looks for $q^\delta_{\eta}$ that minimizes $J_\eta(q)$, i.e.,
$$
q^\delta_{\eta} = \arg \min_{q} J_\eta(q).
$$
Since the  customized Sobolev space $H_{c}^s(B)$ is conveniently given by
\begin{equation*}
    H_{c}^s(B)=\{ u \in L^2(B): \sum^{l\in\mathbb{I}(m)}_{m,n\in \mathbb{N}} \chi^s_{m,n}|\langle u, \psi_{m,n,\ell} \rangle|^2 < \infty  \},
\end{equation*}
one can reformulate the above minimization problem using the 3D PSWFs to derive  the following regularized solution 
\begin{equation} \label{section stability formula q_reg in H^s_c}
     q^\delta_{\eta} = \sum^{l\in\mathbb{I}(m)}_{m,n\in \mathbb{N}} \frac{\overline{\alpha}_{m,n}}{|\alpha_{m,n}|^2 + \eta~ \chi_{m,n}^s}  \langle u_b^\delta, \psi_{m,n,\ell} \rangle\psi_{m,n,\ell}.
\end{equation}
The difference between the low-rank regularized solution \eqref{section stability formula pi_alpha q} and the above regularized solution \eqref{section stability formula q_reg in H^s_c} is due to the additional term $\eta \chi_{m,n}^s$, and this will be a straightforward modification since the Sturm-Liouville eigenvalues $\chi_{m,n}$ have been conveniently computed according to Section \ref{subsection: computation of 3D PSWFs system}. Such convenient analytical and computational modification is again thanks to  the dual property of 3D PSWFs.  The idea of deriving a priori estimate for \eqref{section stability formula q_reg in H^s_c} is expected to be similar to the low-rank approximation and we omit it here. Numerical examples based on \eqref{section stability formula q_reg in H^s_c} will  be provided.

\subsection{A Localized imaging technique} \label{subsection: localized imaging}
The PSWFs are known to be double orthogonal \cite{Slepian64}. This double orthogonality property will allow a localized imaging technique -- imaging targeting objects inside a ball when there are possible complex objects around.  This is motivated by, for example,  fast imaging of a specific target despite the possible presence  of surrounding objects (where one avoids parameterizing the surroundings or does not know the region to parameterize in).  To be more precise, following \cite{Slepian64} or \cite{meng23data} one can use the identity \eqref{eigen_R_Fourier}
to define the 3D PSWFs in $\mathbb{R}^3$ naturally via 
\begin{equation} \label{def 3D PSWF in R3}
 \psi_{m,n,\ell}(x) = \frac{1}{\alpha_{m,n}}\int_B e^{i c x \cdot y} \psi_{m,n,\ell} (y) \ind y, \qquad x \in \mathbb{R}^3.
\end{equation}
One can check via Fourier transform and the above \eqref{def 3D PSWF in R3} to verify 
the so-called double orthogonality property (see also \cite{Slepian64})
\begin{eqnarray}
    \int_B \psi_{m,n,\ell} \overline{\psi}_{\widetilde{m}, \widetilde{n}, \widetilde{\ell}} \ind x =  \int_{\mathbb{R}^3} \psi_{m,n,\ell} \overline{\psi}_{\widetilde{m}, \widetilde{n}, \widetilde{\ell}} \ind x = 0 \qquad &\mbox{when}& \qquad  \{m,n,\ell\} \not= \{\widetilde{m}, \widetilde{n}, \widetilde{\ell}\}, \label{prop 3D PSWF double orthogonal 1}\\
        1 = \int_B \psi_{m,n,\ell} \overline{\psi}_{\widetilde{m}, \widetilde{n}, \widetilde{\ell}} \ind x = \frac{|\alpha_{m,n}|^2}{(2\pi/c)^3}  \int_{\mathbb{R}^3} \psi_{m,n,\ell} \overline{\psi}_{\widetilde{m}, \widetilde{n}, \widetilde{\ell}} \ind x  \qquad &\mbox{when}& \qquad  \{m,n,\ell\} = \{\widetilde{m}, \widetilde{n}, \widetilde{\ell}\}. \label{prop 3D PSWF double orthogonal 2}
\end{eqnarray}
The consequence of \eqref{prop 3D PSWF double orthogonal 2} is that the square energy (i.e., the square of the $L^2$ norm) of $\psi_{m,n,\ell}$ in $B$ is $\frac{|\alpha_{m,n}|^2}{(2\pi)^3}$ of the square energy  of $\psi_{m,n,\ell}$  in $\mathbb{R}^3$, which implies that: for very small $|\alpha_{m,n}| \ll 1$, the principle energy of $\psi_{m,n,\ell}$ is in $\mathbb{R}^3\backslash \overline{B}$; for dominant prolate eigenvalues $|\alpha_{m,n}|$ close to $|\alpha_{0,0}|$, the principle energy of $\psi_{m,n,\ell}$ is in $B$. Intuitively speaking, the major information of unknowns in $B$ is carried over by the 3D PSWFs with dominant prolate eigenvalues.

Having the above double orthogonality property, one can study the problem of imaging localized objects inside a ball when there are other objects around.  The Born data are still given by \eqref{section intro data q to u_b} by changing the domain of integration to $\mathbb{R}^3$ so that
\begin{equation} \label{def Born region focusing}
    u_b(p;c) = \int_{\mathbb{R}^3} e^{i c p \cdot \tilde{p}} q(\tilde{p}) {\rm d} \tilde{p}.
\end{equation}

Suppose that the unknown is given by $q=q_{\rm i} + q_{\rm o}$ where   $q_{\rm i}$ is supported in $B$ and $q_{\rm o}$ is supported in $\mathbb{R}^3 \backslash \overline{B}$.
Denote by $\langle ,  \rangle_{\mathbb{R}^3}$ the inner product in $L^2(\mathbb{R}^3)$ and $\|\cdot\|_{\mathbb{R}^3}$ the corresponding $L^2(\mathbb{R}^3)$ norm. We add subscript $B$ to indicate the inner product and norm are for the domain $B$. Equation \eqref{def Born region focusing} leads to
\begin{eqnarray*}
    \langle u_b,\psi_{m,n,\ell} \rangle_B &=& \int_B \int_{\mathbb{R}^3} e^{i c x \cdot y} (q_{\rm i}(y) + q_{{\rm o}}(y)) \psi_{m,n,\ell}(x)\ind y \ind x   \\
    &=& \int_{B} \int_B e^{icx\cdot y} q_{\rm i}(y) \ind y  \psi_{m,n,\ell}(x) \ind x + \int_{B} \int_{\mathbb{R}^3 \backslash \overline{B}} e^{icx\cdot y} q_{\rm o}(y) \ind y \psi_{m,n,\ell}(x) \ind x \\
    &=& \int_{B}  \alpha_{m,n} \psi_{m,n,\ell}(y)  q_{\rm i}(y) \ind y    +  \int_{\mathbb{R}^3 \backslash \overline{B}}  \alpha_{m,n} \psi_{m,n,\ell}(y)  q_{\rm o}(y) \ind y  
\end{eqnarray*}
and one can arrive at
\begin{equation} \label{section region focusing series expansion equation}
    \frac{1}{\alpha_{m,n}} \langle u_b,\psi_{m,n,\ell} \rangle_B  = \langle q_{\rm i},\psi_{m,n,\ell} \rangle_{\mathbb{R}^3} + \langle q_{\rm o},\psi_{m,n,\ell} \rangle_{\mathbb{R}^3},
\end{equation}
noting that $q_{\rm i}$ is supported in $B$ and $q_{\rm o}$ is supported in $\mathbb{R}^3 \backslash \overline{B}$.
When the principal energy of $\psi_{m,n,\ell}$ is in the unit ball $B$, one can expect that the last quantity $\langle q_{\rm o},\psi_{m,n,\ell} \rangle_{\mathbb{R}^3}$ in \eqref{section region focusing series expansion equation} is small so that $\frac{1}{\alpha_{m,n}} \langle u_b,\psi_{m,n,\ell} \rangle_B$ gives an approximation of $\langle q_{\rm i},\psi_{m,n,\ell} \rangle_{\mathbb{R}^3} = \langle q_{\rm i},\psi_{m,n,\ell} \rangle_{B}$. This leads to the  approximation
$$
q_{\rm i}^\sigma(x) =  \sum_{|\alpha_{m,n}|>\sigma} \frac{1}{\alpha_{m,n}} \langle u_b,\psi_{m,n,\ell} \rangle_{B} ~ \psi_{m,n,\ell}(x)
$$
where $\sigma$ is chosen close to the dominant prolate eigenvalues. We will demonstrate this property in the numerical study with $\sigma = 0.9 |\alpha_{0,0}|$.

\section{Computational framework} \label{section: computation}
In this section we develop the computational framework for computing the low-rank approximation of the inverse solution. The computation of the 3D PSWFs will be based on the dual property where the 3D PSWFs are both eigenvalues of the restricted Fourier integral operator and the Sturm-Liouville differential operator. The low-rank approximation will be derived by processing the far field data and projecting the processed data onto a low-rank space.
\subsection{Computation of 3D PSWFs and prolate eigenvalues} \label{subsection: computation of 3D PSWFs system}

The 3D PSWF  $\psi_{m,n,\ell} (x;c)$ can be represented by the following series of  ball polynomial (cf. \cite{Slepian64}, \cite{ZLWZ20})
\begin{equation}\label{eq: expansion}
    \psi_{m,n,\ell}(x;c)=\sum_{j=0}^{\infty}\beta_j^{m,n}(c)P_{m,j,\ell}(x),\quad x\in B,
\end{equation}
where the ball polynomial is defined by $P_{m,j,\ell}(x)=r^m P_j^{(m)}(2r^2-1)Y_{m,\ell}(\hat{x})$. Here $Y_{m,l}(\hat{x})$  represents the (real-valued) spherical harmonics and  $P_n^{(m)} $ represents the normalized Jacobi polynomial, whose definitions are given below.

\textbf{Spherical harmonics}. The complex spherical harmonics can be found in \cite{colton2012inverse}, which can be used to derive the following real-valued spherical harmonic $Y_{m,\ell}(\hat{x})$ given by
\begin{align}\label{spherical_harmonic}
    Y_{m,\ell}(\hat{x})=\left\{
            \begin{array}{cc}
            \sqrt{\frac{2m+1}{4\pi}} P_m(\cos\theta), & \ell=0 \\
                \sqrt{\frac{2m+1}{2\pi}\frac{(m-|\ell|)!}{(m+|\ell|)!}} P_m^{\ell}(\cos\theta)\cos(\ell\phi), & \ell=1,2,\cdots,m\\
                \sqrt{\frac{2m+1}{2\pi}\frac{(m-|\ell|)!}{(m+|\ell|)!}} P_m^{|\ell|}(\cos\theta)\sin(\ell\phi),& \ell=-1,-2,\cdots,-m
            \end{array}\right. ,
\end{align}
where $P^{\ell}_m$ is the associated Legendre function given by
$$
P^{\ell}_m(r) = (1-r^2)^{\ell/2}\frac{d^\ell}{dx^\ell}P_m(r), \quad -1<r<1,  \quad 0\le \ell \le m
$$
with $P_m$ being the Legendre polynomial of order $m$, c.f., \cite{Abramowitz64,colton2012inverse}. Such
spherical harmonic $Y_{m,\ell}(\hat{x})$ is the eigenfunction of the Laplace-Beltrami operator, i.e., 
$$
\Delta_{\mathbb{S}^2} Y_{m,\ell}(\hat{x})=-m(m+1) Y_{m,\ell}(\hat{x}),
$$
and satisfies the orthonormal condition
$$\langle Y_{m,\ell},Y_{\tilde{m},\tilde{\ell}}\rangle_{L^2(\mathbb{S}^2)}=\delta_{m,\tilde{m}}\delta_{\ell,\tilde{\ell}}.$$
More details about  spherical harmonics can be found in \cite{Abramowitz64,colton2012inverse}.

\textbf{Jacobi polynomials}.
The normalized Jacobi polynomials $\{P^{(m)}_n(\eta)\}_{\eta\in (-1,1) }^{n\in\mathbb{N}}$ mentioned above  are eigenfunctions of the following Sturm-Liouville problem,
\begin{equation}\label{jacobi polynomial}
    -\frac{1}{w_{m+1/2}(\eta)}\partial_{\eta} \left( (1-\eta)w_{m+3/2}(\eta) \partial_{\eta} P^{(m)}_n(\eta) \right)=n(n+m+3/2)P^{(m)}_n(\eta), \quad -1<\eta<1
\end{equation}
where $w_{m+1/2}(\eta)= (1+\eta)^{m+1/2}$, and they satisfy the orthogonality condition and are normalized by
\begin{equation}\label{orthogonal_JacobiPolynomial}
    \int_{-1}^1  w_{m+1/2}(\eta) P_n^{(m)}(\eta)P_{\tilde{n}}^{(m)}(\eta)d\eta=2^{m+5/2}\delta_{n\tilde{n}}, \quad \forall n,\tilde{n} \in \mathbb{N}.
\end{equation}
The normalized Jacobi polynomials $\{P_n^{(m)}(\eta)\}_{\eta\in (-1,1) }$ satisfy the following three-term recurrence relation
\begin{align*}
         P^{(m)}_{n+1}(\eta)=\frac{ (x-b_n) P_n^{(m)}(\eta)-a_{n -1}P^{(m)}_{n-1}(\eta) }{a_n},\quad n\geq 1
\end{align*}
with the first two terms given by
$$
    P^{(m)}_{0}(\eta)=\frac{1}{h_0 },\qquad P^{(m)}_{1}(\eta)=\frac{(m+5/2)\eta-m-1/2}{2h_1},
$$
here $a_n$, $b_n$, and $h_n$ are given by 
\begin{align}\label{jacobi polynomial recurrence}
   \left\{
            \begin{array}{cc}
                a_{n}=&\frac{2(n+1)(n+m+3/2)}{(2 n+m+5/2)\sqrt{(2 n+m+3/2)(2 n+m+7/2)}} \\
                b_{n}=&\frac{(m+1/2)^{2}}{(2 n+m+1/2)(2 n+m+5/2)} \\
               h_{n}=& \frac{1}{\sqrt{2(2 n+m+3/2)}}
            \end{array}\right. , \qquad n\in\mathbb{N}.
\end{align}
For a more comprehensive introduction to special polynomials, we refer to \cite{Abramowitz64}. From the definition of the normalized Jacobi polynomial and the spherical harmonics, it is seen that $\| P_{m,j,l} \|_{L^2(B)}=1$.

\textbf{Computation of 3D PSWFs system}.
To compute $\{\beta_j^{m,n}\}_{j\in\mathbb{N}}$,  we plug the expansion \eqref{eq: expansion}  into \eqref{eq: sturm-liouvill}  and we can derive that the  coefficients  satisfy
\begin{eqnarray} \label{tridiagonal_linear_system}
       \Big(\gamma_{m+2j}+\frac{(1+b_j)c^2}{2}-\chi_{m,n}(c) \Big) \beta_j^{m,n}   +\frac{a_{j-1}c^2}{2}\beta_{j-1}^{m,n}+\frac{a_{j}c^2}{2}\beta_{j+1}^{m,n}=0, \quad j\geq 0
    \end{eqnarray}
where $\gamma_{m+2j}=(m+2j)(m+2j+3)$. Here $\chi_{m,n}(c)$  is  the Sturm-Liouville eigenvalue. This is an infinite linear system. 
In practice,
one can start with the approximation 
\begin{equation} \label{section: computing 3D PSWFs: polynomial expansion}
    \tilde{\psi}_{m,n,l}(x ; c)=\sum_{j=0}^{K} \tilde{\beta}_{j}^{m,n} P_{m,j,l}(x), \quad 2 n+m \leq N,
\end{equation}
 so that the coefficients  $\{\tilde{\beta}_{j}^{m,n}\}$  can be solved from the following tridiagonal linear system
\begin{equation}\label{eigensystem}
{A}{\tilde{\beta}^{m,n}}=\tilde{\chi}_{m,n}{\tilde{\beta}^{m,n}}
\end{equation}
with eigenvalues $\tilde{\chi}_{m,n}$,
where ${\tilde{\beta}^{m,n}}=(\tilde{\beta}_{0}^{m,n},\tilde{\beta}_{1}^{m,n},...,\tilde{\beta}_{K}^{m,n})^T$, ${A}$ is a $(K+1)\times(K+1)$ symmetric tridiagonal matrix whose nonzero entries are given by
\begin{eqnarray*}
    A_{j,j} =\gamma_{m+2j}+\frac{(1+b_j)c^2}{2}
    \quad  \mbox{and} \quad A_{j,j+1} =A_{j+1,j}=\frac{a_{j}c^2}{2}, \quad j\geq 0.
\end{eqnarray*}
Since the 3D PSWFs are  normalized to have unit $L^2(B)$-norm,  the coefficient vector ${\tilde{\beta}^{m,n}} $ are set to satisfy 
\begin{equation*}
    \| {\tilde{\beta}^{m,n}}\|_2= \Big(\sum_{j=0}^{K} |\tilde{\beta}_{j}^{m,n}|^2 \Big)^{1/2}=1.
\end{equation*}
The eigenvalues are sorted according to
\begin{equation*}
    \tilde{\chi}_{m,j}<\tilde{\chi}_{m,i}, \quad j<i.
\end{equation*}
In practive, the number $K$ of truncated terms in \eqref{section: computing 3D PSWFs: polynomial expansion} can be chosen as $K=\left\lceil\frac{\tilde{M}-m}{2}\right\rceil$, c.f., \cite{ZLWZ20}.

Finally to evaluate the prolate eigenvalues $\{\alpha_{m,n}\}_{m,n\in\mathbb{N}}$, the following formula was given in \cite{ZLWZ20}
\begin{equation}\label{eigenvalues}
   \tilde{\alpha}_{m,n}(c)=\frac{\pi^{3/2} (ic)^m }{2^{m-\frac{1}{2}}\sqrt{\Gamma(m+3/2)\Gamma(m+5/2)}}\frac{\tilde{\beta}_0^{m,n}}{\tilde{\varphi}_{m,n}(-1;c)}
\end{equation}
where $\tilde{\varphi}_{m,n}(-1;c)=\sum_{j=0}^{K} \tilde{\beta}^{m,n}_j P_j^{(m)}(-1)$ and note that $i$ is the imaginary unit here.  This formula with  $N=K=150$  is sufficient for our inverse problem.

We summarize the following algorithm for the evaluation of PSWFs and prolate eigenvalues.  
\begin{algorithm} 
	\caption{Evaluation of  3D PSWF and prolate eigenvalue}
\begin{algorithmic}[1]\label{Algorithm: PSWFs}
\Require  Parameter $c$, index of 3D PSWF $(m,n,l)$ and $x\in\mathbb{R}^2$. 
\Ensure  $\psi_{m,n,l}(x;c)$ and prolate eigenvalue $\alpha_{m,n}(c)$.
\State Compute the tridiagonal linear system \eqref{eigensystem} (and sort the eigenvalues from small to large) to obtain the eigenvector $\tilde{\beta}^{m,n}$ and the eigenvalue $\tilde{\chi}_{m,n}(c)$ corresponding to the given $(m,n)$. Here ${\tilde{\beta}^{m,n}}=(\tilde{\beta}_{0}^{m,n},\tilde{\beta}_{1}^{m,n},...,\tilde{\beta}_{K}^{m,n})^T$.  
\State  Evaluate $(P_{m,0}(2\|x\|^2-1),P_{m,1}(2\|x\|^2-1),\cdots,P_{m,K}(2\|x\|^2-1) )^T$ at $x\in\mathbb{R}^3$ by the recurrence \eqref{jacobi polynomial recurrence} and the spherical harmonics $Y_{m,l}(\hat{x})$ at $\hat{x}$  by \eqref{spherical_harmonic}.
\State Compute the approximation of 3D PSWFs by $\tilde{\psi}_{m,n,l}(x;c) = \|x  \|^mY_{m,l}(\hat{x})\sum_{j=0}^K   \tilde{\beta}_{j}^{m,n} P_j^{(m)}(2\|x\|^2-1)$.
\State Evaluate the prolate eigenvalue $\tilde{\alpha}_{m,n}(c)$ according to \eqref{eigenvalues} .
\end{algorithmic}
\end{algorithm} 
\subsection{Computation of inverse solution in the low-rank space} \label{subsection: numerical low-rank solution}
To evaluate the low-rank approximation \eqref{section stability formula pi_alpha q}, one needs to evaluate the projection $\langle u_b^\delta, \psi_{m,n,\ell} \rangle$. Here we follow the Gaussian product quadrature where the quadrature points are given by 
$$
\mathbb{P} :=\left\{ \sqrt{(1+t_i)/2}
\left(\begin{matrix}
  \sin\theta_s\cos\phi_j    \\
    \sin\theta_s\sin\phi_j\\
    \cos\theta_s
\end{matrix}
\right): \quad 
\begin{matrix}
  i = 0,1,\cdots, T-1    \\
    s = 0,1,\cdots, M_{\theta}-1 \\
    j = 0,1,\cdots, M_{\phi}-1
\end{matrix}
\right\} =\{p_n: n = 1,2,\cdots, TM_{\theta} M_{\phi}\},
$$
here each $p_n$ is uniquely associated with an index $(i,s,j)$, and $\{t_i, \theta_s, \phi_j\}_{i=0,s=0,j=0}^{T-1,M_{\theta}-1, M_{\phi}-1}$ are chosen according to
\begin{eqnarray}  
\left\{t_i,\omega_{t_i}\right\}_{i=0}^{T-1},\left\{\cos\theta_s,\omega_{\theta_s}\right\}_{s=0}^{M_\theta-1}: &&\mbox{Gauss-Legendre quadrature nodes and weights},    \label{quadrature: GL} \\
\left\{\phi_j=\frac{2j\pi}{M_\phi},\omega_{\phi_j}=\frac{2\pi}{M_\phi}\right\}_{j=0}^{M_\phi-1}: &&\mbox{trapezoidal quadrature nodes and weights}. \label{quadrature: Trapezoidal} 
\end{eqnarray}
With such a quadrature, the projection is evaluated by
\begin{equation*}
    \begin{aligned}
        \langle u_b^\delta, \psi_{m,n,\ell} \rangle &\approx \frac{\pi}{2\sqrt{2}M_\phi}\sum_{i=0}^{T-1}\sum_{j=0}^{M_\phi-1}\sum_{s=0}^{M_\theta-1} u_b^\delta(p_n;c)\psi_{m,n,l}(t_i,\phi_j,\theta_s) (1+t_i)^{1/2} w_{t_i}w_{\theta_s},
    \end{aligned}
\end{equation*}
where again $p_n$ is uniquely associated with an index $(i,s,j)$.

The data
$\{u_b^\delta(p_n;c): p_n \in \mathbb{P},~ n = 1,2,\cdots, TM_\phi M_\theta\}$ will be determined by  the   full or   Born  noisy far field data 
via the relation \eqref{section intro Born processed data} 
in the continuous case. In practice when the numbers of observation and incident directions are finite, we look for approximate or mock-quadrature nodes $\{\tilde{p}_n\}_{n=1}^{T\times M_\phi\times M_\theta}$ given by
\begin{equation}\label{eq: exctraction}
\tilde{p}_n=\frac{\hat{\theta}_{\ell^*}-\hat{x}_{j^*}}{2},~\mbox{where } (\ell^*,j^*)=\underset{1\leq j\leq N_1,~1\leq \ell\leq N_2}{\rm argmin}\Big\|p_n-\frac{\hat{\theta}_{\ell}-\hat{x}_{j}}{2}\Big\|_2,
\end{equation}
and we approximate the \textit{processed data} set $\{u_b^\delta(p_n;c): p_n \in P, n = 1,2,\cdots, TM_\phi M_\theta\}$ by 
\begin{equation}\label{eq: processed data}
\{u_b^\delta(\tilde{p}_n;c)=\frac{4\pi}{k^2}u^{\infty,\delta}(\hat{x}_{j^*};\hat{\theta}_{\ell^*};k):\tilde{p}_n=\frac{\hat{\theta}_{\ell^*}-\hat{x}_{j^*}}{2} ,1\leq n\leq T M_\phi  M_\theta\}.
\end{equation}
If the far field data set is of large scale such as when $N_1\times  N_2 \ge T\times M_\phi\times M_\theta$,  one can process the  far field data set of dimension $N_1\times  N_2$  to get a smaller processed data set through the relation \eqref{eq: exctraction}--\eqref{eq: processed data};  Otherwise if the far field data are limited, one can extend the data set through nearest interpolation for simplicity following by applying the relation \eqref{eq: exctraction}--\eqref{eq: processed data}. It is worth mentioning other options to use specific quadrature such as \cite{greengard2024generalized}, which is a possible future direction.

As a result, the numerical low-rank approximation is given by
\begin{equation*}
    \begin{aligned}
        q^{\sigma,\delta} &= \sum_{|\alpha_{m,n}|>\sigma}\frac{1}{\alpha_{m,n}}  \Big(\frac{\pi}{2\sqrt{2}M_\phi}\sum_{i=0}^{T-1}\sum_{j=0}^{M_\phi-1}\sum_{s=0}^{M_\theta-1} u_b^\delta(\tilde{p}_n;c)\psi_{m,n,l}(t_i,\phi_j,\theta_s) (1+t_i)^{1/2} w_{t_i}w_{\theta_s} \Big)\psi_{m,n,\ell},
    \end{aligned}
\end{equation*}
where each $\tilde{p}_n$ is uniquely associated with an index $(i,s,j)$.
We summarize the low-rank computational framework, from data entry to output, by   \Cref{Algorithm: LowRankIP}.
\begin{algorithm} 
	\caption{Low rank solution for inverse scattering problem}
\begin{algorithmic}[1] \label{Algorithm: LowRankIP}
\Require Wave number $k$, noisy full  far field data 
$\{u^{\infty,\delta}(\hat{x}_j;\hat{\theta}_\ell;k):j=1,2\cdots,N_1,\ell=1,2\cdots,N_2\}$,
or noisy Born far field pattern data 
$\{u_b^{\infty,\delta}(\hat{x}_j;\hat{\theta}_\ell;k):j=1,2\cdots,N_1,\ell=1,2\cdots,N_2\}$.
\Ensure  The  low-rank approximation $q^{\sigma,\delta}$ of the unknown.
\State Set $c=2k$ and spectral cutoff regularization parameter $\sigma$. Set $\mathcal{J}_\sigma =\{(m,n,\ell): |\alpha_{m,n}|>\sigma\}$.
\State (Precomputing) Apply \Cref{Algorithm: PSWFs} to evaluate the 3D PSWFs at quadrature nodes, i.e., 
$\{\psi_{m,n,l}(p_n;c):(m,n,l)\in\mathcal{J}_\sigma,1\leq n\leq T M_\phi M_\theta\}$ 
and the prolate eigenvalues
$\{\alpha_{m,n}(c):(m,n,0)\in\mathcal{J}_\sigma\}$.
\State Apply \eqref{eq: exctraction}--\eqref{eq: processed data} to obtain the  processed data $\{u_b^\delta(\tilde{p}_n;c)\}_{n=1}^{T  M_\phi  M_\theta}$.
\State Calculate the projection of processed data on the low-rank space   via \eqref{quadrature: GL}--\eqref{quadrature: Trapezoidal} and
\begin{equation*}
    \begin{aligned}
        u^{\sigma,\delta}_{m,n,l}&\approx \frac{\pi}{2\sqrt{2}M_\phi}\sum_{s=0}^{M_\theta-1} \sum_{i=0}^{T-1}\sum_{j=0}^{M_\phi-1} u_b^\delta(\tilde{p}_n;c)\psi_{m,n,l}(t_i,\phi_j,\theta_s) (1+t_i)^{1/2} w_{t_i}w_{\theta_s}, \qquad (m,n,l)\in\mathcal{J}_\sigma.
    \end{aligned}
\end{equation*}
\State Evaluate $q^{\sigma,\delta}_{m,n,l}= u^{\sigma,\delta}_{m,n,l}/\alpha_{m,n}$ to obtain the low-rank approximation 
$$q^{\sigma,\delta}(x;c)=\sum_{(m,n,l)\in\mathcal{J}_\sigma}q^{\sigma,\delta}_{m,n,l}\psi_{m,n,l}(x;c).$$
\end{algorithmic}
\end{algorithm}

\section{Numerical experiments} \label{section: numerical}
In this section, we provide various numerical examples using both  the Born far field data \eqref{born far field} and the full far field data \eqref{full far field}. We will illustrate the potential of the proposed method via studies of imaging resolution, robustness, comparison with an iterative method, regularization using customized penalty, and localized imaging.

\subsection{Data generation}
\subsubsection{Born processed data} \label{section numeric: born processed data}
Let us start with the simple case of Born processed data. One can generate the Born processed data by directly calculating the restricted Fourier integral \eqref{section intro data q to u_b} with the following given contrasts $q$. 
\begin{itemize}
\item Constant in a ball $q(x)=1_{x\in \Omega}(x)$, $\Omega=\{x\in \mathbb{R}^3: \|x\|_2<a\},~a<1$. This leads to 
\begin{equation*}
    u_b(x;c)=\left(\frac{2a\pi}{c\|x\|}\right) ^{3/2}J_{3/2}(ac\|x\|),
\end{equation*}
where $J_{3/2}$ is Bessel function of the first kind of order $\nu=3/2$, see \cite{Abramowitz64}. In the following numerical experiments, we set $a=0.5$.
\item Constant in a cube $q(x)=1_{x\in \Omega}(x)$, $\Omega=\{x\in \mathbb{R}^3:a_j\leq x_j\leq b_j,j=1,2,3\}\subset B(0,1)$. This leads to 
\begin{equation*}
    u_b(x;c)=\prod_{j=1}^3 \frac{1}{ic x_j}(e^{ic x_j\cdot b_j}-e^{ic x_j\cdot a_j}).
\end{equation*}
We test $a_j=-1/2,~b_j=1/2$ in  our numerical experiment.

Furthermore, we will test constant contrast supported in three nearby cubes: $
q=1_{\cup_{j=1}^3 \Omega_j}$ given by
\begin{equation}\label{eq: Three cubes}
    \left\{\begin{aligned}
        \Omega_1&:=\{(x_1,x_2,x_3)\in\mathbb{R}^3: |x_1|\leq 0.3,-0.5 < x_2 <  -0.025,0.1 < x_3 <  0.5\},\\
        \Omega_2&:=\{(x_1,x_2,x_3)\in\mathbb{R}^3: |x_1|\leq 0.3,0.025 <  x_2 <  0.5,0.1 <  x_3 <  0.5\},\\
        \Omega_3&:=\{(x_1,x_2,x_3)\in\mathbb{R}^3: |x_1|\leq 0.3,-0.235 <  x_2 <  0.235,-0.5 <  x_3 <  0.025\}
    \end{aligned}\right..
\end{equation}

\item  Oscillatory contrast  $q(x_1,x_2,x_3)= \sin (m\pi x_1) \cdot 1_\Omega(x)$, $\Omega=\{x\in \mathbb{R}^3:  |x_j| < 1/2,j=1,2,3\}$, $m\in \mathbb{Z}$.
This leads to
\begin{equation*}
    u_b(x;c)= -8i\left(\frac{\sin(cx_1+m\pi)/2}{cx_1+m\pi}-\frac{\sin(cx_1-m\pi)/{2}}{cx_1-m\pi} \right)\prod_{j=2}^3\frac{\sin (cx_j/2)}{cx_j} .
\end{equation*}
In the following numerical experiments, we test $m=8$.
\end{itemize}

To test robustness of the proposed method,   randomly distributed noise are added to the processed Born data $u_b$ to generate the noisy data
\begin{equation}\label{eq: add noise}
    u_b^{\tilde{\delta}}(p_n;c)=u_b(p_n;c)(1+ \delta \xi_n),~p_n \in \mathbb{P},
\end{equation}
where the set $\mathbb{P}$ is given in \Cref{subsection: numerical low-rank solution},
  $\xi_n\in [-1,1]$ is a uniformly distributed random number and $\delta\in [0,1)$ is referred to as the noise level. It follows that $\|u_b^{\tilde{\delta}} - u_b\| \le  \delta \| u_b \| = \tilde{\delta}$.
  In the following, the regularization parameter is chosen as $\sigma=\delta \alpha_{0,0}(c)$ when $\delta>0$ and $\sigma=0.1|\alpha_{0,0}(c)|$ when $\delta=0$.

\begin{figure}[htbp]  
\centering  
\subfloat[]{ \includegraphics[width=0.32\linewidth]{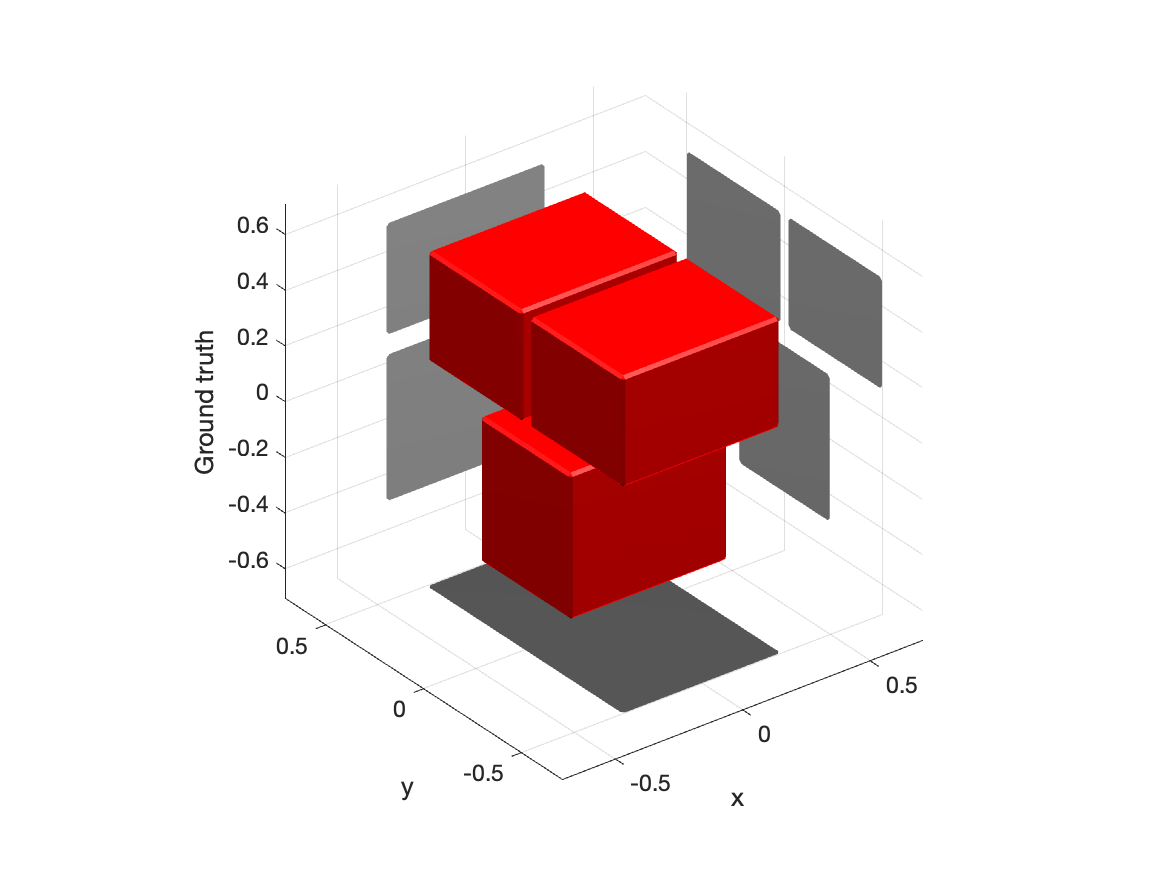} }
\subfloat[]{ \includegraphics[width=0.32\linewidth]{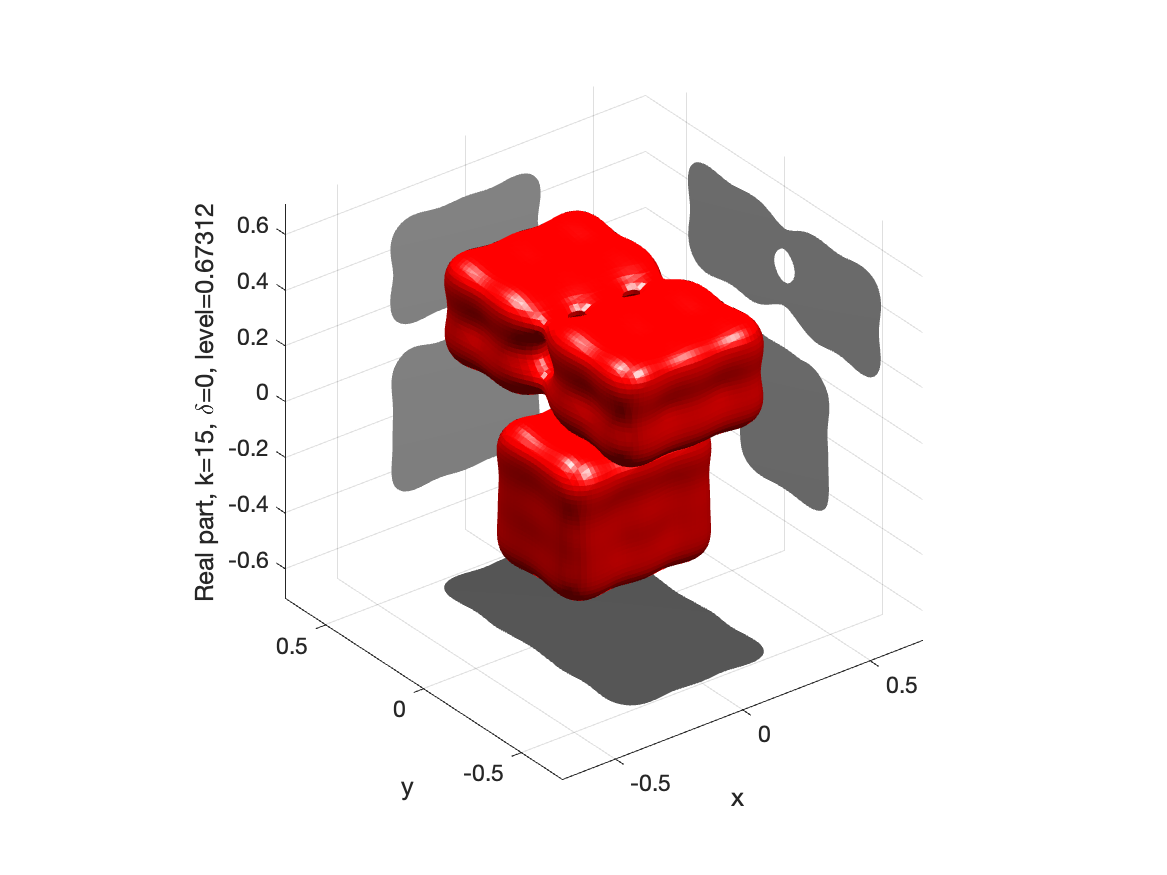} }
\subfloat[]{ \includegraphics[width=0.32\linewidth]{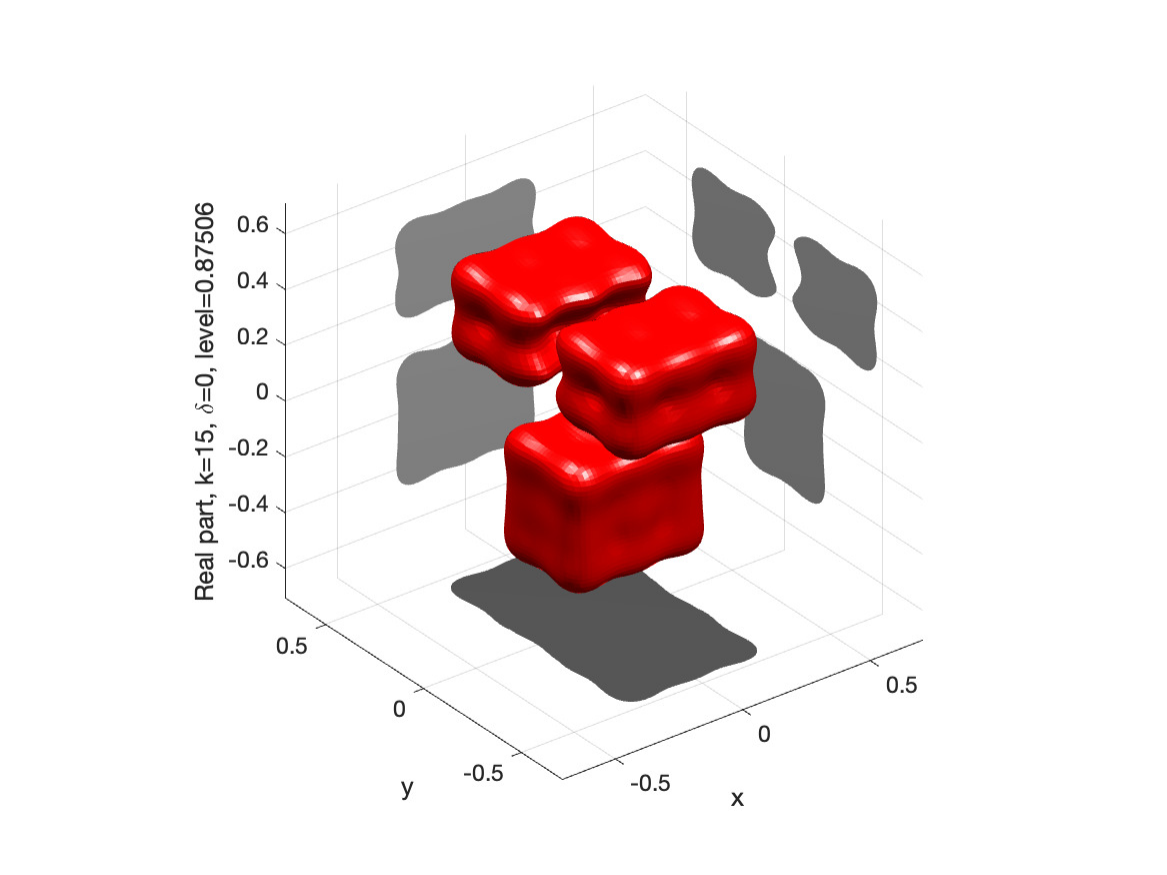} }\\
\subfloat[]{ \includegraphics[width=0.32\linewidth]{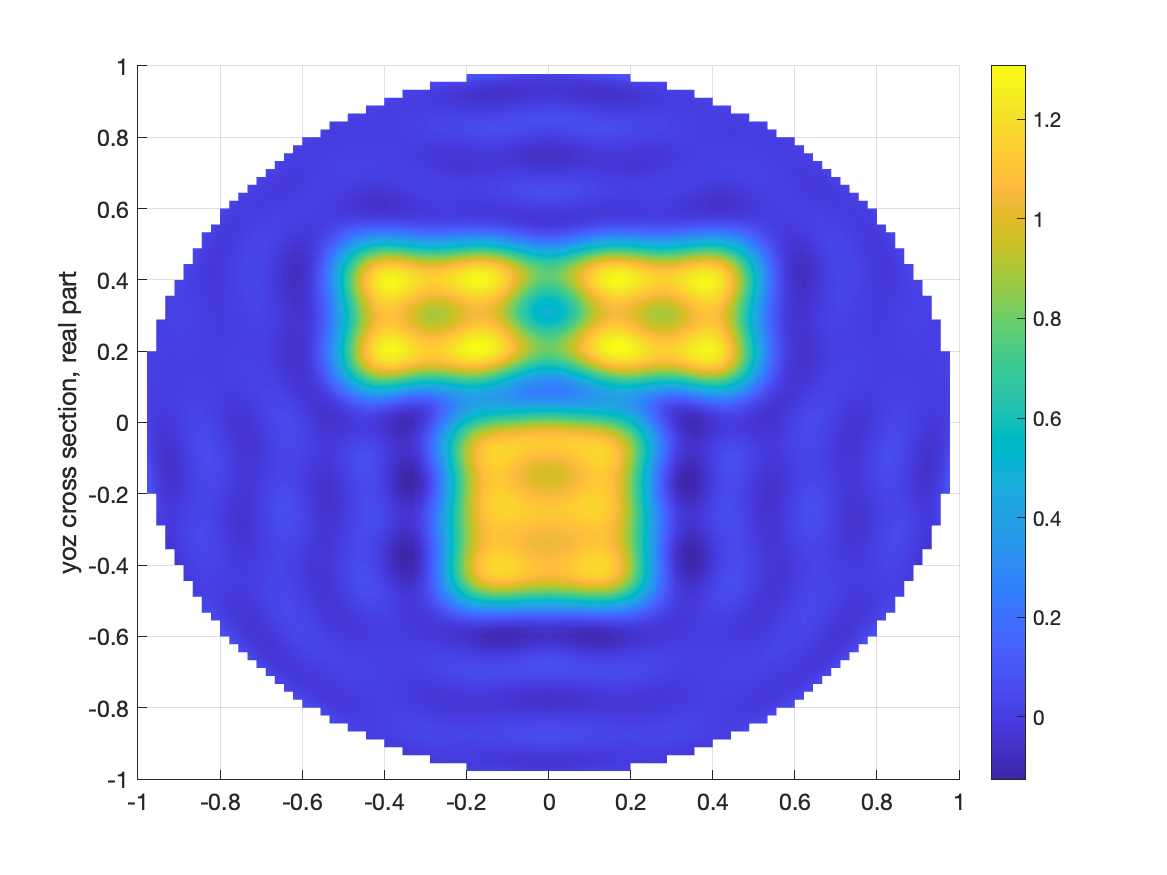} }
\subfloat[]{ \includegraphics[width=0.32\linewidth]{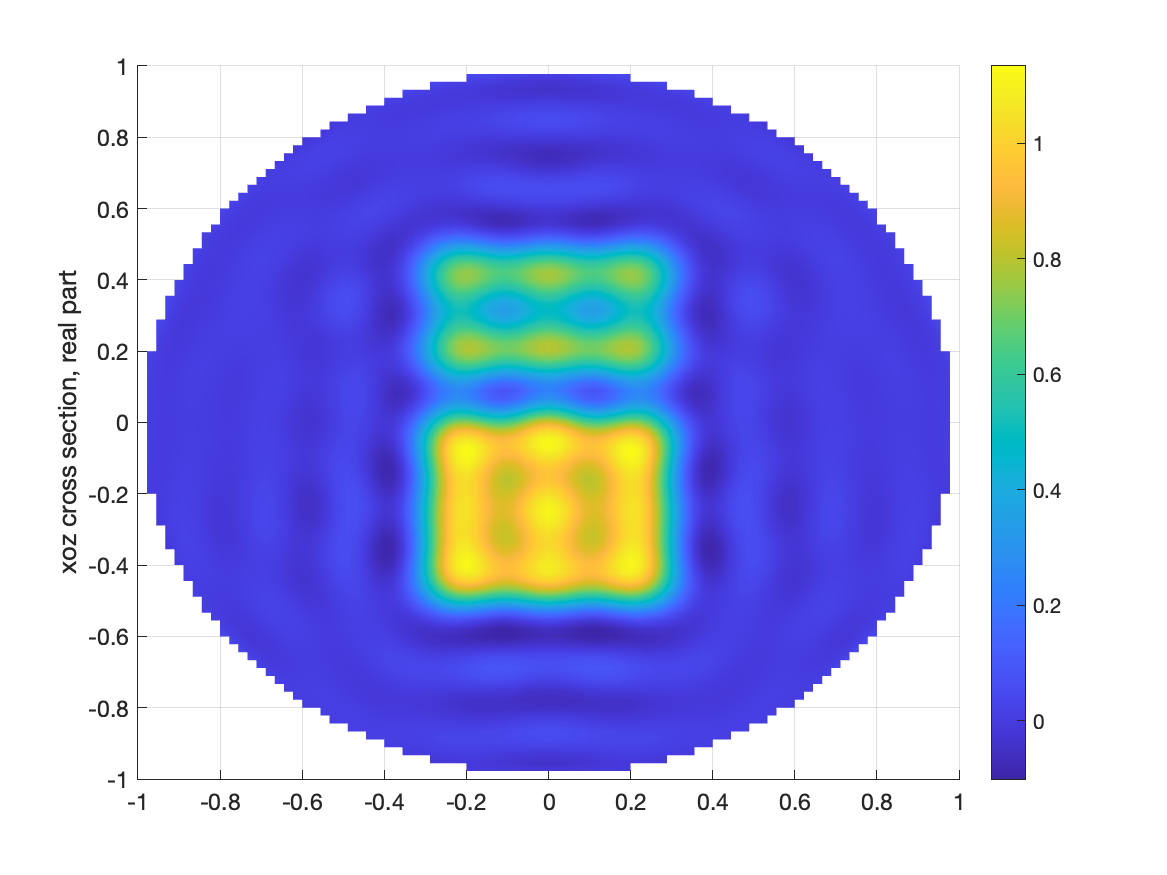} }
\subfloat[]{ \includegraphics[width=0.32\linewidth]{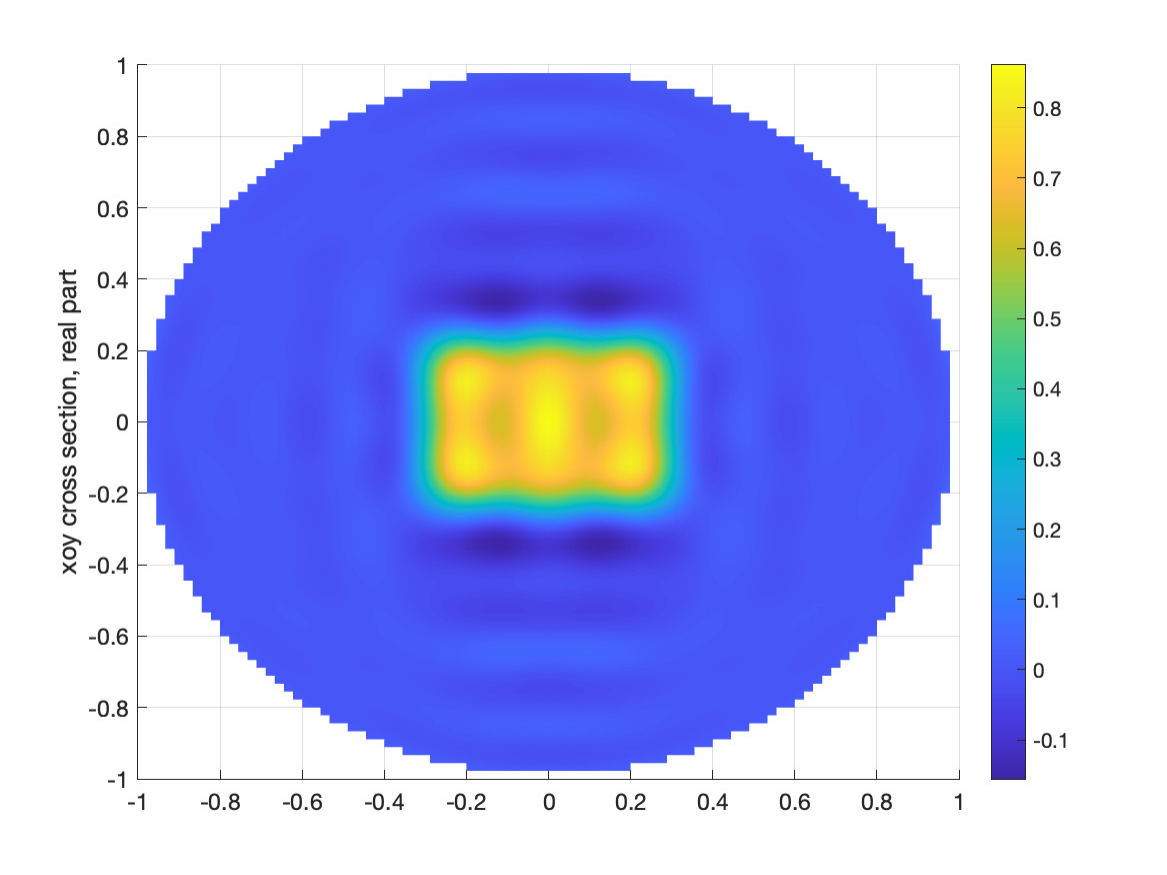} }\\
\subfloat[]{ \includegraphics[width=0.32\linewidth]{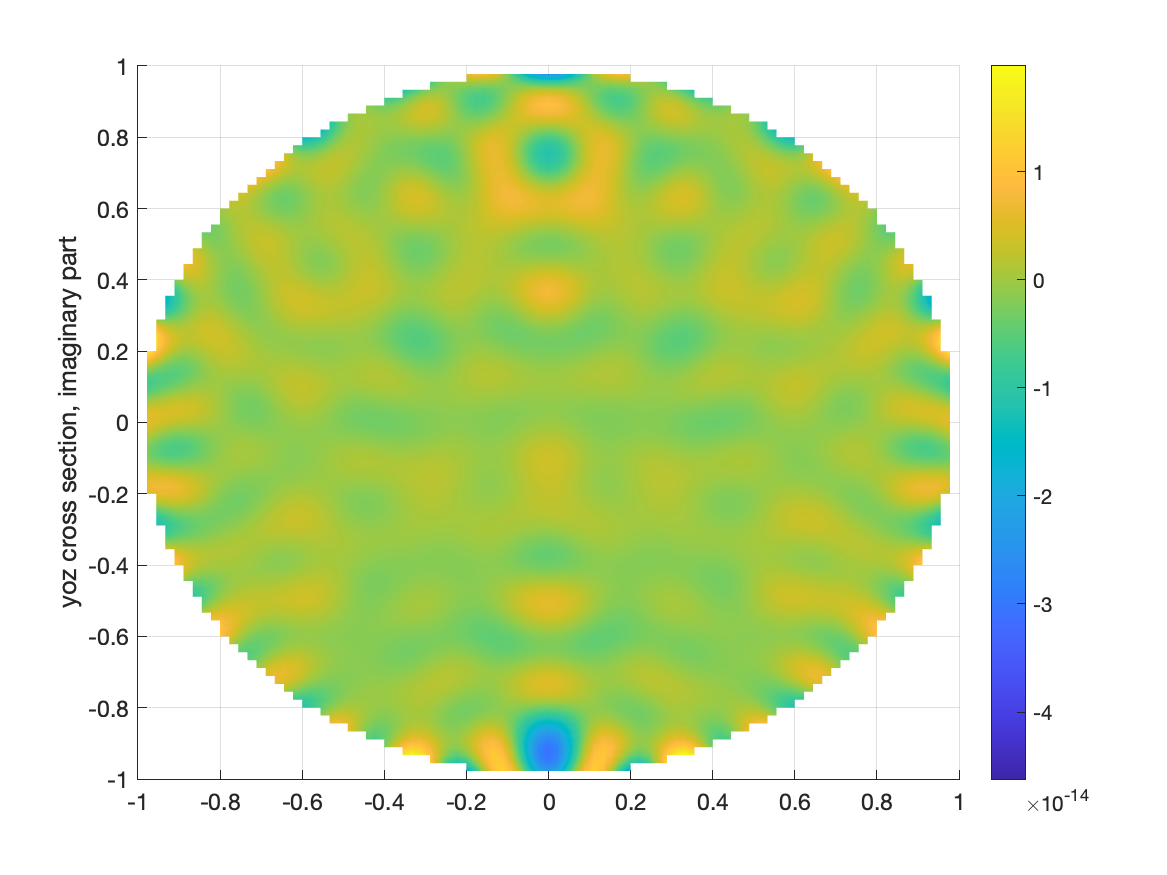} }
\subfloat[]{ \includegraphics[width=0.32\linewidth]{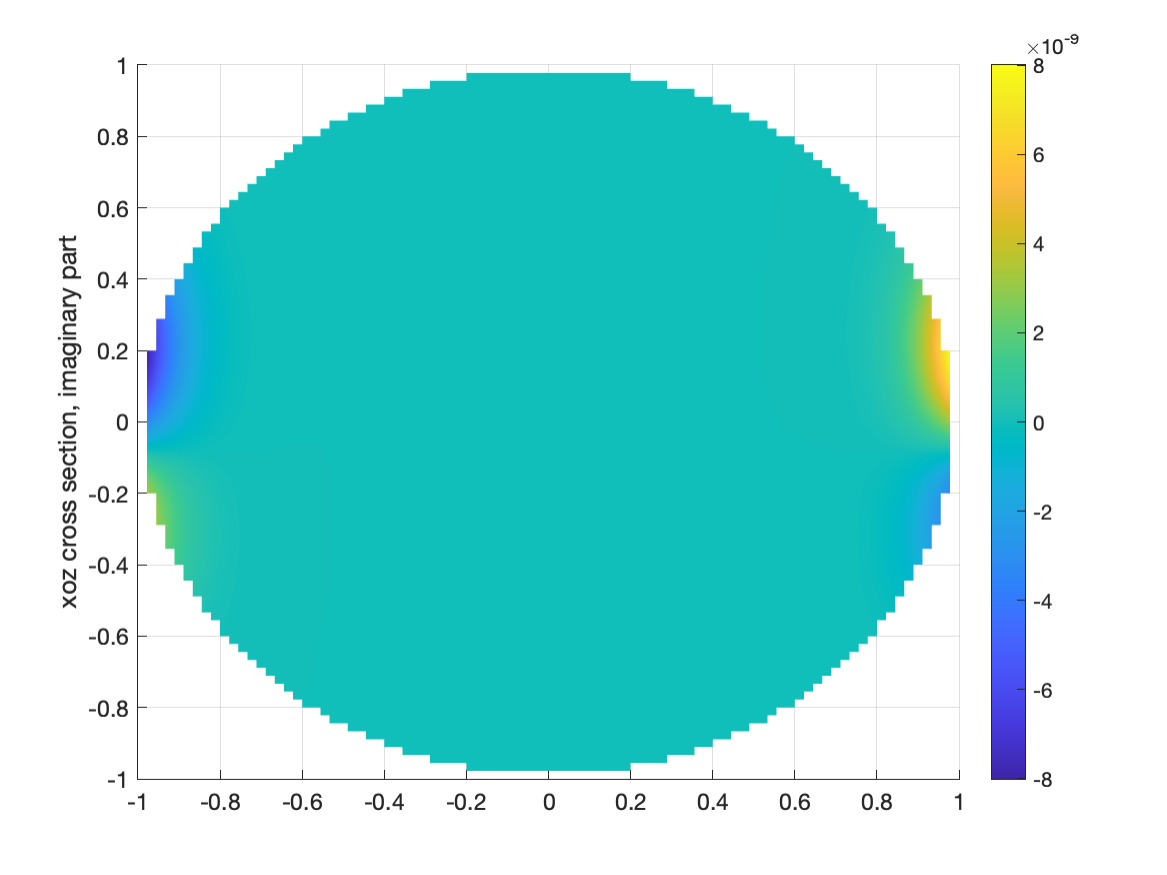} }
\subfloat[]{ \includegraphics[width=0.32\linewidth]{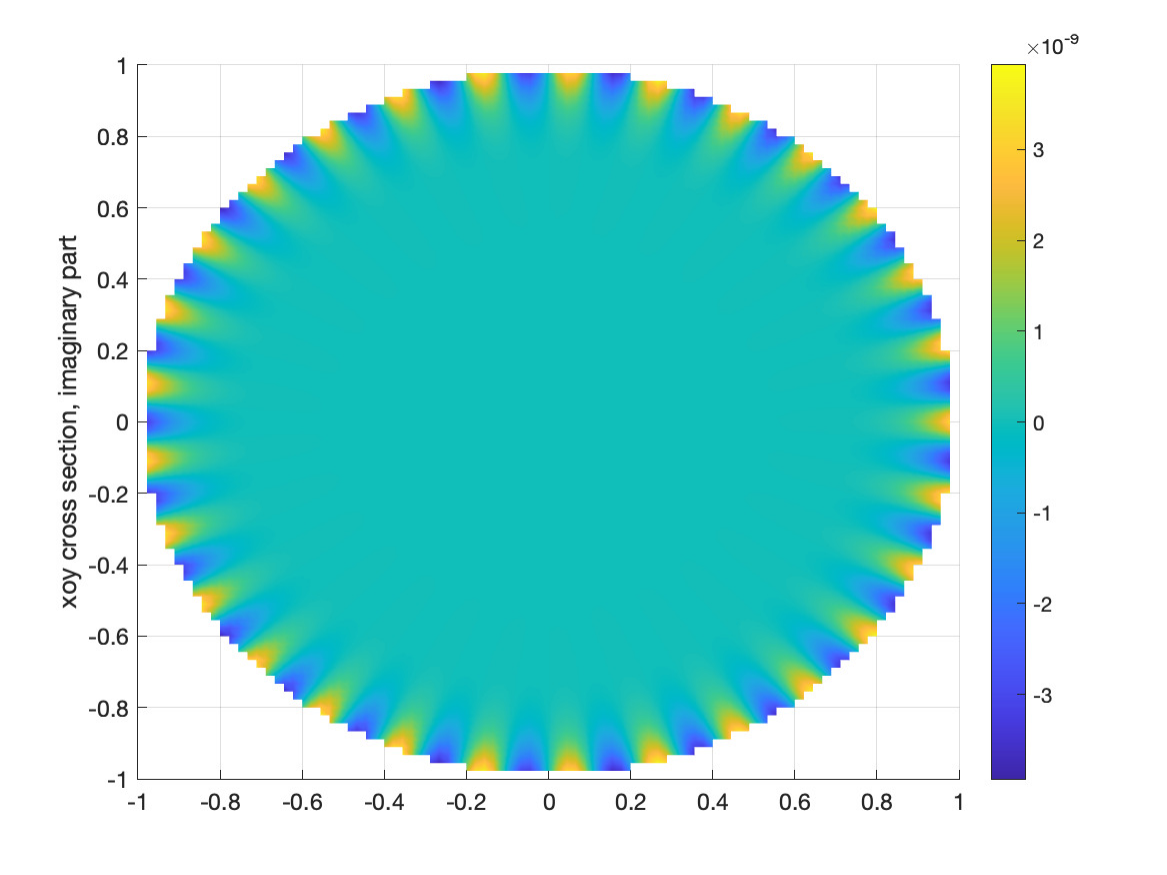} }\\
\caption{Reconstruction of three cubes with noiseless processed data, $k=15,~T=23,~M_\theta=31,~M_\phi=61$. The first row (a)   ground truth, (b) (c) isosurfaces of reconstruction (real part) with level value  $0.5M,~0.65M$, where $M=\mbox{max}~q^{\sigma,\delta}$; the second (resp. third) row, cross section view of the real (resp. imaginary) part.    }\label{figure: Three cubes, analytic data, noise free}
\end{figure}

\begin{figure}[htbp]  
\centering  
\subfloat[]{ \includegraphics[width=0.32\linewidth]{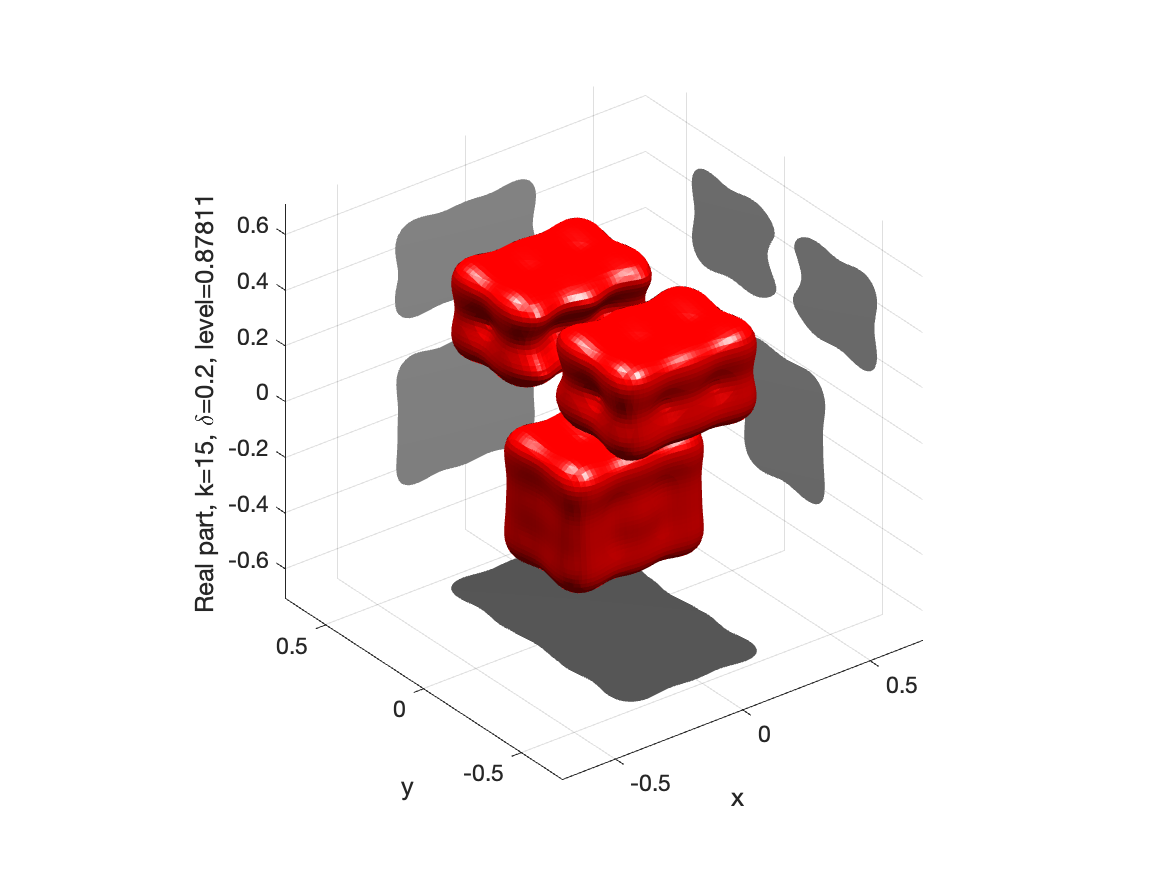} }
\subfloat[]{ \includegraphics[width=0.32\linewidth]{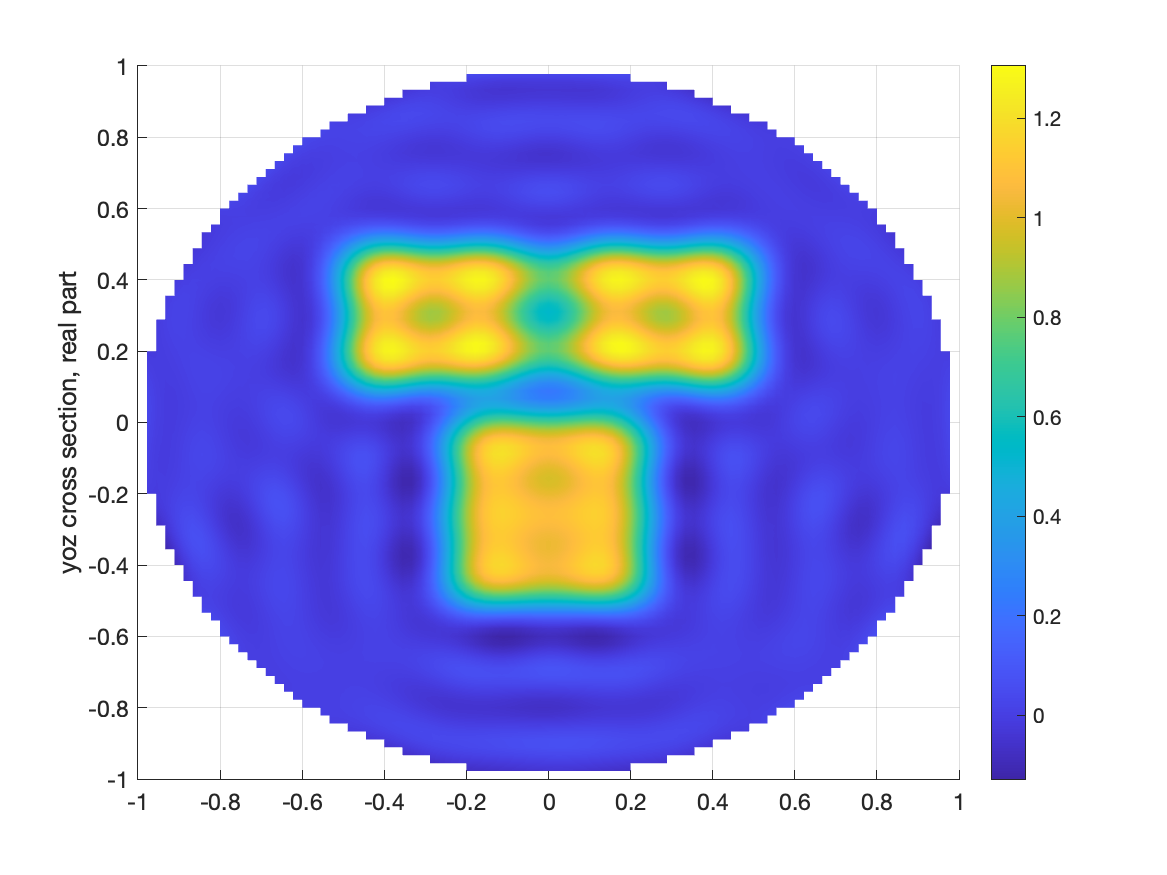} }
\subfloat[]{ \includegraphics[width=0.32\linewidth]{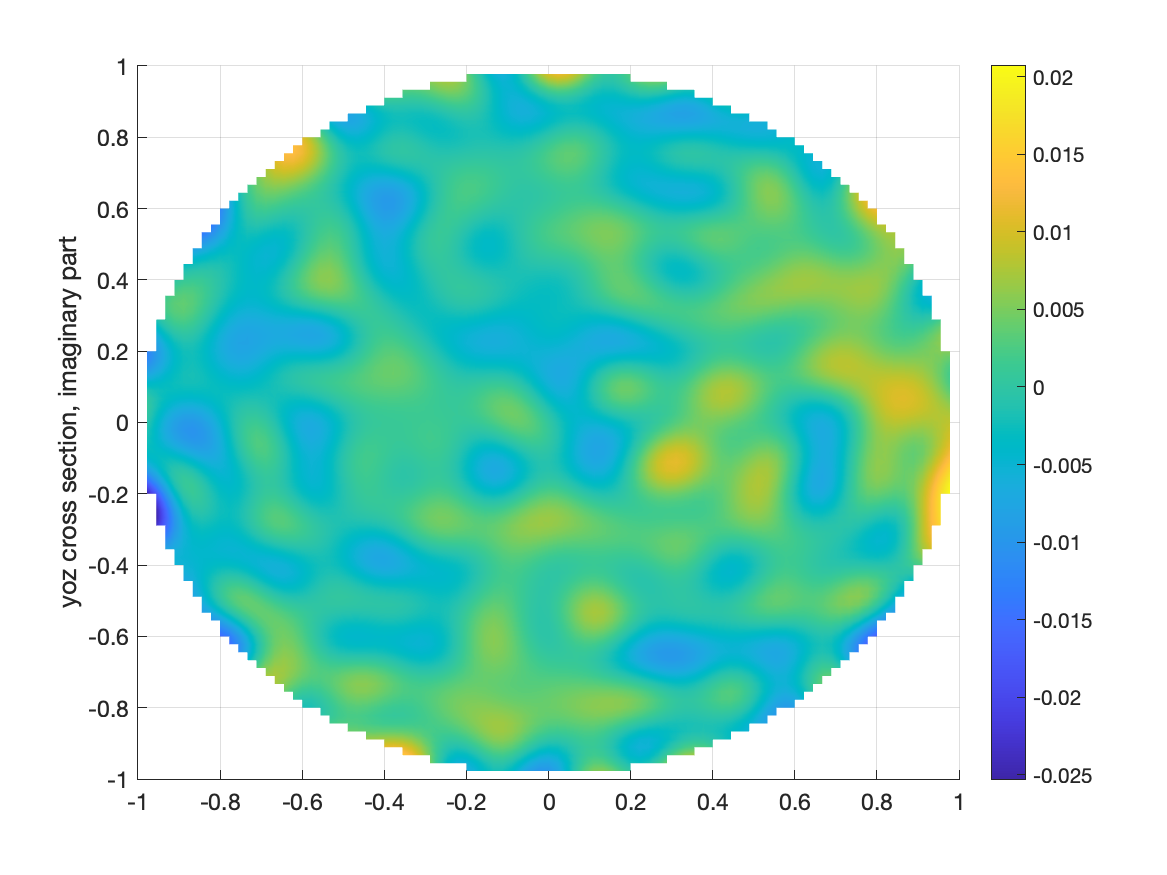} }
\caption{Reconstruction of three cubes with noisy processed data, $\delta=0.2$ and all other parameters are the same as Figure \ref{figure: Three cubes, analytic data, noise free}.   }\label{figure: Three cubes, noisy analytic data}
\end{figure}

\subsubsection{Far field data}
\textbf{Born far field data}. The discrete Born far field data are given at incident directions $\{\hat{\theta}_\ell\}_{\ell=1}^{N_2}$ and observation direction $\{\hat{x}_j\}_{j=1}^{N_1}$; both $\{\hat{\theta}_\ell\}_{\ell=1}^{N_2}$ and  $\{\hat{x}_j\}_{j=1}^{N_1}$ follow the Fibonacci lattice  on the sphere  $\mathbb{S}^2$, which was studied in geosciences \cite{gonzalez2010measurement}. Having the analytic formula in \eqref{section numeric: born processed data}, Born far field data can be   generated by,
\begin{equation}\label{eq: Born data}
u_b^\infty(\hat{x}_j;\hat{\theta}_\ell;k)=\frac{k^2}{4\pi}u_b\Big(\frac{\hat{\theta}_\ell-\hat{x}_j}{2};2k \Big),~1\leq j\leq N_1,~1\leq \ell \leq N_2.
\end{equation}

\vspace{1\baselineskip}

\noindent\textbf{Full far field data}.
To test the performance of the proposed method with the fully nonlinear model,
we use the Matlab toolbox IPscatt \cite{burgel2019algorithm} to generate the full far field data
$$
\{u^\infty(\hat{x}_j;\hat{\theta}_\ell;k):1\leq j\leq N_1,~1\leq \ell\leq N_2\}.
$$
To characterize the error between Born model \eqref{eq: Born model} and  the full model \eqref{eq: full model}, we follow \cite{burgel2019algorithm} to introduce the relative modeling error by
${\rm rel}(k)=\frac{\|U-U_b\|_2}{\|U\|_2}$,
where $U=(u^s_j(x_n))_{n=1}^{N  N_2},~U_b=(u^s_{b,j}(x_n))_{N_2 \times N}$; each $u^s_j(x_n)$ and   $u^s_{b,j}(x_n)$ with $j \in \{1,2,\cdots, N_2\}$ is the radiating solution of full model and the Born model due to the $j$-th incident wave $e^{ik\hat{\theta}_j\cdot x}$, respectively; here  $\{x_n\}_{n=1}^N,$ is a set of discretization points in the domain of interest $ [-\frac{\sqrt{2}}{2},\frac{\sqrt{2}}{2}]^3$, and $N$ is set to $91^3$ in the numerical experiments.

\vspace{1\baselineskip}

\noindent\textbf{Processed noisy data}. After generating the  far field  data, the processed data are further generated by 
$u_b(\tilde{p}_n;c)=\frac{4\pi}{k^2}u_b^\infty(\hat{x}_{j^*};\hat{\theta}_{\ell^*};k)$ according to \eqref{eq: exctraction}--\eqref{eq: processed data}, where we recall that  $\tilde{p}_n=\frac{\hat{\theta}_{\ell^*}-\hat{x}_{j^*}}{2}$ is the approximate  quadrature node.
Noisy data $u_b^{\tilde{\delta}}$ are again generated according to \eqref{eq: add noise}; in the case of full far field data, one simply repeats the above procedure with  $u_b(\cdot;c)$ being replaced   by $u(\cdot;c)$.
The regularization parameter is set to $\epsilon=0.9|\alpha_{0,0}|$ to maintain robustness, as we assume no prior information of modeling error.

\subsection{Numerical results}

\subsubsection{Reconstruction using Born processed data}
We first test the proposed method by reconstructing three cubes using directly the   Born  processed data given in \Cref{section numeric: born processed data}, with noise level  $\delta=0$ (Figure \ref{figure: Three cubes, analytic data, noise free}) and  $\delta=0.2$ (Figure \ref{figure: Three cubes, noisy analytic data}), respectively. The ground truth of the three cubes is given in Figure \ref{figure: Three cubes, analytic data, noise free}  (a), the reconstructions are plotted using both isosurface   and the cross-section views. The proposed method yields good reconstructions in this case when the smallest distance between cubes is $0.05$ which is smaller than half wavelength $\pi/k\approx 0.21$. The proposed method also performs well using noisy processed data, see Figure \ref{figure: Three cubes, noisy analytic data}.  This demonstrates the potential of the proposed method.

\subsubsection{Reconstruction using Born far field data}
The previous processed data are generated directly by analytic formula. To illustrate the effect of the data processing using \eqref{eq: exctraction}--\eqref{eq: processed data},
We next conduct experiments using the Born far field data as described in \eqref{eq: Born data} for three different cases. The number of incident and observation directions are $N_1=N_2=201$, which are distributed according to the Fibonacci lattice on the sphere. Additionally, noises are added to the data through \eqref{eq: add noise}. In Figure \ref{figure:Born data}, the left column is for a ball, the middle column is for a cube, and the right column is for an oscillating contrast, respectively. The top row shows the ground truth, the middle row shows the isosurface view of the reconstruction, and the bottom row shows the cross section view of the reconstruction, respectively.

\begin{figure}[htbp]  
\centering  
\subfloat[]{ \includegraphics[width=0.32\linewidth]{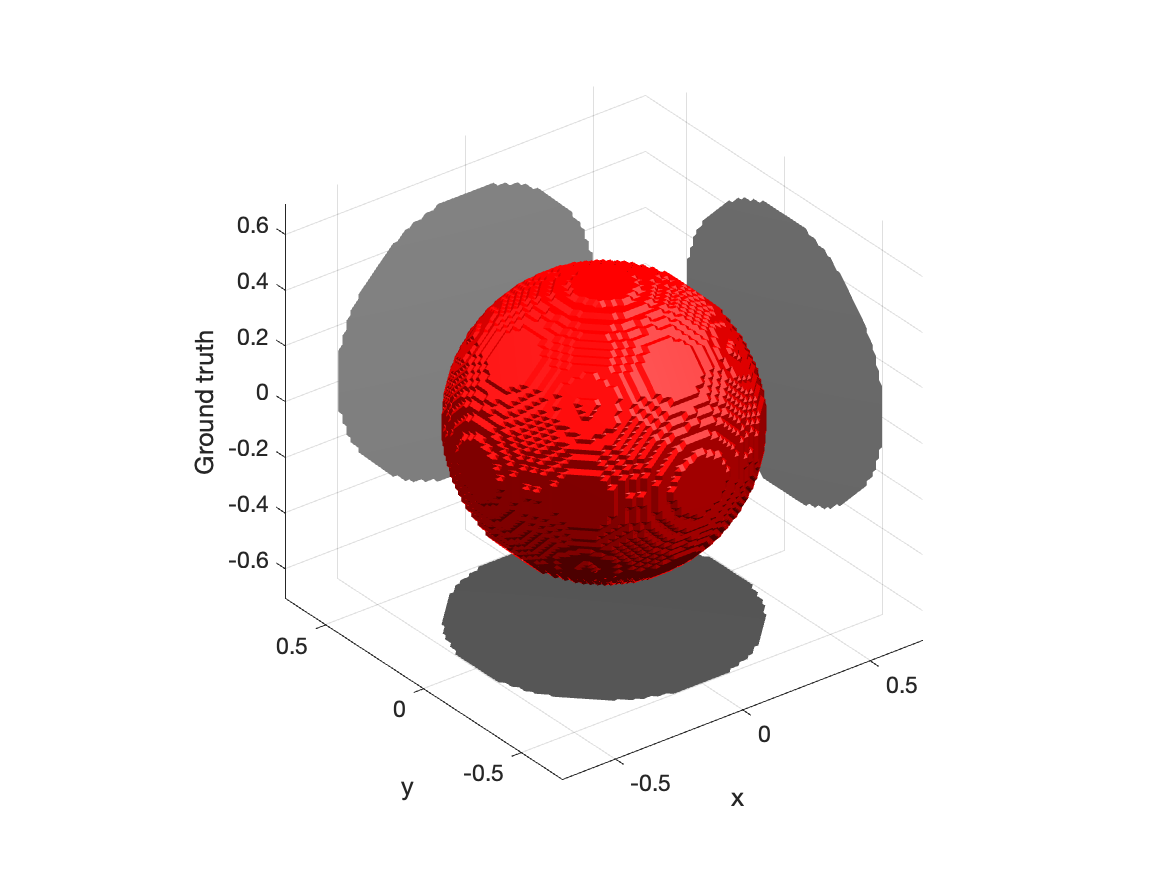} }
\subfloat[]{ \includegraphics[width=0.32\linewidth]{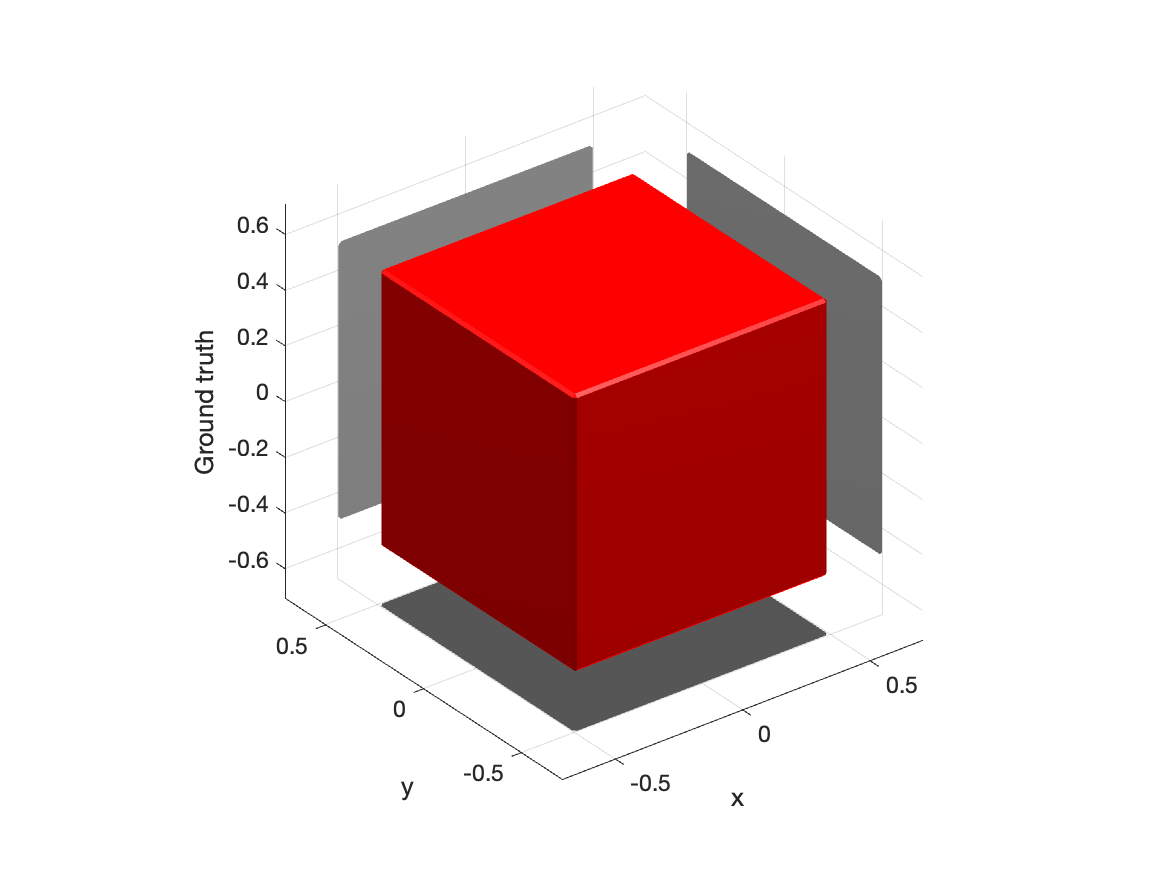} }
\subfloat[]{ \includegraphics[width=0.32\linewidth]{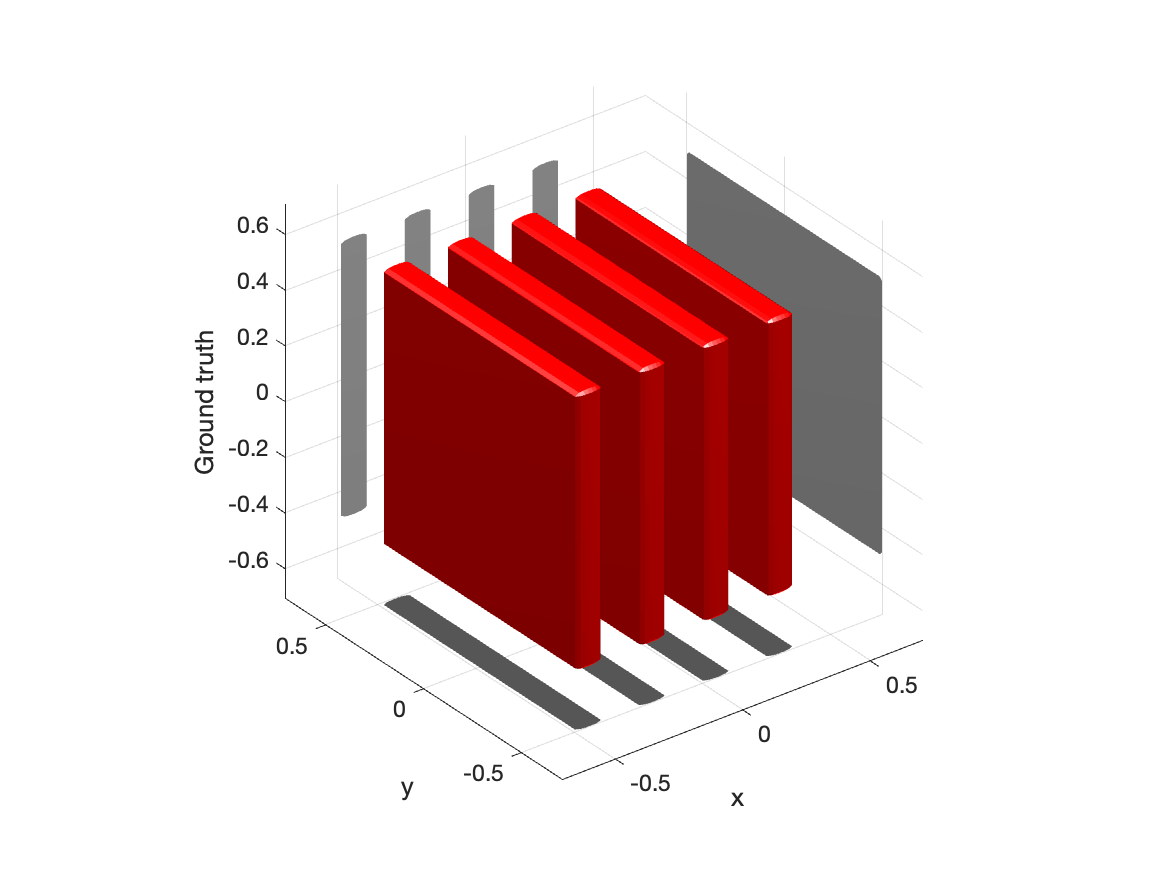} }\\
\subfloat[]{ \includegraphics[width=0.32\linewidth]{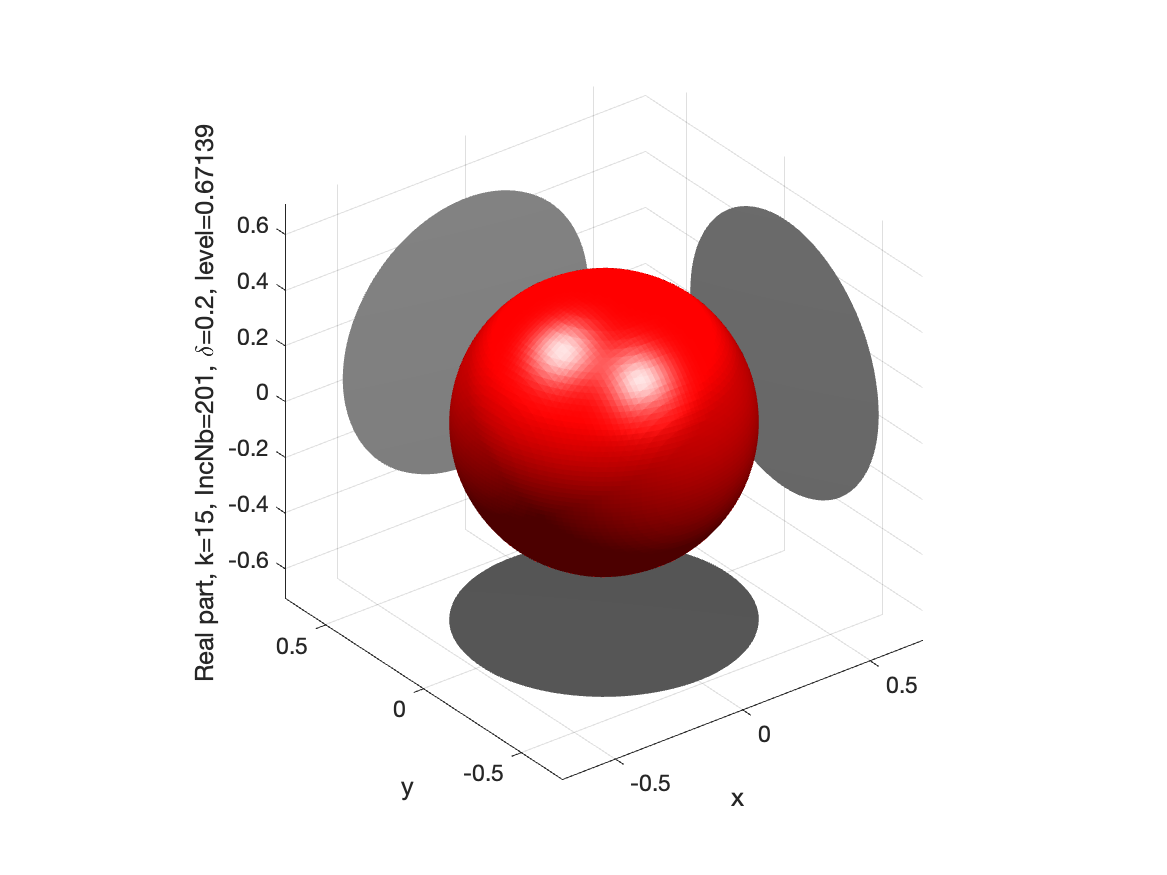} }
\subfloat[]{ \includegraphics[width=0.32\linewidth]{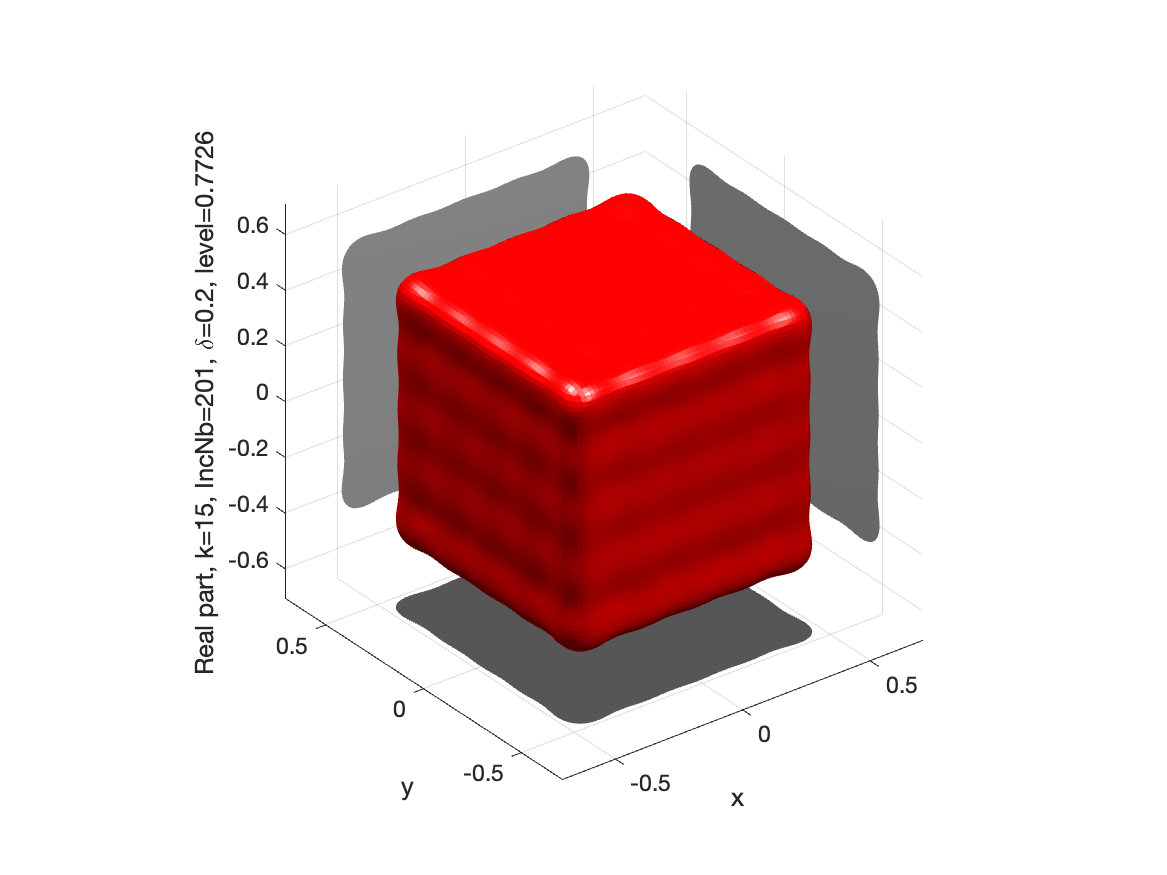} }
\subfloat[]{ \includegraphics[width=0.32\linewidth]{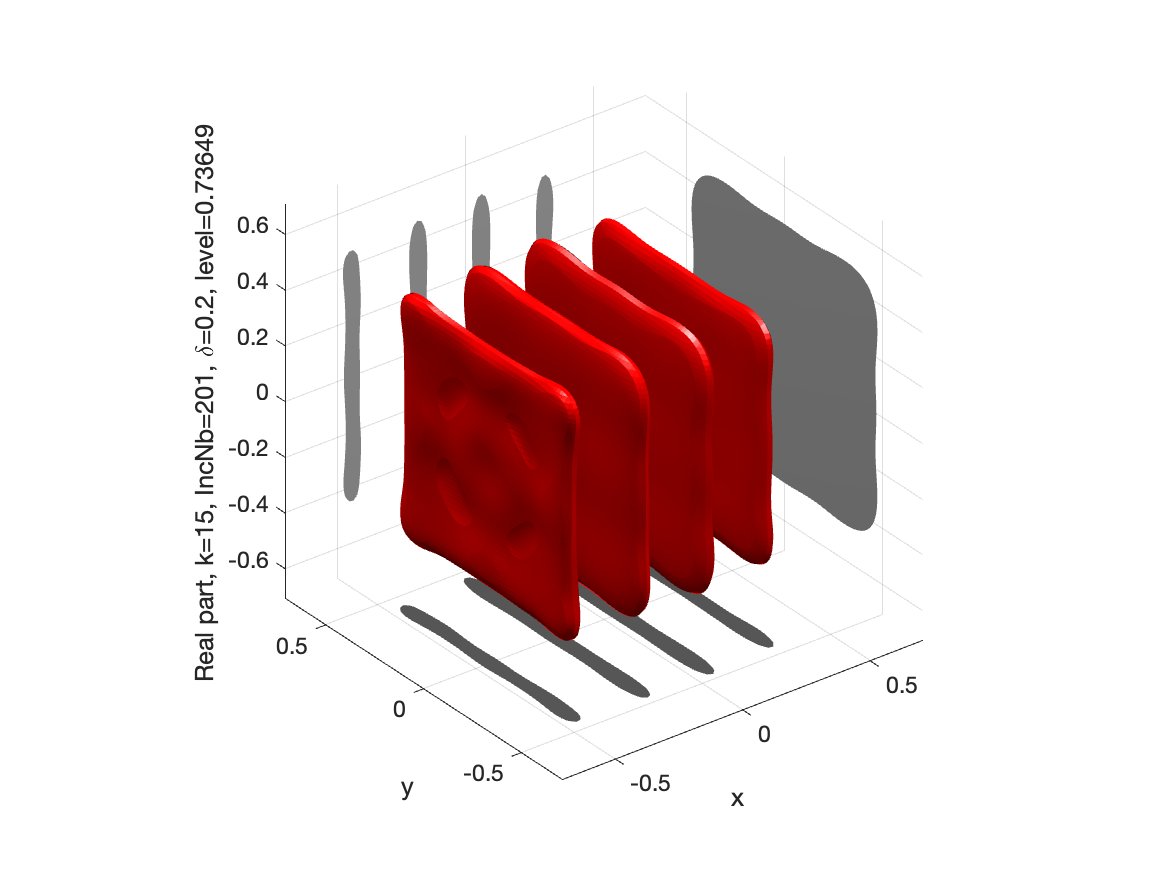} }\\
\subfloat[]{ \includegraphics[width=0.32\linewidth]{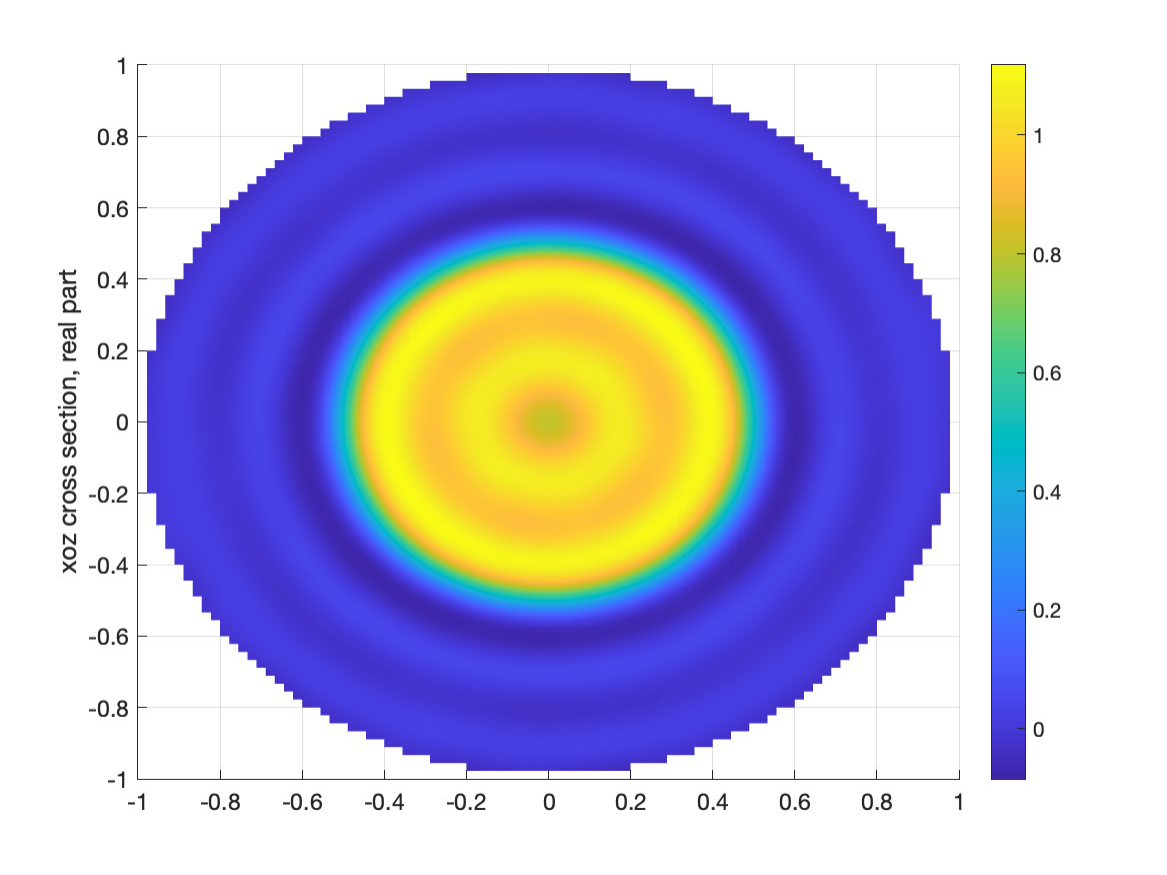 } }
\subfloat[]{ \includegraphics[width=0.32\linewidth]{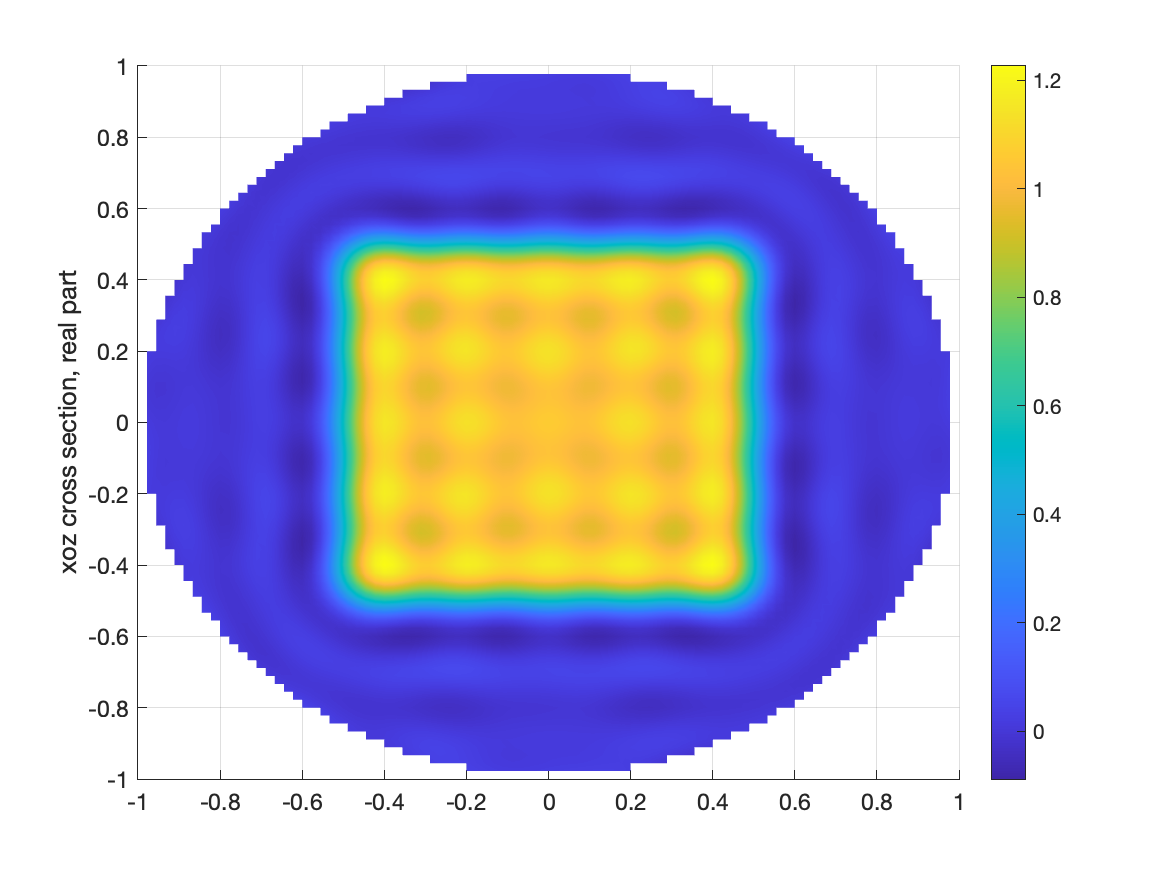} }
\subfloat[]{ \includegraphics[width=0.32\linewidth]{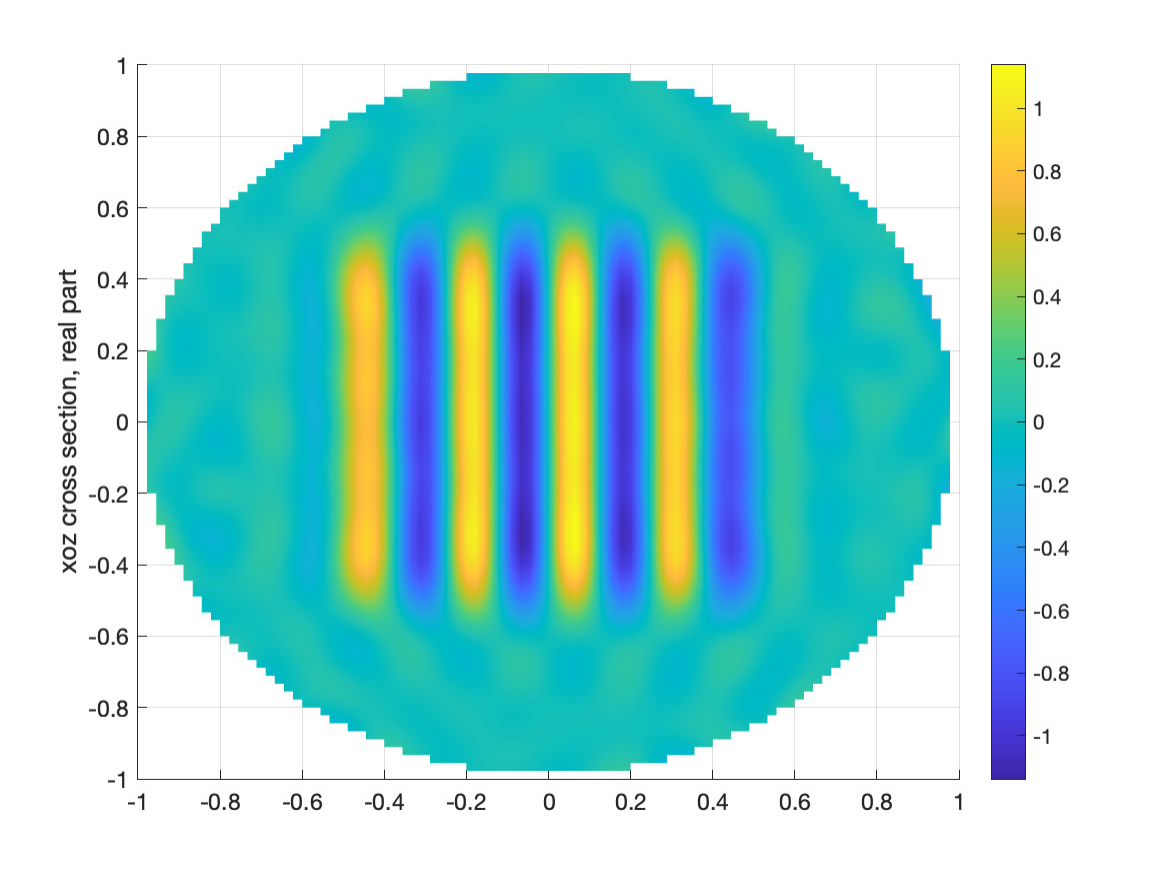} }
\caption{Reconstruction of three contrasts with  Born far field data with $k=15,~\delta=0.2$, and $N_1=N_2=201$. The first row:  isosurface view of ground truth;  the second row: isosurface of reconstruction; the third row:
cross section view of the reconstruction  of a ball, a cube and the oscillatory contrast, respectively.  }\label{figure:Born data}
\end{figure}

\subsubsection{Reconstruction using full far field data (nonlinear model)}
After successfully testing the Born data, we shift our focus to the full far field data where the modeling error is not overly large. In particular the largest amplitudes of the contrast ``cross3D" and ``three cubes" are $0.03$ and $0.02$, respectively. To shed light on the changes of resolution with respect to the wave number, we test $k\in \{10,~15,~20\}$. As is observed in Figure\ref{figure:full data cross3D} and Figure \ref{figure:full data 3cubes3D}, the resolution improves as the wave number $k$ increases.

\begin{figure}[htbp]  
\centering  
\subfloat[]{ \includegraphics[width=0.24\linewidth]{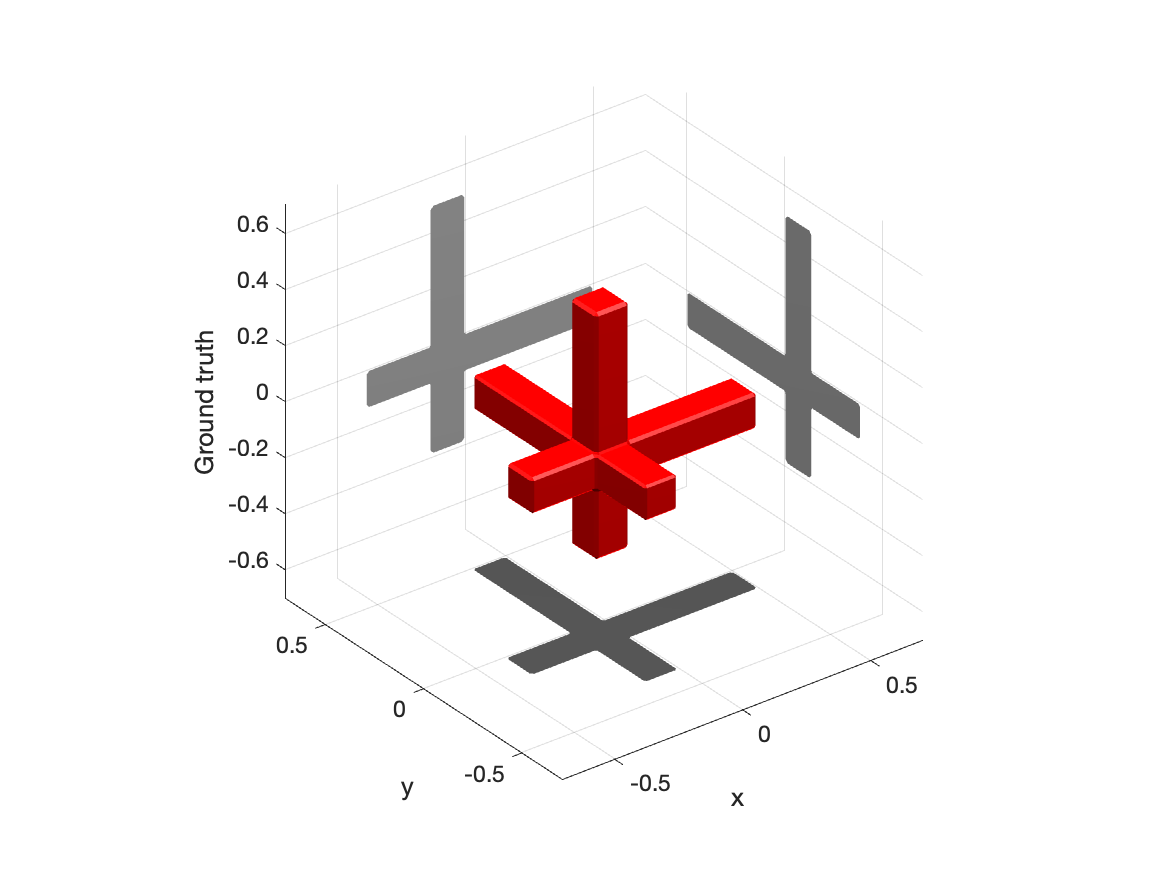} }
\subfloat[]{ \includegraphics[width=0.24\linewidth]{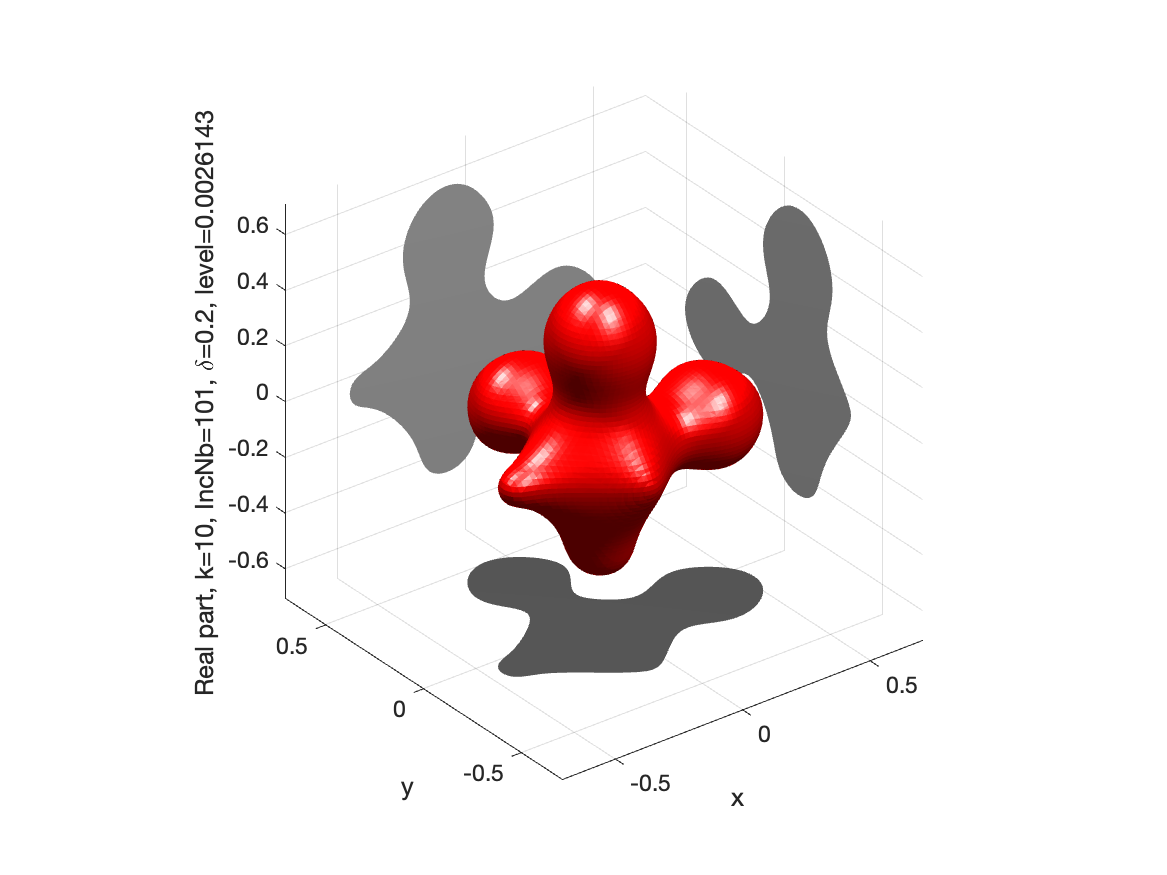} }
\subfloat[]{ \includegraphics[width=0.24\linewidth]{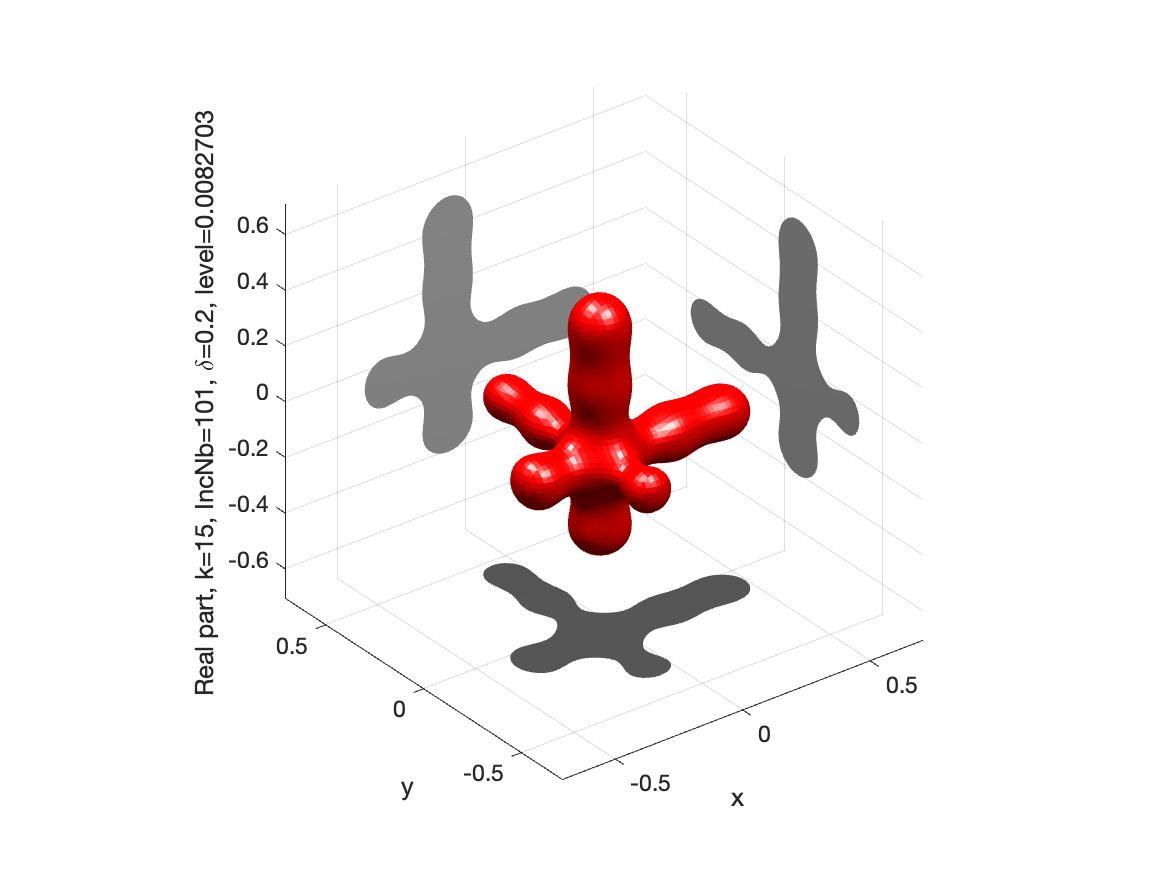} }
\subfloat[]{ \includegraphics[width=0.24\linewidth]{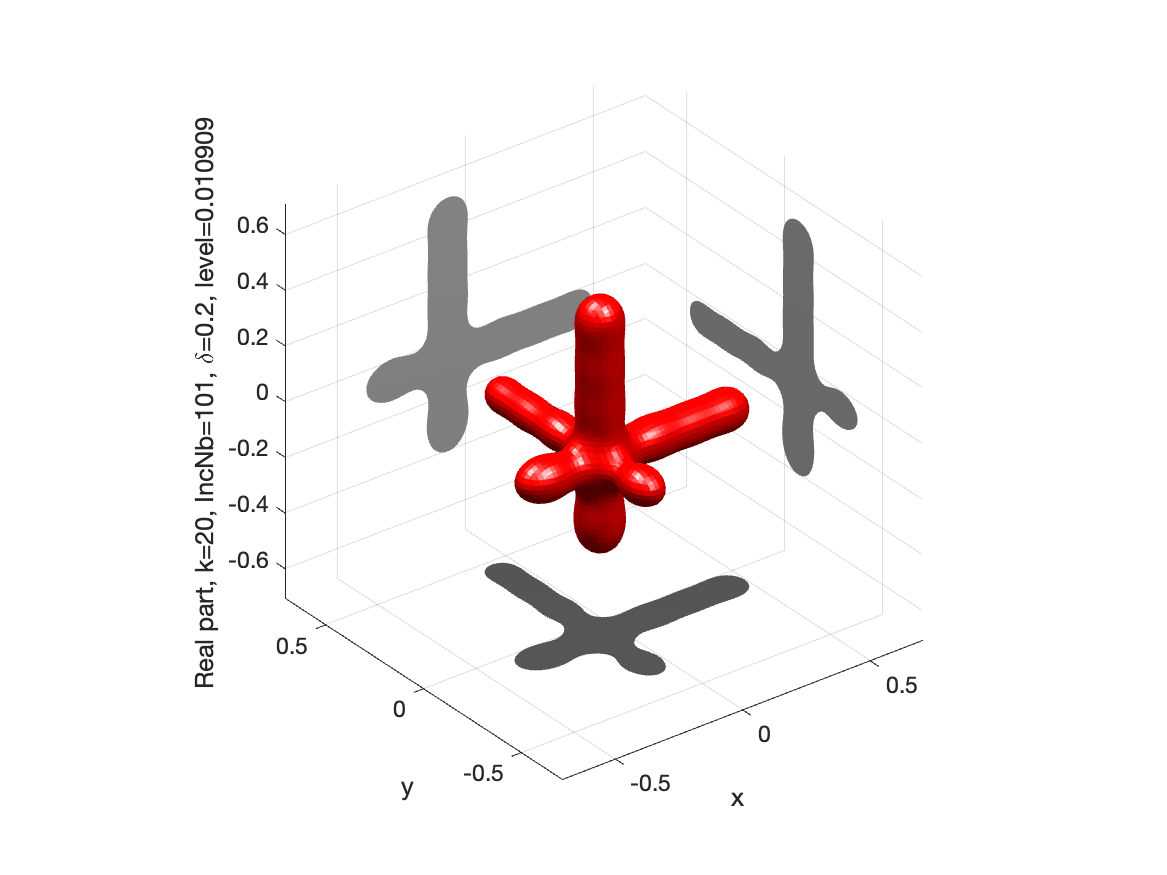} }\\

\subfloat[]{ \includegraphics[width=0.24\linewidth]{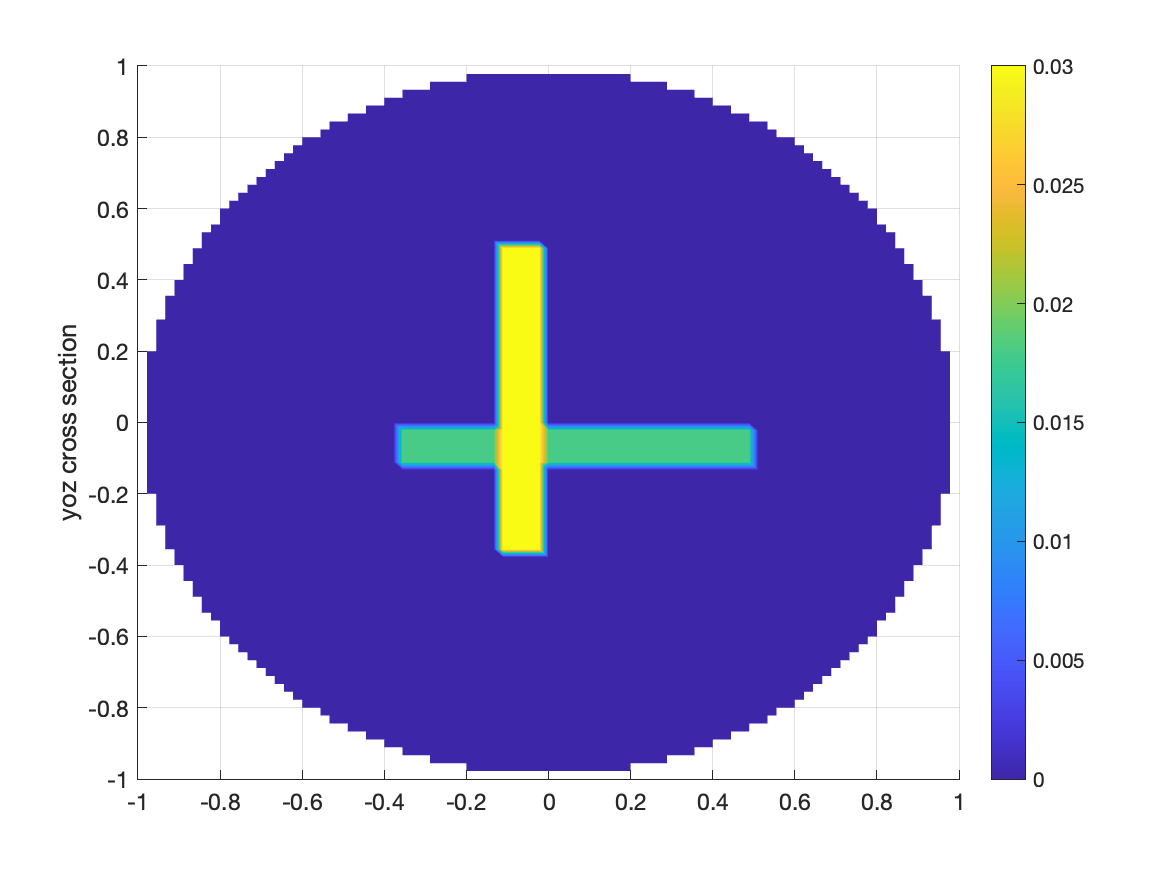} }
\subfloat[]{ \includegraphics[width=0.24\linewidth]{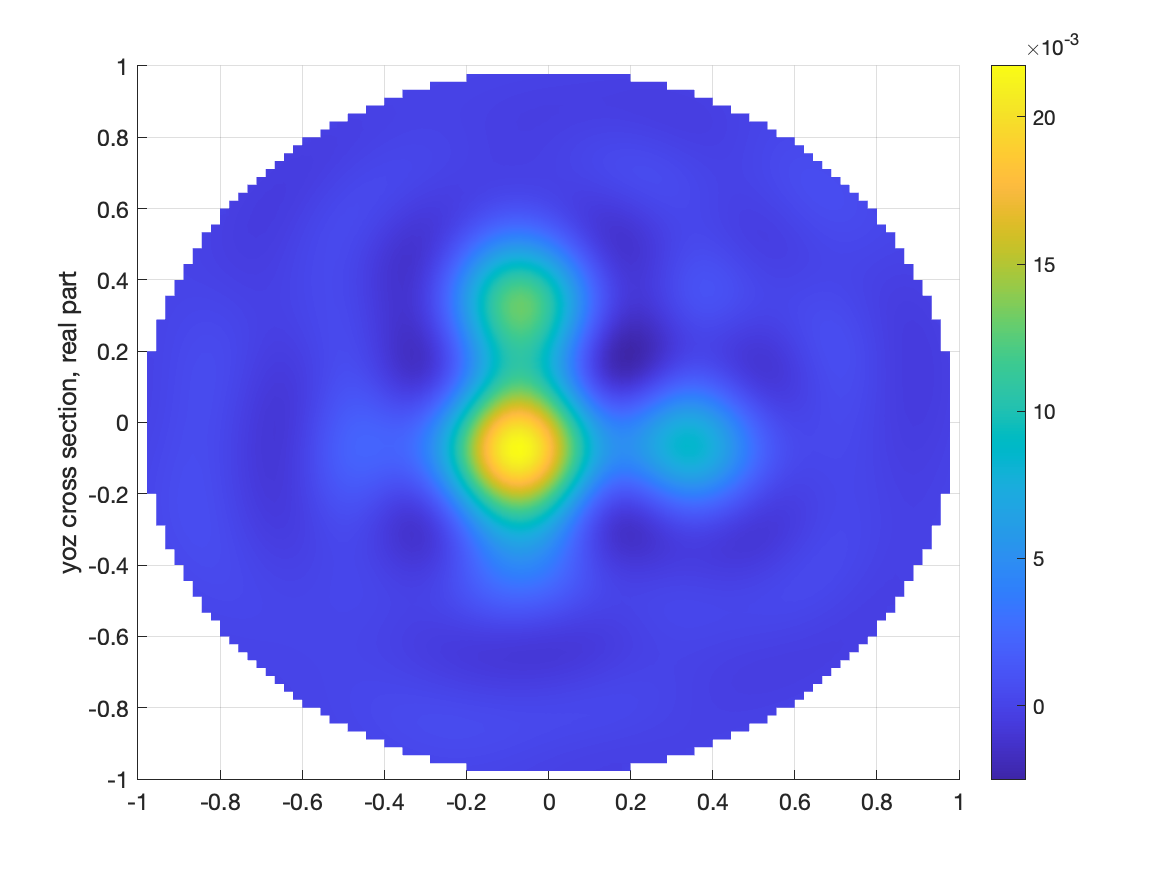} }
\subfloat[]{ \includegraphics[width=0.24\linewidth]{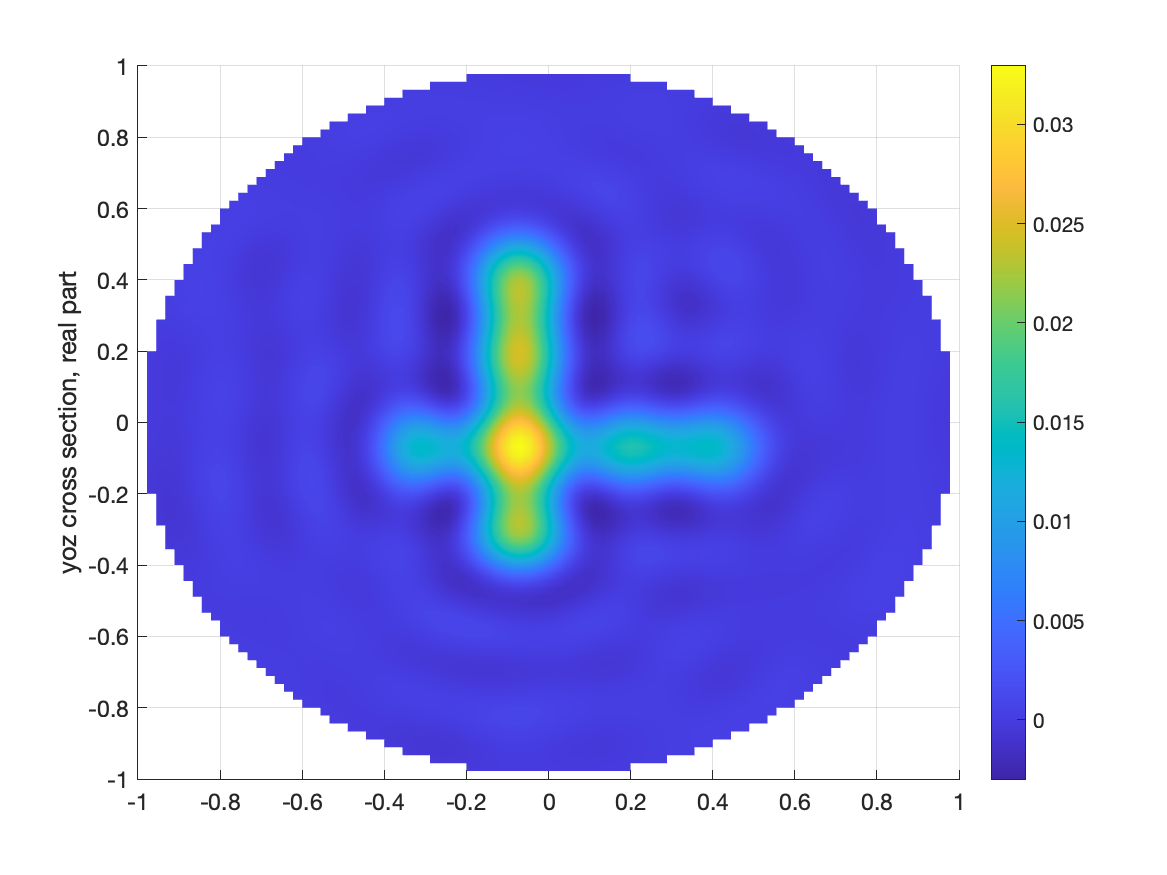} }
\subfloat[]{ \includegraphics[width=0.24\linewidth]{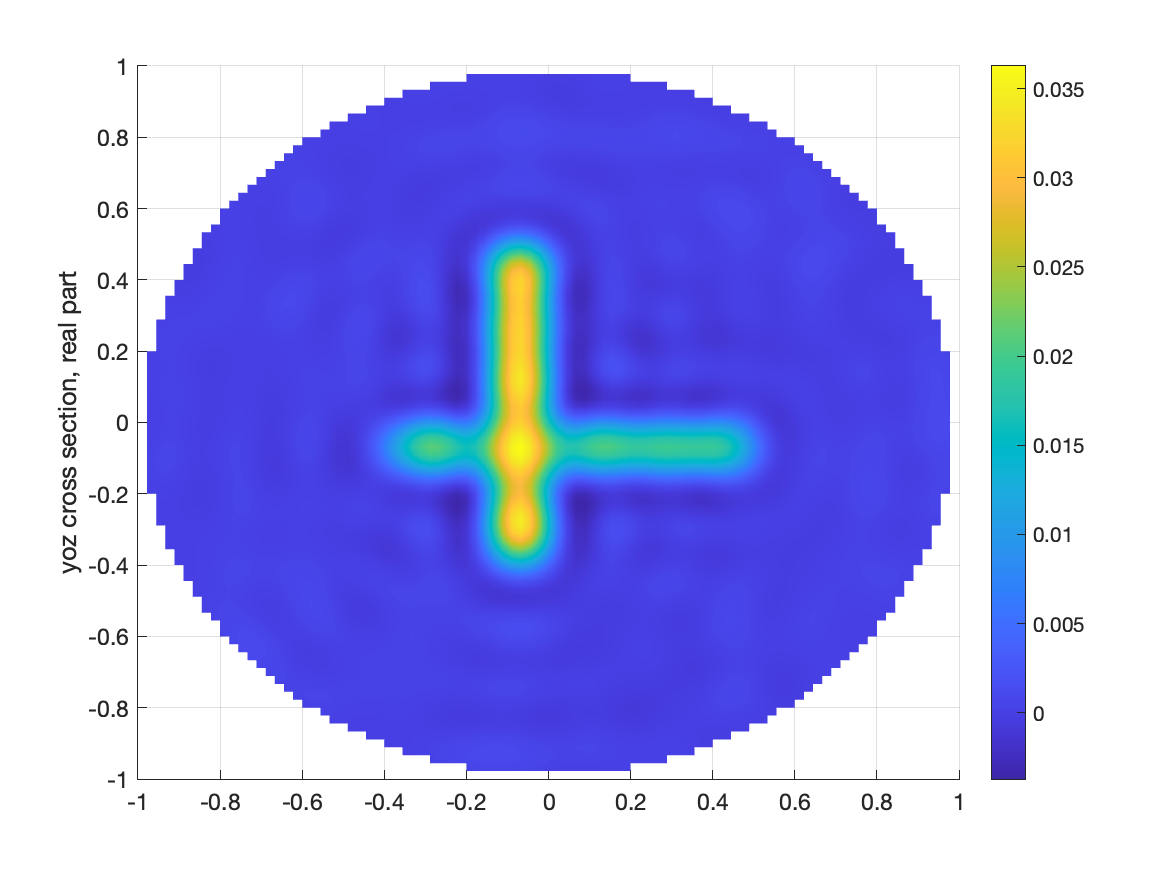} }\\

\subfloat[]{ \includegraphics[width=0.24\linewidth]{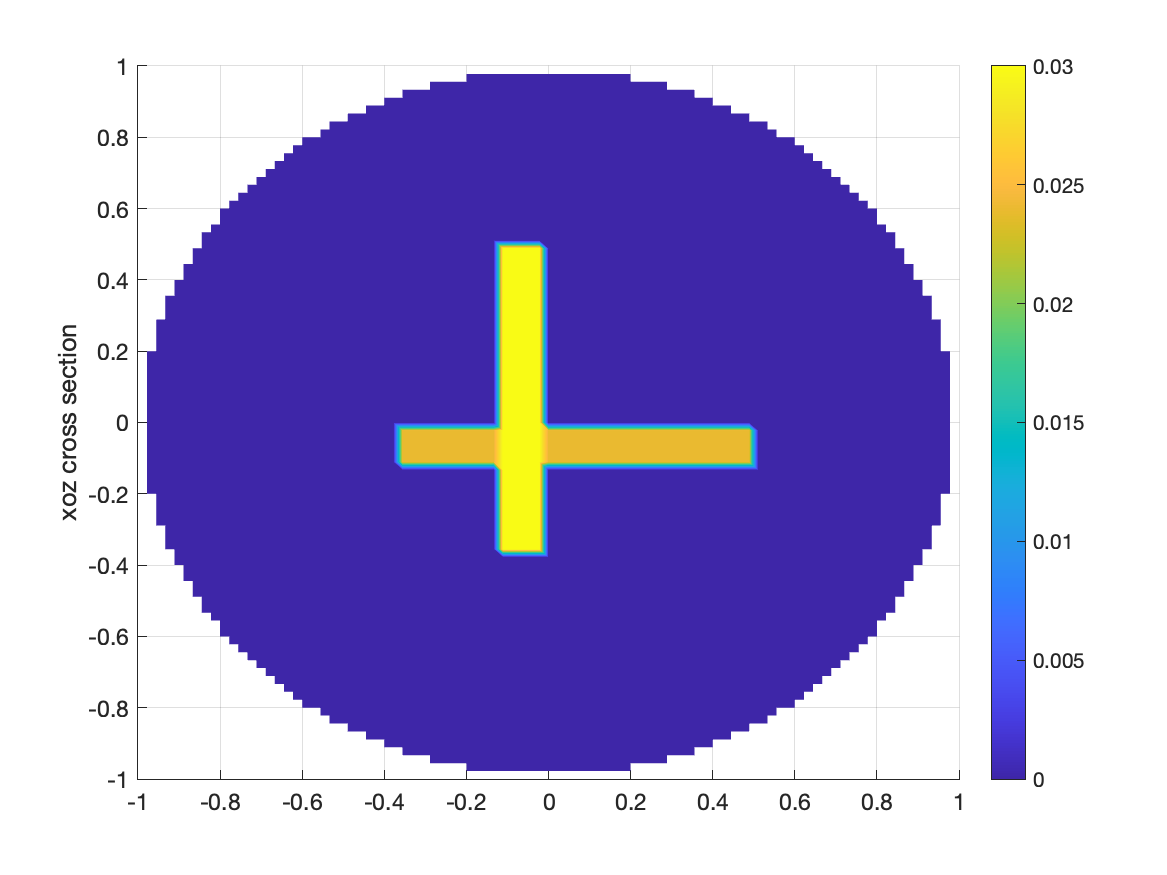} }
\subfloat[]{ \includegraphics[width=0.24\linewidth]{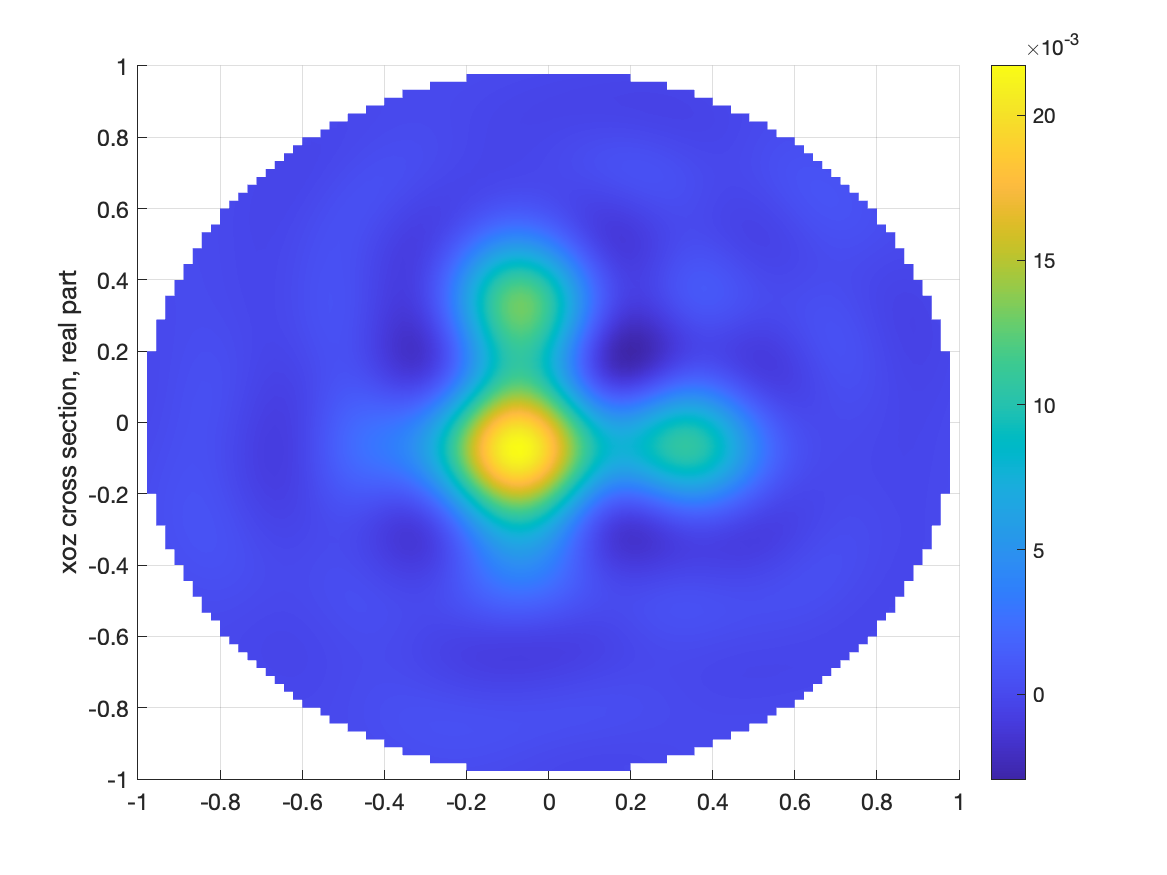} }
\subfloat[]{ \includegraphics[width=0.24\linewidth]{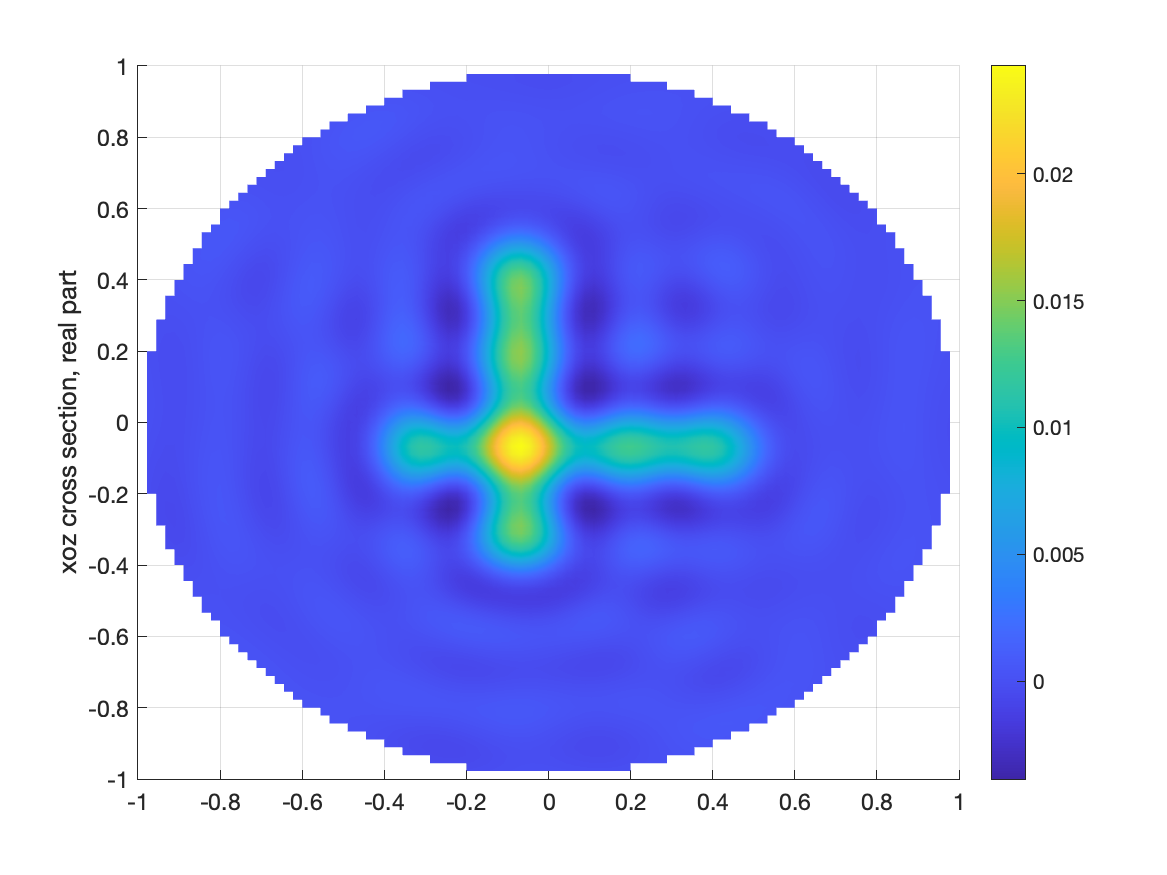} }
\subfloat[]{ \includegraphics[width=0.24\linewidth]{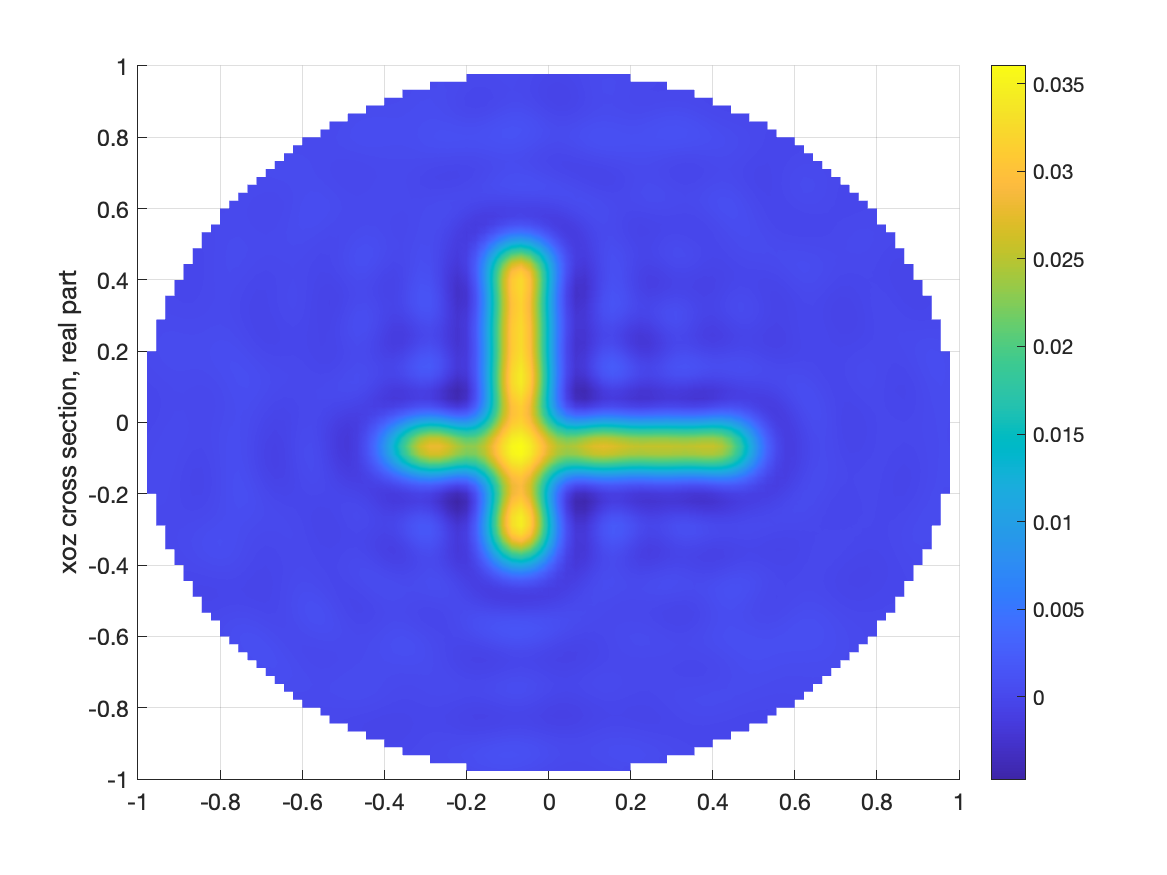} }\\

\subfloat[]{ \includegraphics[width=0.24\linewidth]{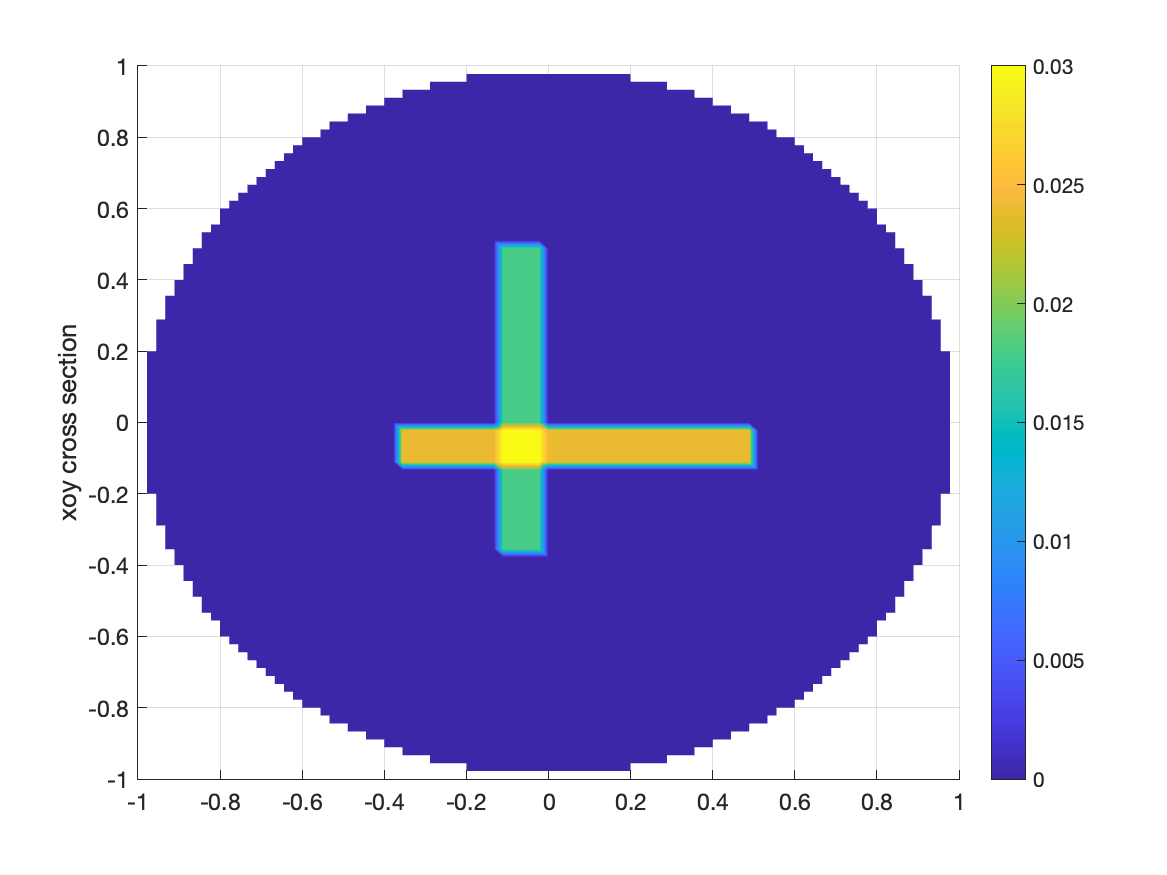} }
\subfloat[]{ \includegraphics[width=0.24\linewidth]{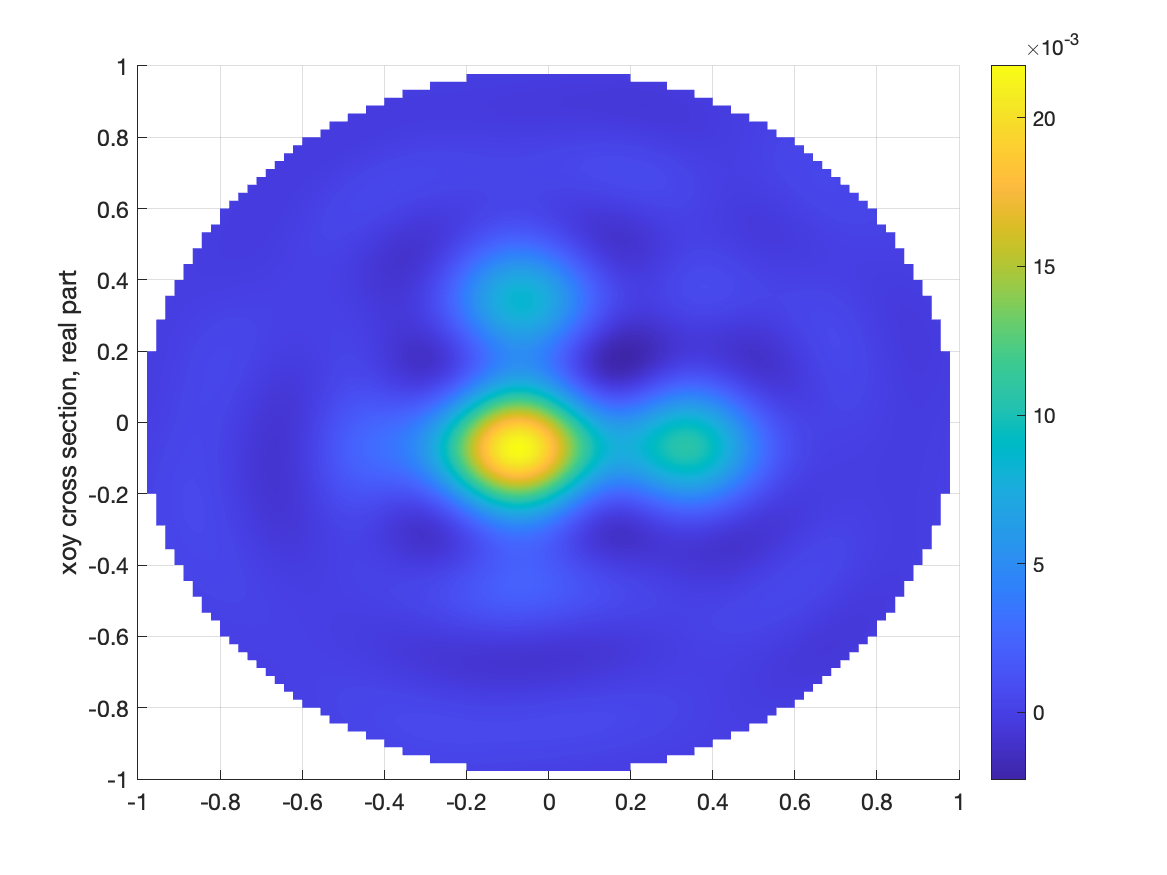} }
\subfloat[]{ \includegraphics[width=0.24\linewidth]{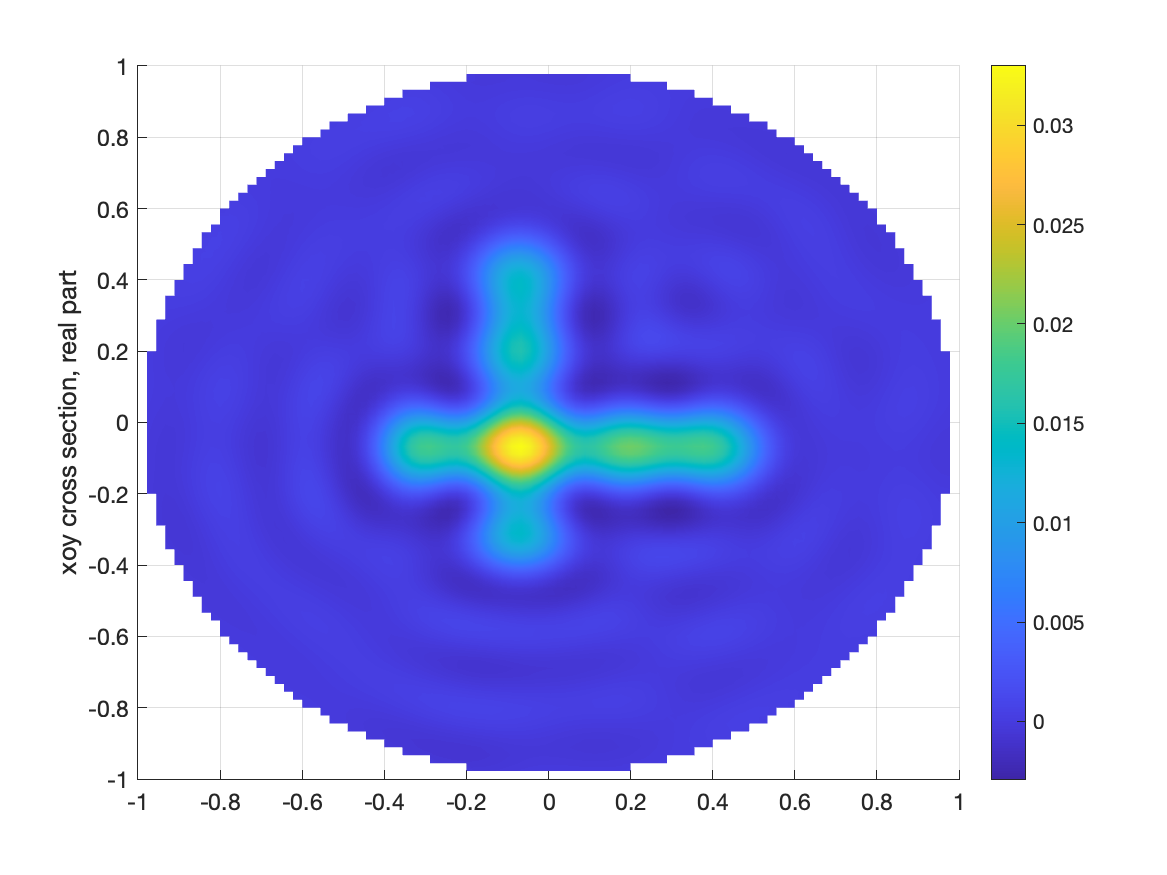} }
\subfloat[]{ \includegraphics[width=0.24\linewidth]{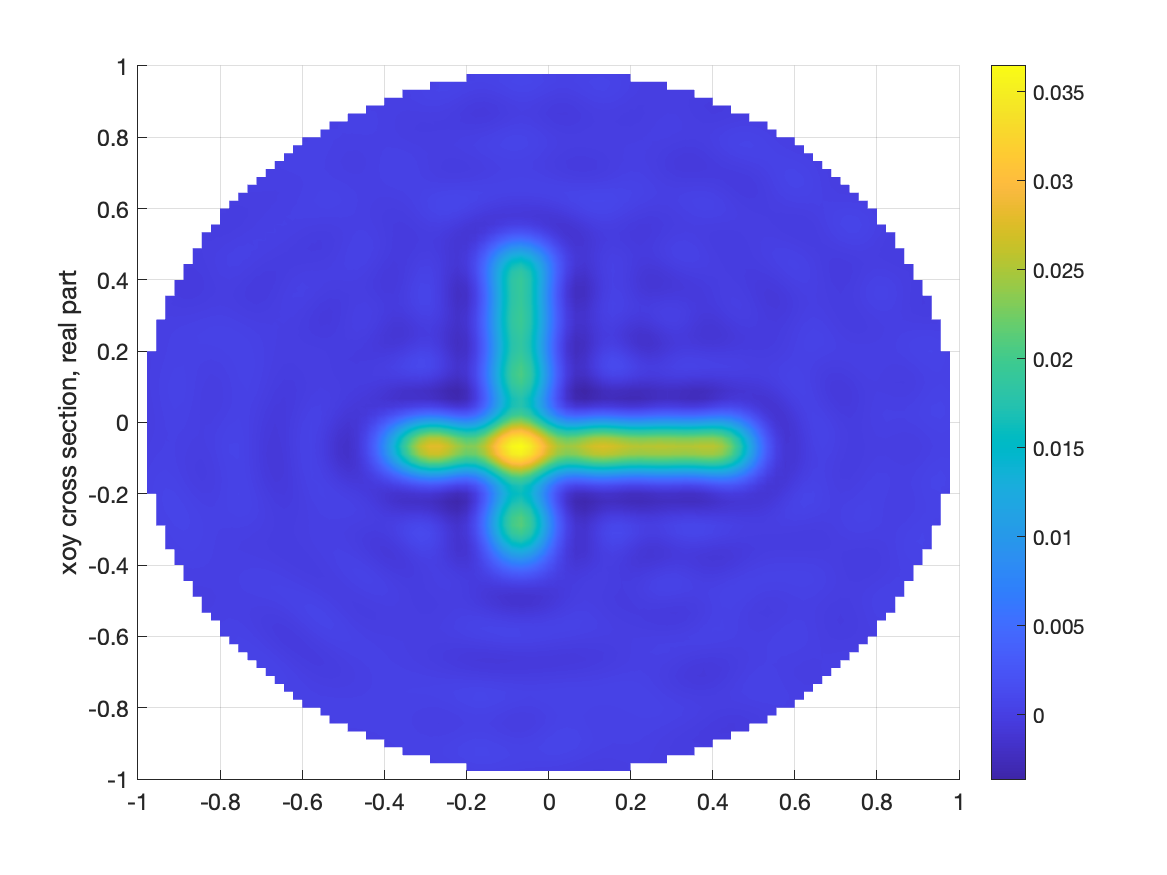} }

\caption{Reconstruction of ``cross3D" using  full far field data with observation and incident direction pairs $N_1\times N_2=101\times 101$ and noise level $\delta=0.2$. The modeling error is ${\rm rel}(k)=1.82\%,~3.28\%,~4.75\%$, respectively for $~k=10,~15,~20$. The first column: ground truth. The second, third, and fourth columns: reconstructions using $~k=10,~15,~20$, respectively. From the top to the bottom row: isosurface and the cross section views. The width of each bar is $1/8$, and the length is $7/8$. The protruding part on the shorter side is $1/4$ in length.  }\label{figure:full data cross3D}
\end{figure}

\begin{figure}[htbp]  
\centering  
\subfloat[]{ \includegraphics[width=0.25\linewidth]{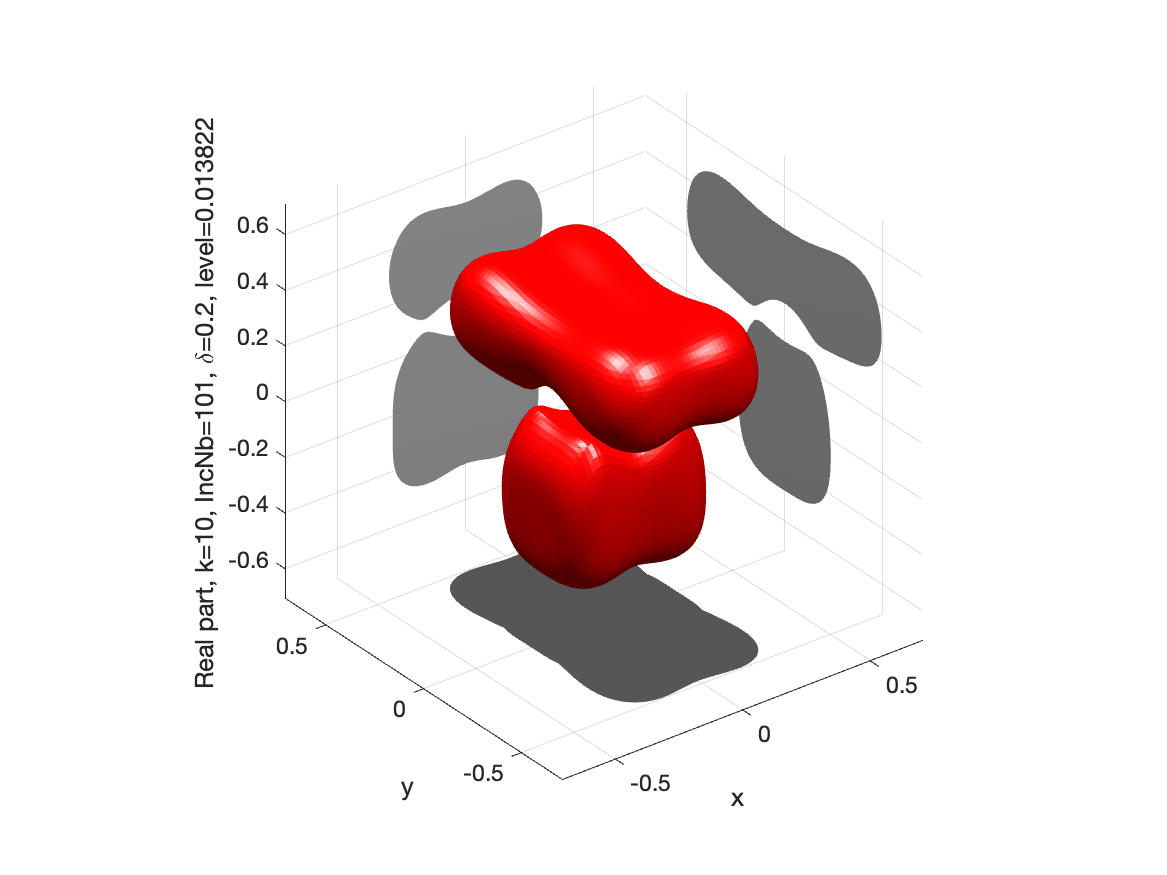} }
\subfloat[]{ \includegraphics[width=0.25\linewidth]{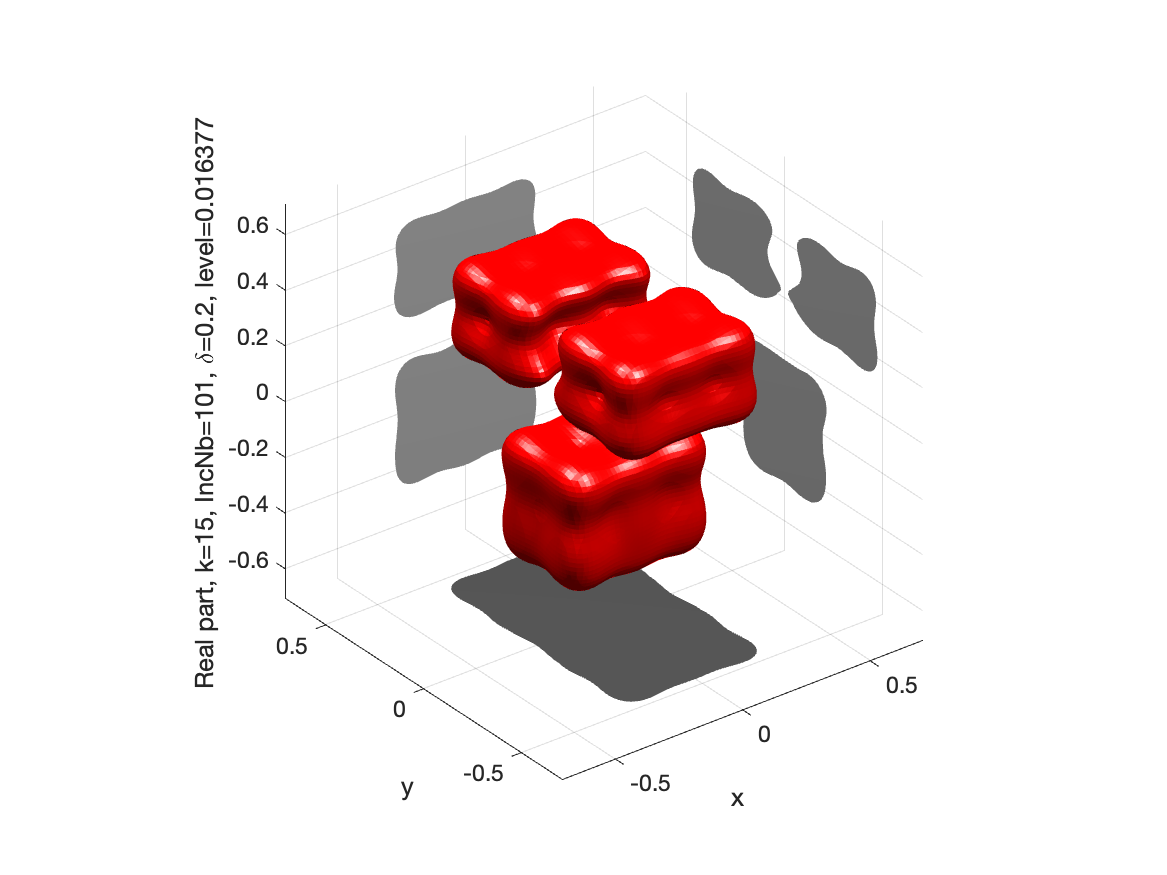} }
\subfloat[]{ \includegraphics[width=0.25\linewidth]{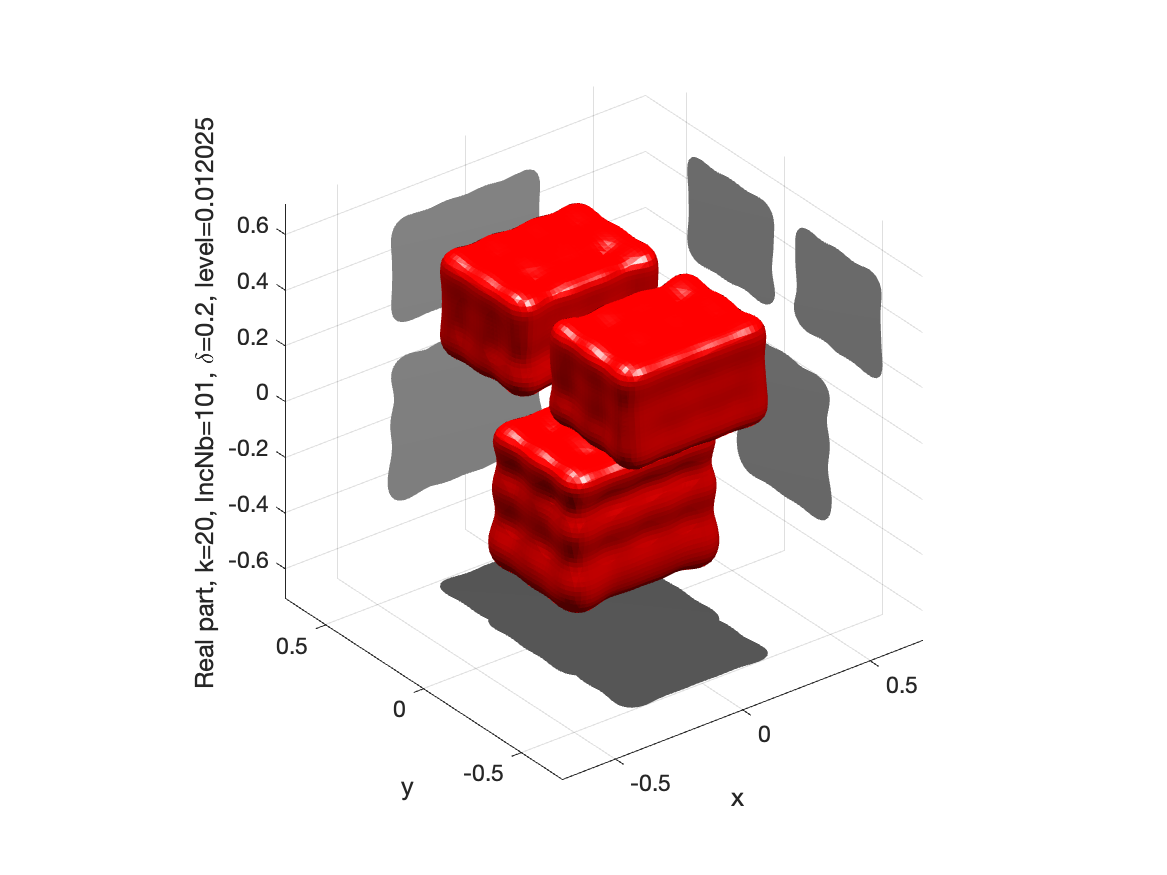} }
\subfloat[]{ \includegraphics[width=0.25\linewidth]{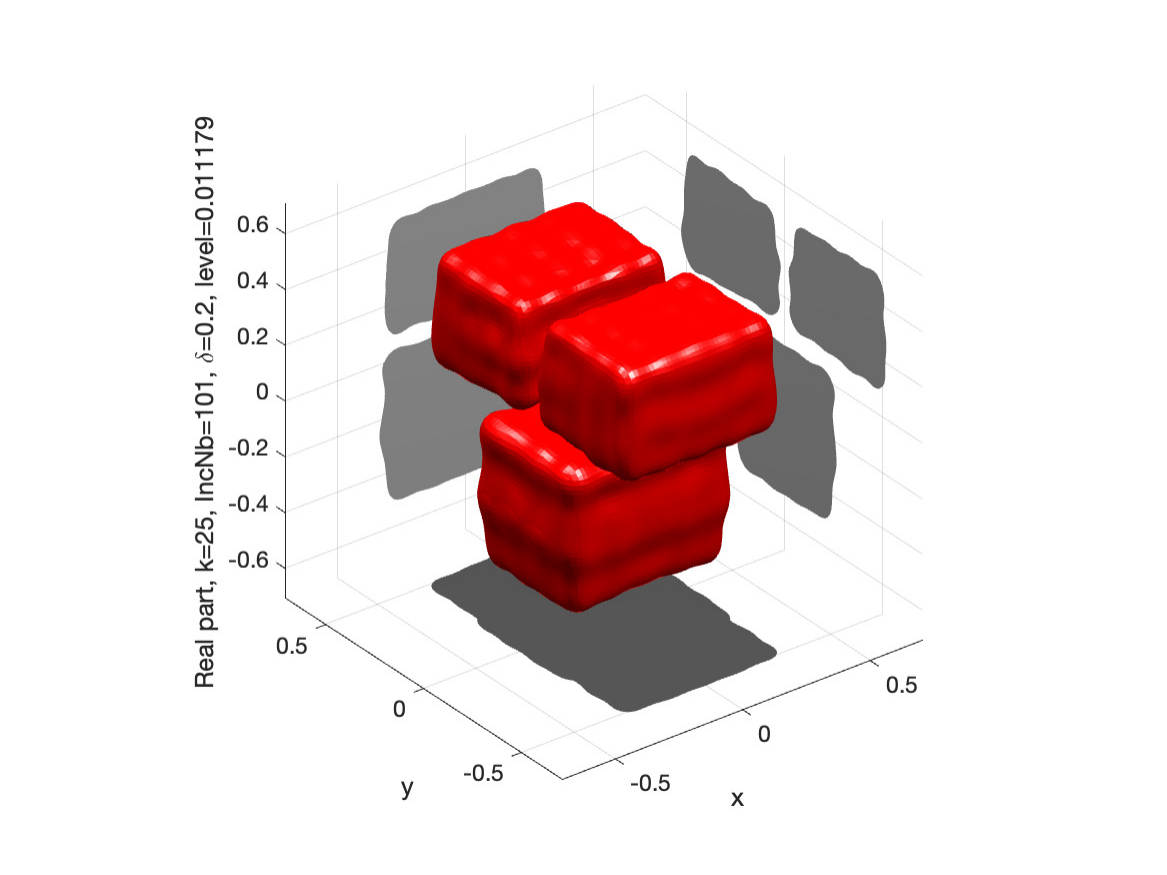} }
\caption{Reconstruction of three cubes using  full far field data with observation and incident direction pairs $N_1\times N_2=101\times 101$ and noise level $\delta=0.2$. The modeling error is ${\rm rel}(k)=1.82\%,~3.28\%,~9.15\%,~11.13\%$ for $~k=10,~15,~20,~25$, respectively. }
\label{figure:full data 3cubes3D}
\end{figure}

\subsubsection{Comparison with an iterative method}
Iterative methods typically construct a minimization functional with regularization terms and use optimization to find the minimum. The regularization terms need certain prior information about the contrast. The Toolbox IPscatt  \cite{burgel2019algorithm} offers a built-in iterative method, based on the assumption that the contrast is sparse and has sharp edges.  In iterative methods, technical difficulties may arise due to choice of regularization parameter, computational cost, as well as local minima.
The proposed method based on the low-rank structure is a direct reconstruction method, which does not require any prior information about the contrast and the regularization parameter is simple to determine. After the 3D PSWFs are precopmuted, the remaining computational cost is due to matrix vector multiplication. To fully demonstrate the potential of the proposed method, we test full far field data generated by the fully nonlinear model with relative modeling error ${\rm rel}(k)=66.82\%$ and  noise level $\delta=0.2$.
The iterative method with a fixed number of iterations ${\rm pdaN}=100$  costs more than one hour, where the proposed method only costs several seconds.  Both methods are accurate in terms of the real part, as shown in Figure \ref{figure: comparison}; however, the error of the iterative method is larger in the imaginary part, as can be seen in Figure \ref{figure: comparison error}.

\begin{figure}[htbp]  
\centering  
\subfloat[]{ \includegraphics[width=0.32\linewidth]{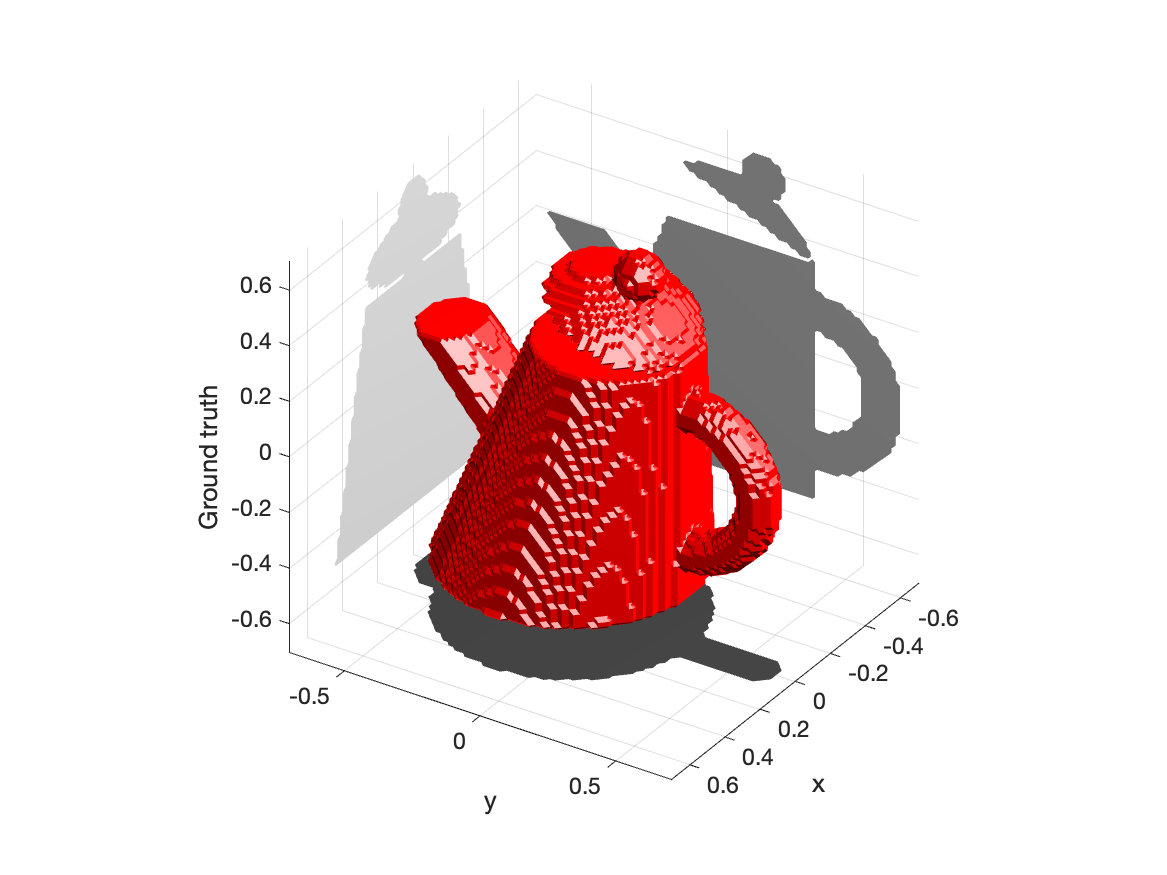} }
\subfloat[]{ \includegraphics[width=0.32\linewidth]{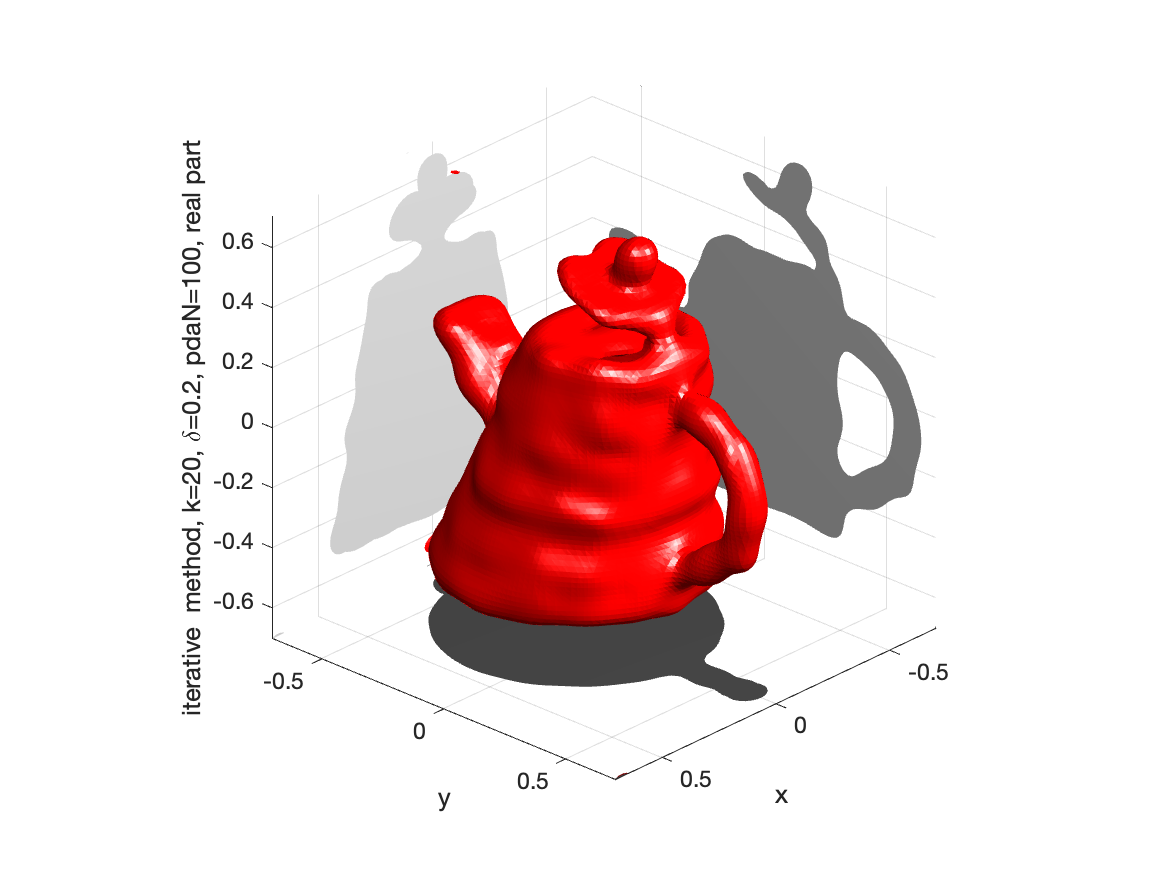} }
\subfloat[]{ \includegraphics[width=0.32\linewidth]{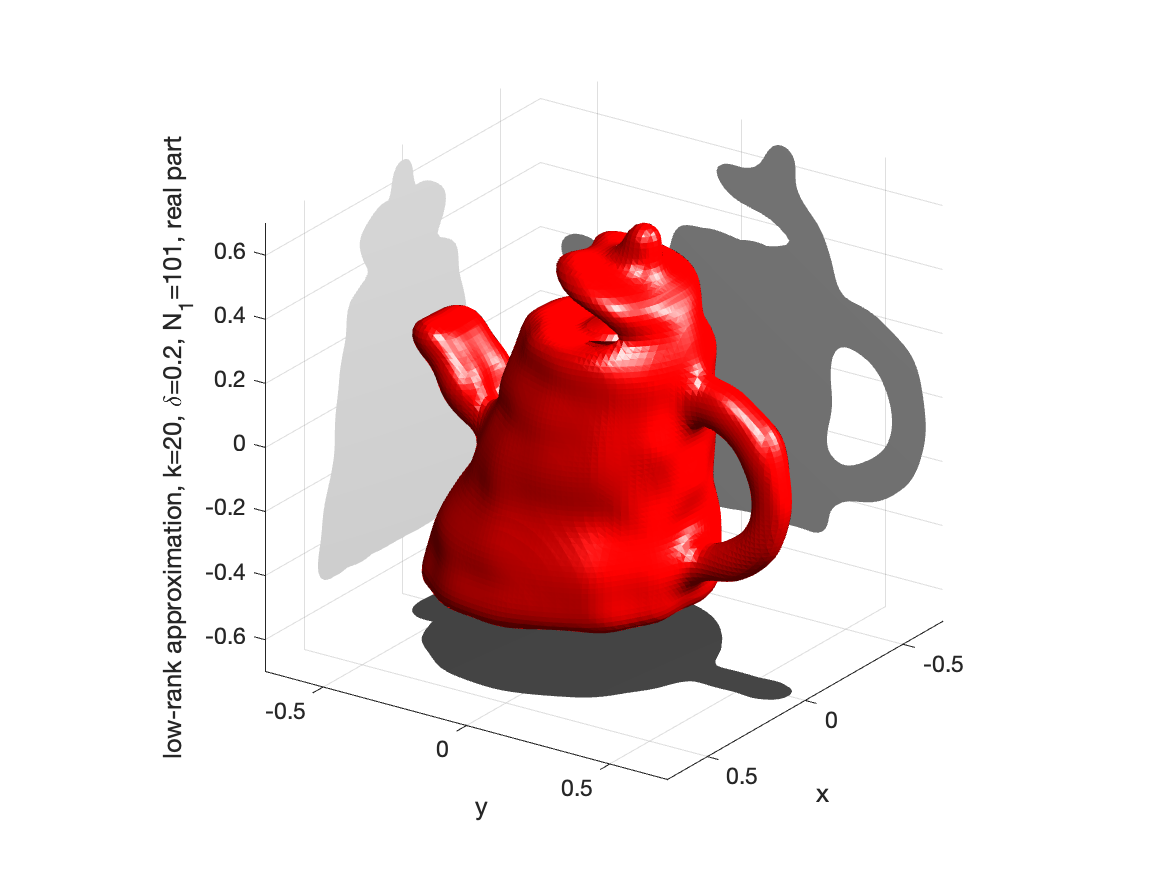} }\\

\caption{ 
Reconstruction of a teapot.
The first column: ground truth; the second column: the iterative method; the third column:  the proposed method. Wave number $k=20$, modeling error ${\rm rel}(k)=66.82\%$, noise level $\delta=0.2$, and  $N_1=N_2=101$.}\label{figure: comparison}
\end{figure}

\begin{figure}[htbp]  
\centering  
\subfloat[Iterative method]{ \includegraphics[width=0.35\linewidth]{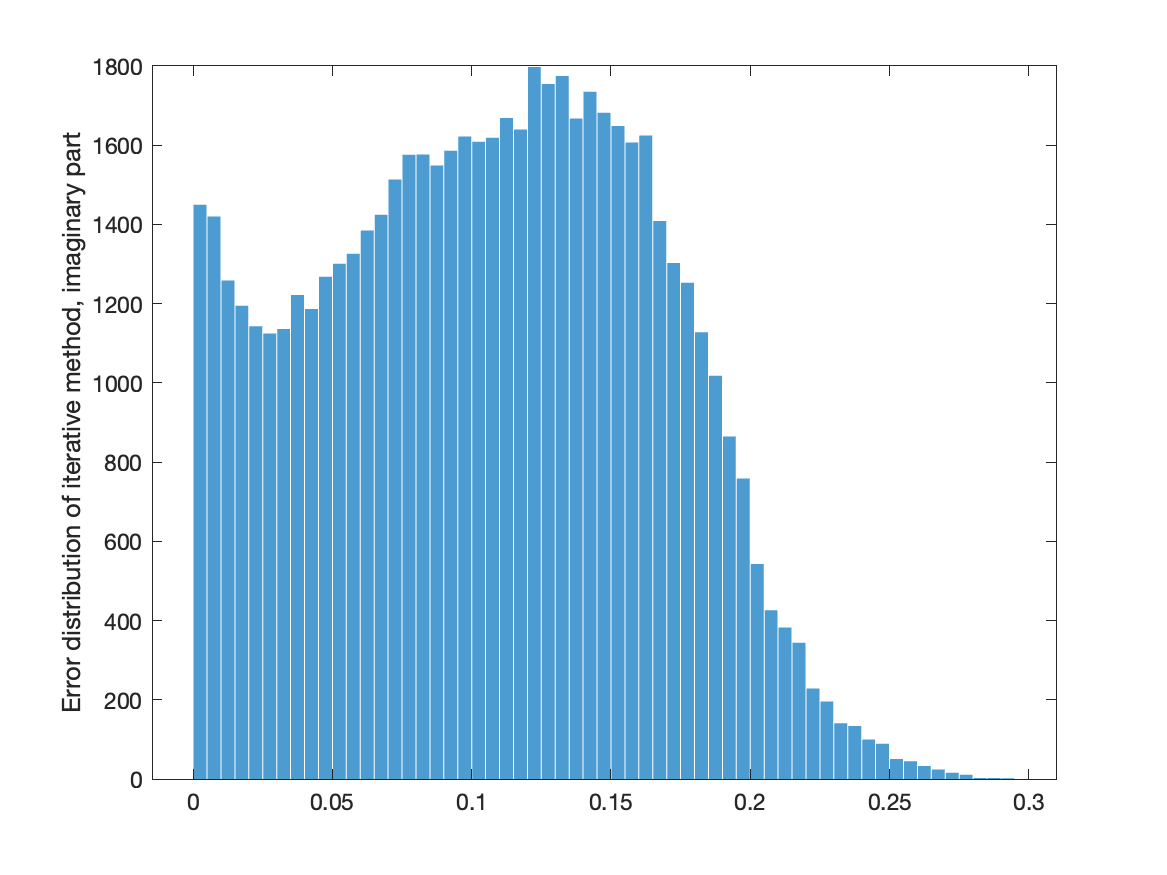} }
\subfloat[low-rank approximation]{ \includegraphics[width=0.35\linewidth]{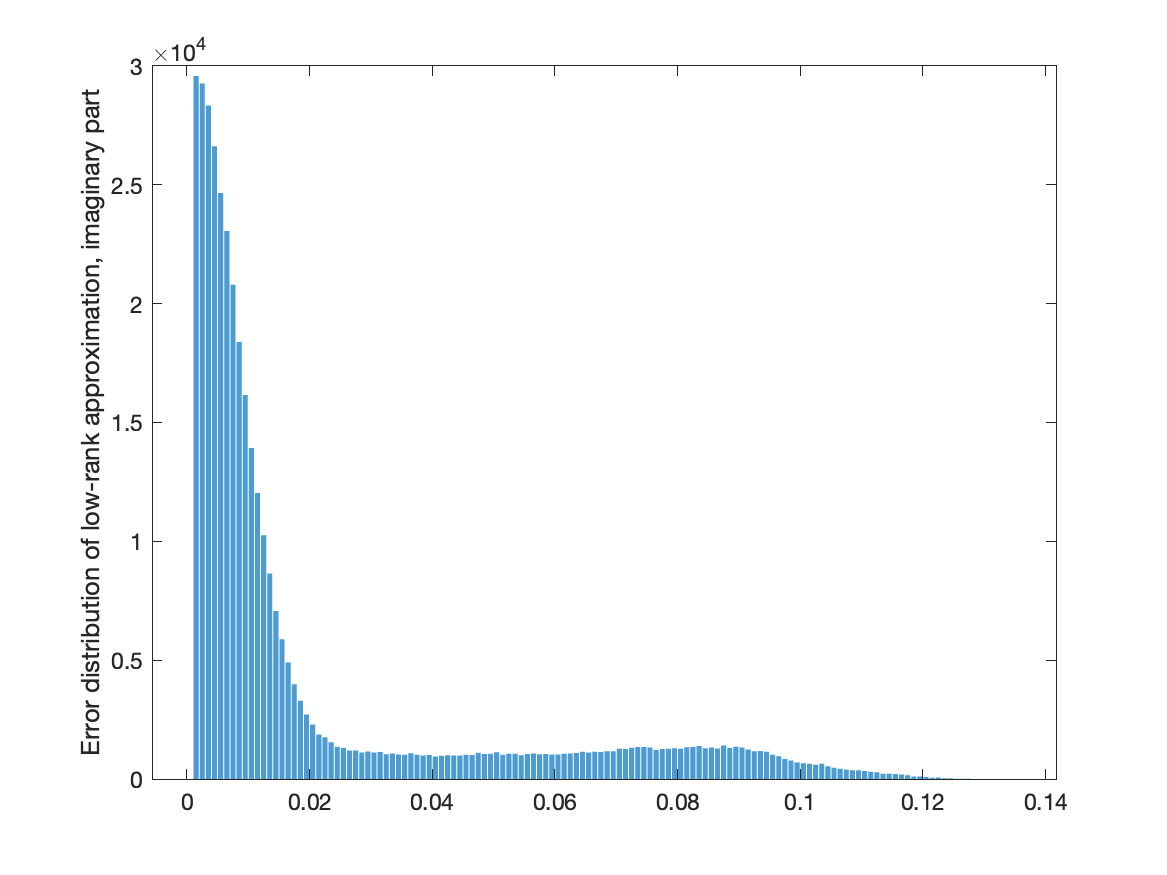} }

\caption{Reconstruction error of the imaginary part of the teapot. The imaginary part of the ground truth is zero. We plot the  errors ($x$-axis) against $91^3$ grid points ($y$-axis). Left: iterative method; right: low-rank approximation.  }\label{figure: comparison error}
\end{figure}

\subsubsection{Reconstruction using Tikhonov regularization with $H_c^s$ penalty}
The low-rank structure provides a versatile tool for regularization by penalizing the $H^s_c$-norm of solutions. The difference between the low-rank approximate solution \eqref{section stability formula pi_alpha q} and the Tikhonov regularized solution \eqref{section stability formula q_reg in H^s_c} is due to the additional term $\eta \chi_{m,n}^s$, and this will be a straightforward modification since the Sturm-Liouville eigenvalues $\chi_{m,n}$ have been conveniently computed according to Section \ref{subsection: computation of 3D PSWFs system}. We plot in Figure \ref{figure: H_c^s norm regularization} the reconstructions of two contrasts ``cross3D'' (top row) and ``teapot'' (bottom row) using the Tikhonov regularization with penalty terms in $H_c^{1/4}$ (middle column) and $H_c^{1/2}$ (right column), respectively.

\begin{figure}[htbp]  
\centering  
\subfloat[]{ \includegraphics[width=0.32\linewidth]{figures/cross3D_ground_iso_k10.eps} }
\subfloat[]{ \includegraphics[width=0.32\linewidth]{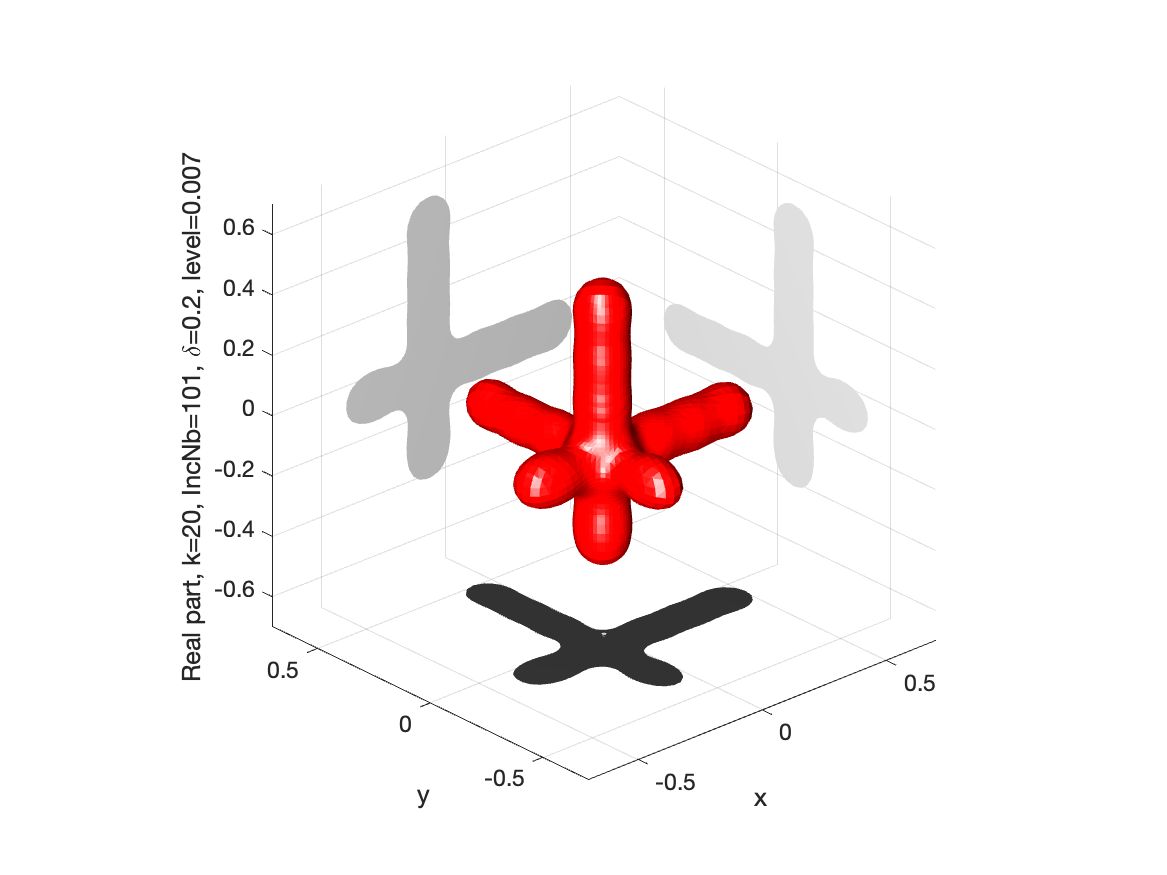} }
\subfloat[]{ \includegraphics[width=0.32\linewidth]{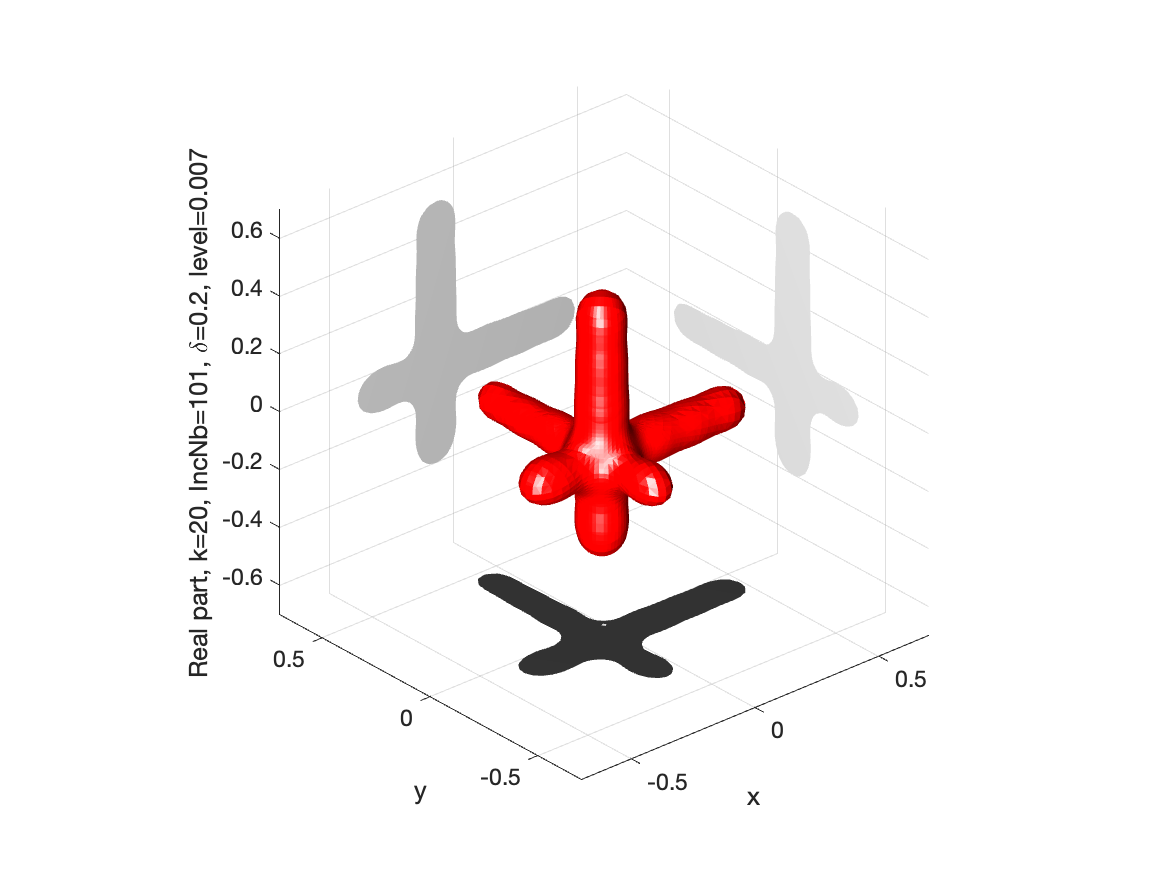} }\\
\subfloat[]{ \includegraphics[width=0.32\linewidth]{figures/teapot_GroundTruth.eps} }
\subfloat[]{ \includegraphics[width=0.32\linewidth]{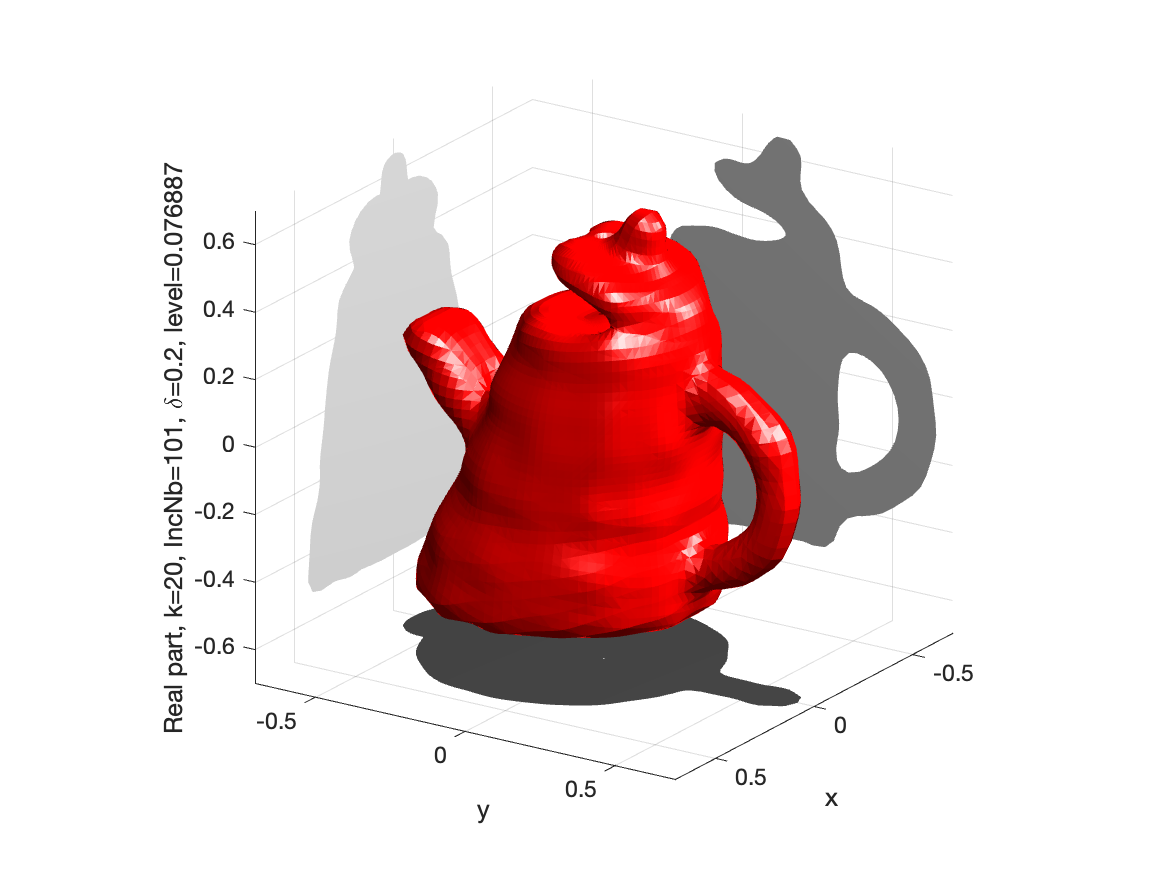} }
\subfloat[]{ \includegraphics[width=0.32\linewidth]{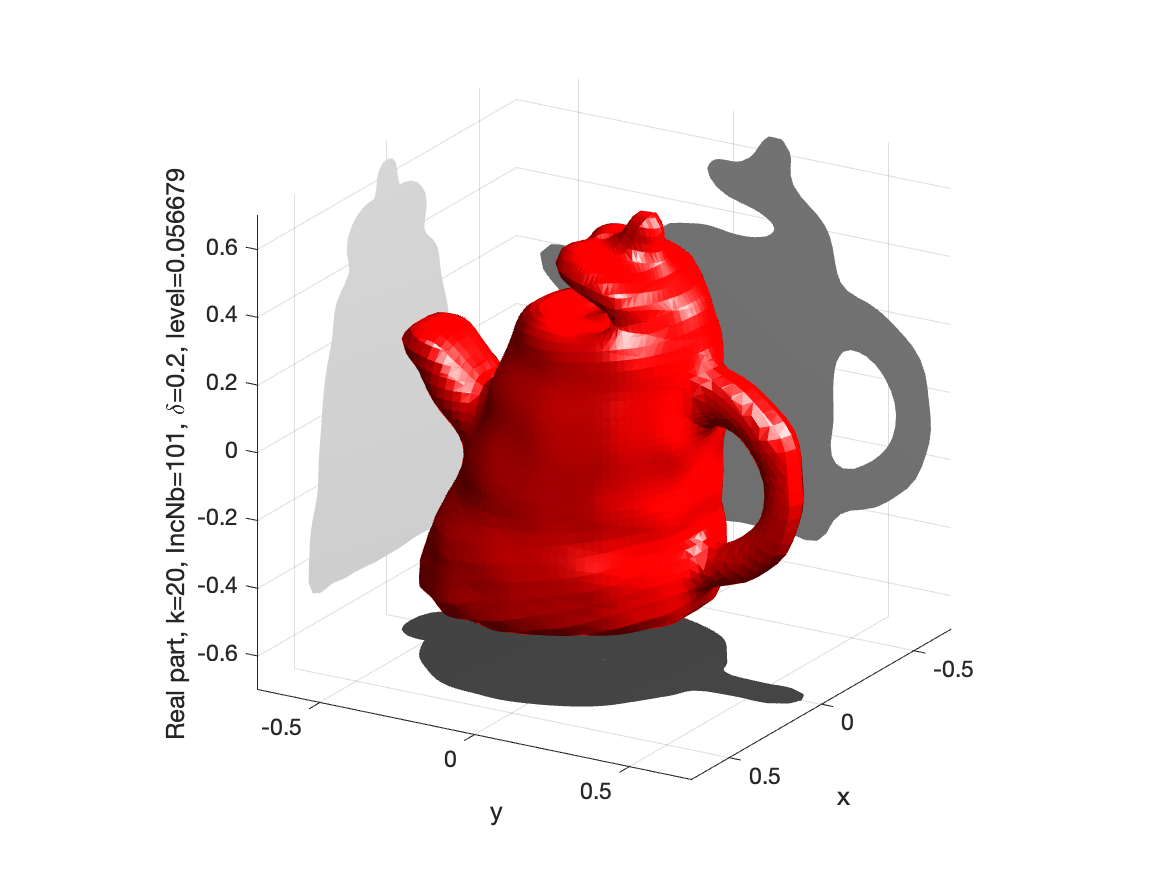} }\\

\caption{ 
Reconstructions via regularization with $H_c^s$ penalty term.
The first column: ground truth; the second column: $s=\frac{1}{4}$; the third column: $s=\frac{1}{2}$. Wave number $k=20$, noise level $\delta=0.2$,   $N_1=N_2=101$, and regularization parameter $\eta=10^{-4}$.}\label{figure: H_c^s norm regularization}
\end{figure}

\subsubsection{Localized imaging}
Finally we demonstrate the localized imaging technique in Section \ref{subsection: localized imaging}. In Figure \ref{figure: localized imaging} the unknown contrast consists of a targeting object (teapot supported in the unit ball) surrounded by other objects, and the goal is to only image the targeting teapot. The reconstruction by the proposed low-rank structure is plotted in the middle column, and the reconstruction by the tested iterative method is given in the right column. The double orthogonality offers a localized imaging technique by manipulating the data, which filters the data by its projection onto the 3D PSWFs with dominant prolate eigenvalues. The teapot is surrounded by three complex objects in the bottom case of Figure \ref{figure: localized imaging}, and its reconstruction by the low-rank approximation is still good while the computational cost remains the same despite the presence of surrounded complex objects.  This demonstrates the potential of the low-rank structure in data processing/filtering and  localized imaging techniques.

\begin{figure}[htbp]  
\centering  
\subfloat[Ground Truth]{ \includegraphics[width=0.32\linewidth]{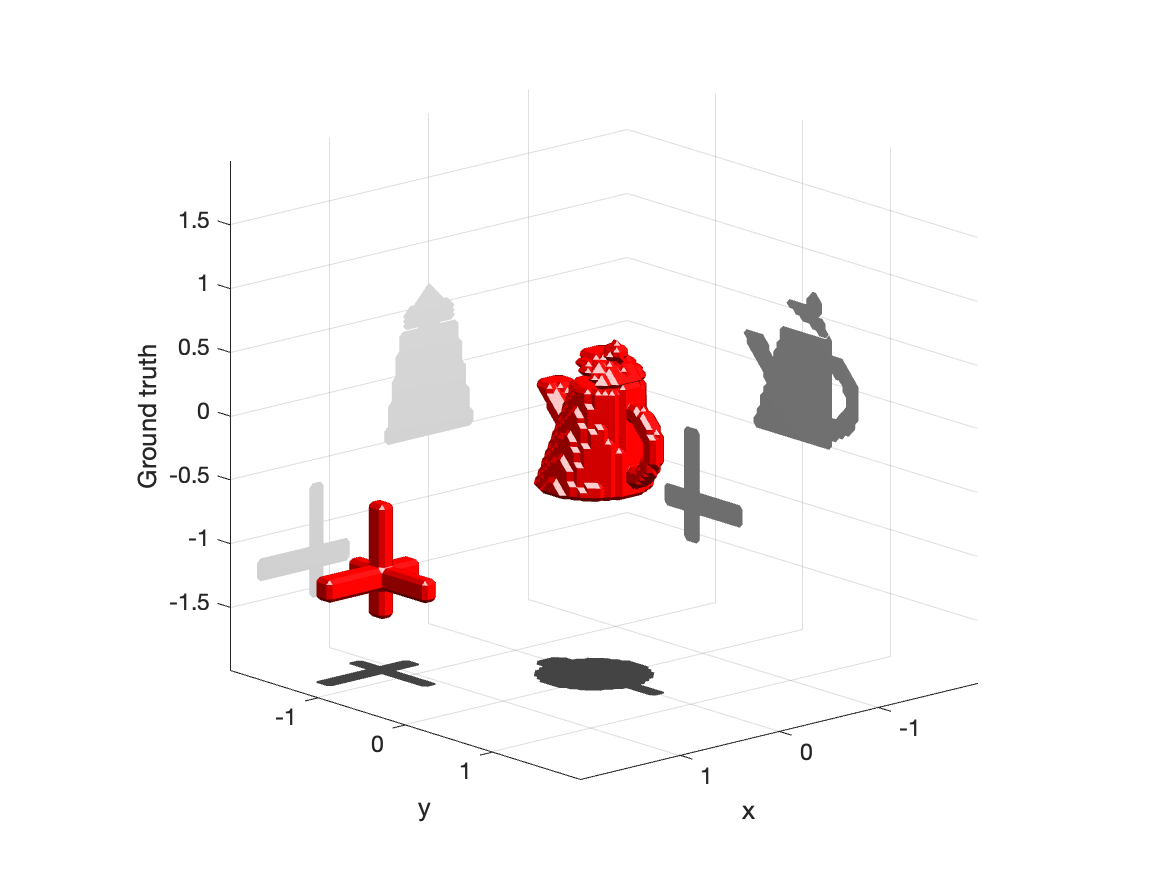} }
\subfloat[Low-rank]{ \includegraphics[width=0.32\linewidth]{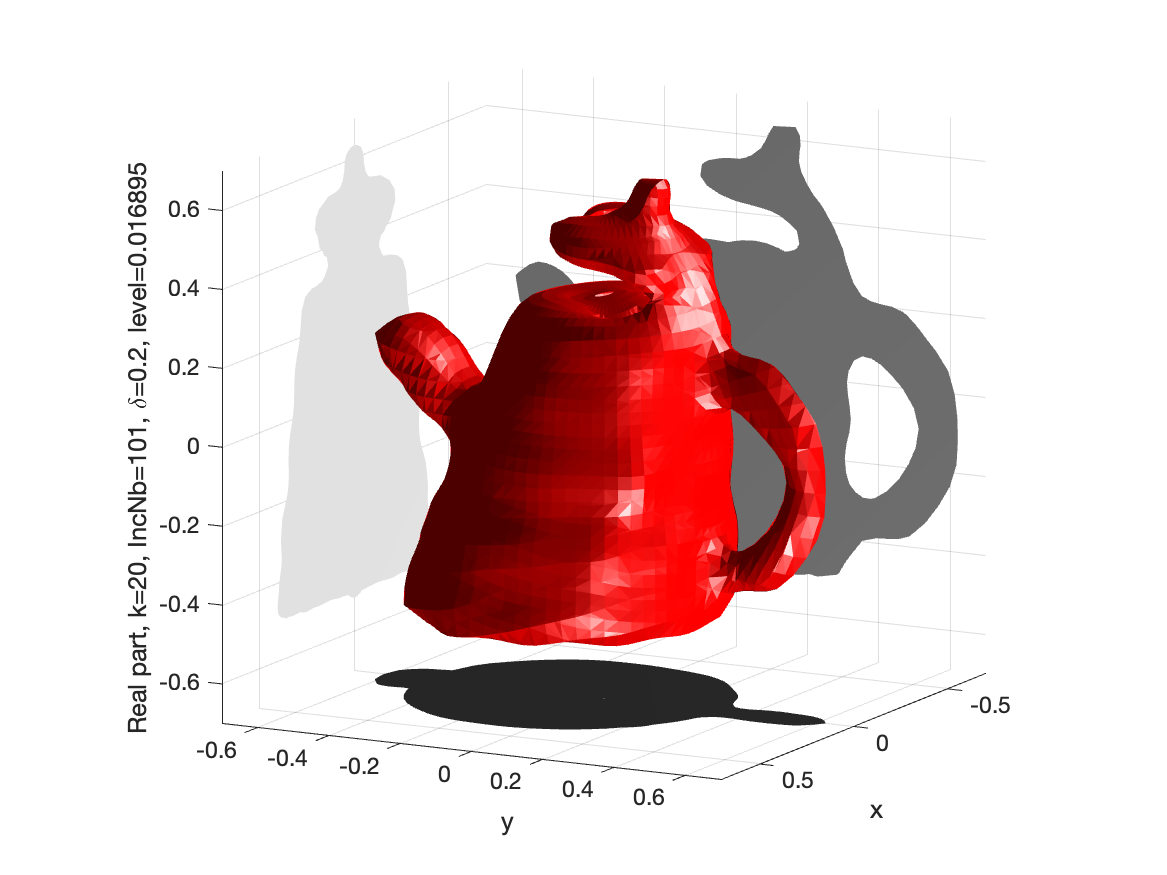} }
\subfloat[Iterative]{ \includegraphics[width=0.32\linewidth]{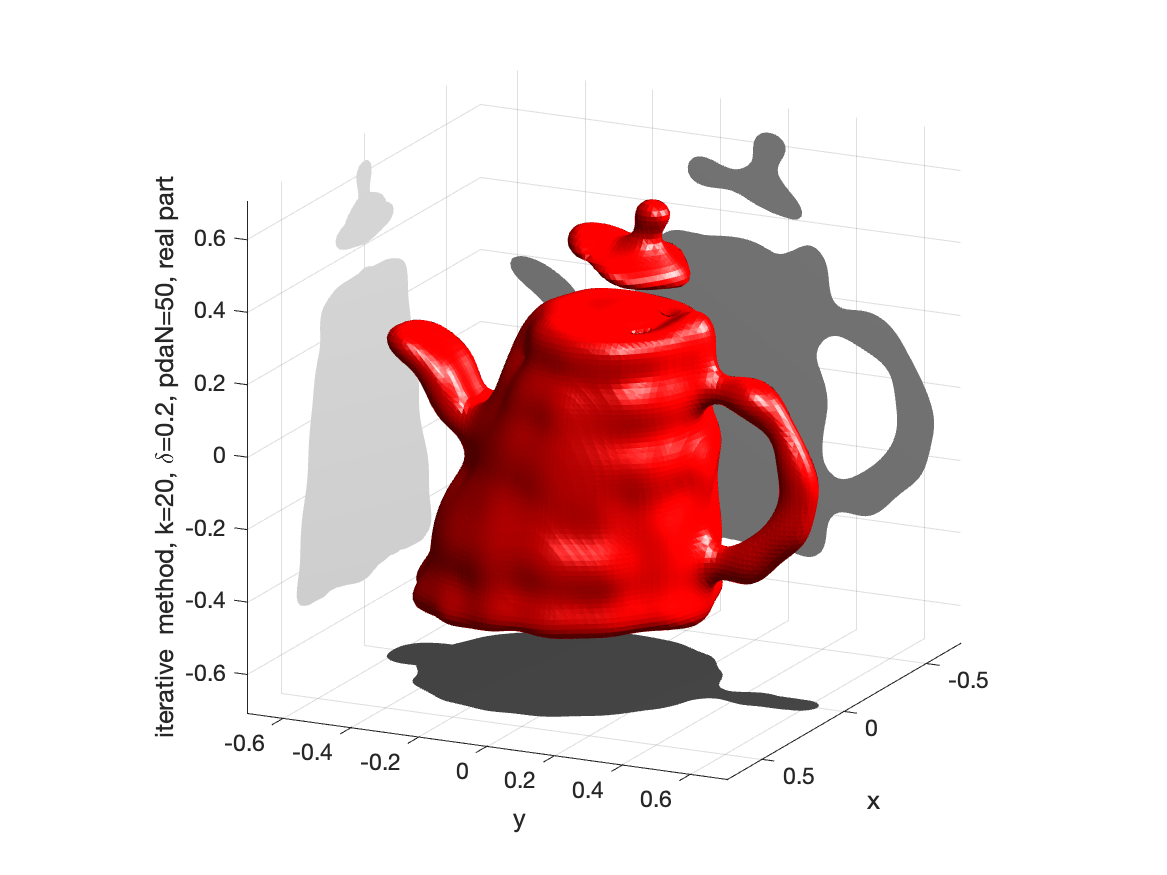} }
\\
\subfloat[Ground Truth]{ \includegraphics[width=0.32\linewidth]{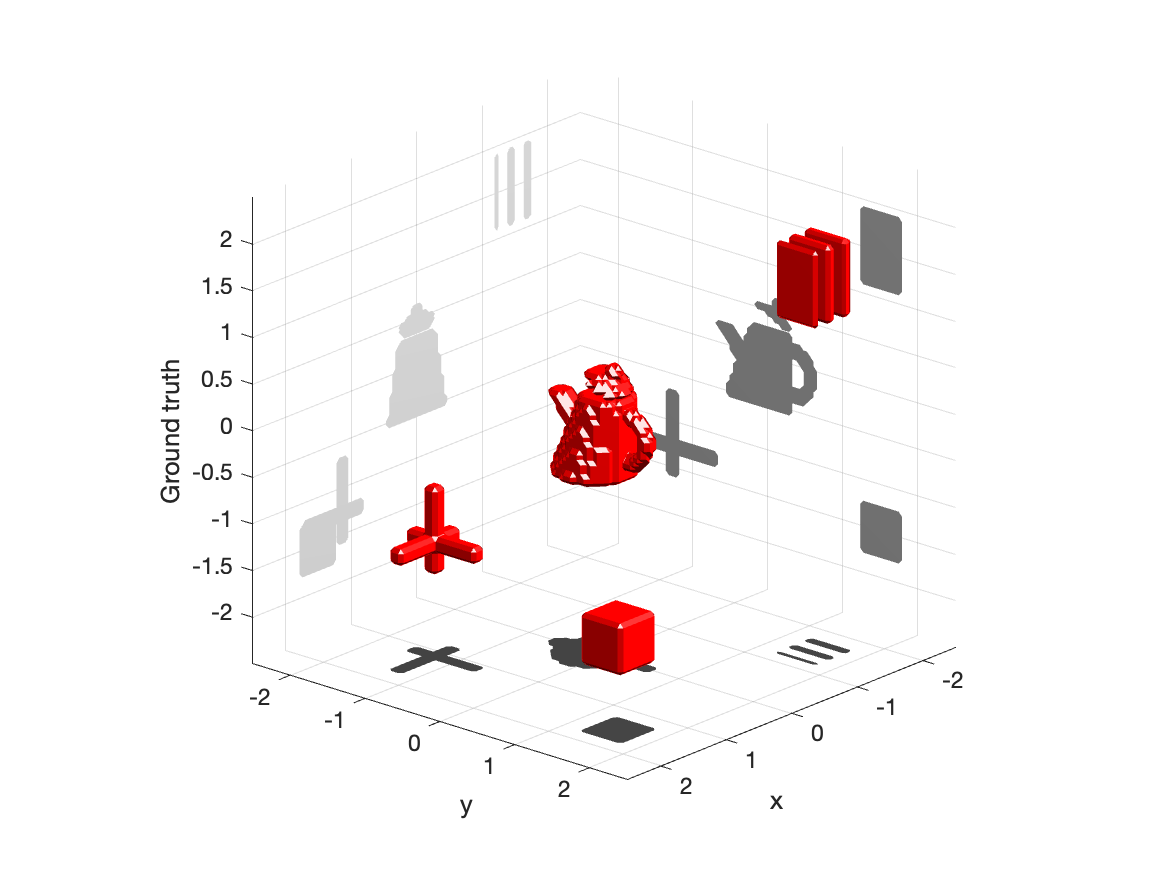} }
\subfloat[Low-rank]{ \includegraphics[width=0.32\linewidth]{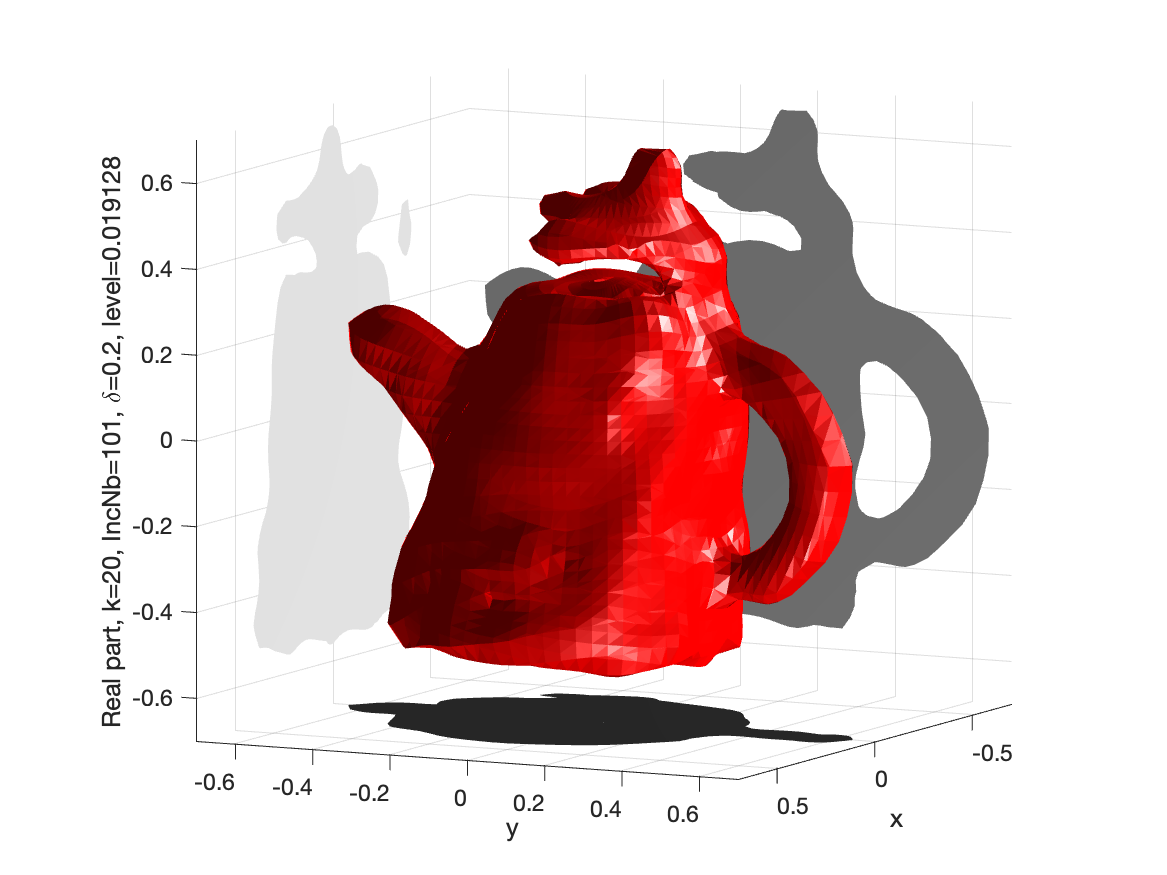}  }
\subfloat[Iterative]{ \includegraphics[width=0.32\linewidth]{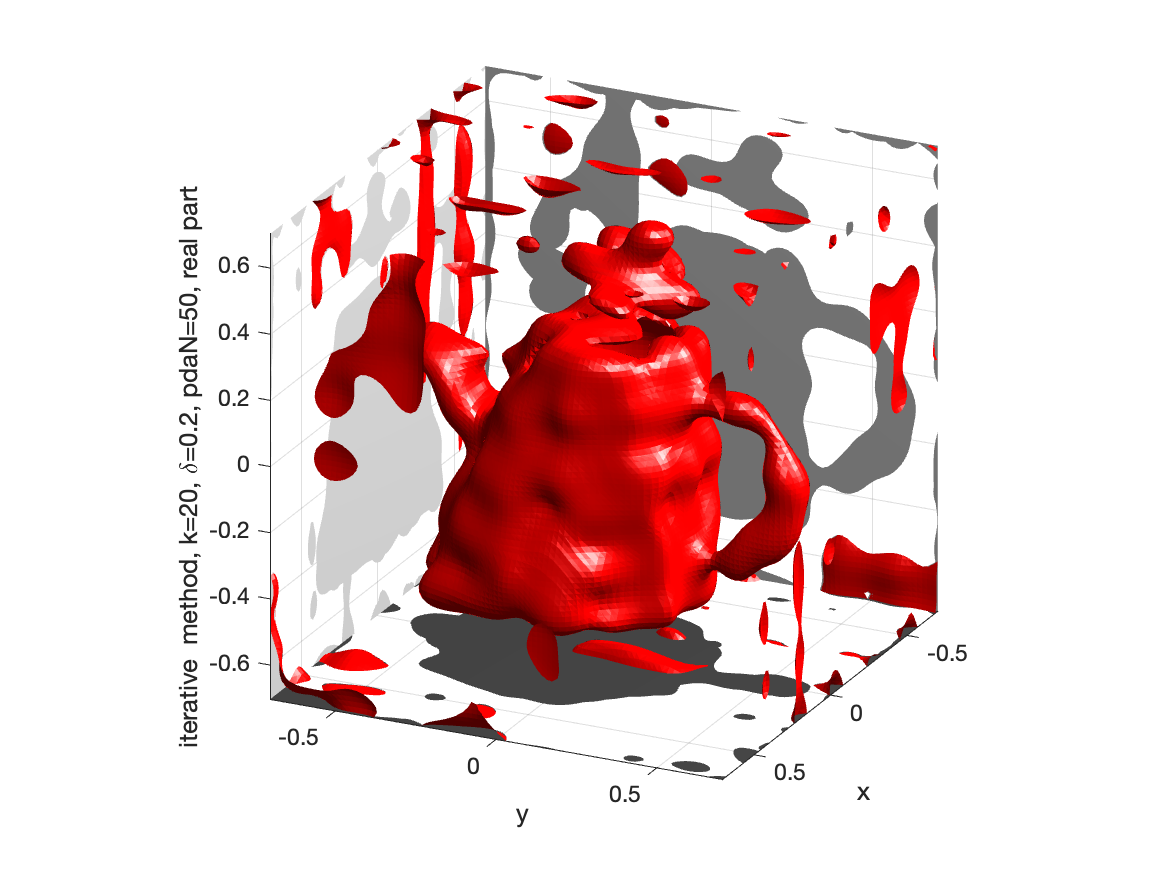} }
\caption{Imaging of a targeting teapot surrounded by other objects. Left column: ground truth; middle column: reconstruction via the low-rank structure; right column: reconstruction by the iterative method. $k=20,~\delta=20\%$.}\label{figure: localized imaging}
\end{figure}

\bibliographystyle{abbrv}
\bibliography{ref.bib}

@article{arens2015indicator,
  title={Indicator functions for shape reconstruction related to the linear sampling method},
  author={Arens, Tilo and Lechleiter, Armin},
  journal={SIAM Journal on Imaging Sciences},
  volume={8},
  number={1},
  pages={513--535},
  year={2015},
  publisher={SIAM}
}

@article{audibert2014generalized,
  title={A generalized formulation of the linear sampling method with exact characterization of targets in terms of farfield measurements},
  author={Audibert, Lorenzo and Haddar, Houssem},
  journal={Inverse Problems},
  volume={30},
  number={3},
  pages={035011},
  year={2014},
  publisher={IOP Publishing}
}

@article{colton1996simple,
  title={A simple method for solving inverse scattering problems in the resonance region},
  author={Colton, David and Kirsch, Andreas},
  journal={Inverse problems},
  volume={12},
  number={4},
  pages={383},
  year={1996},
  publisher={IOP Publishing}
}

@article{kirsch1998characterization,
  title={Characterization of the shape of a scattering obstacle using the spectral data of the far field operator},
  author={Kirsch, Andreas},
  journal={Inverse problems},
  volume={14},
  number={6},
  pages={1489},
  year={1998},
  publisher={IOP Publishing}
}

@article{kirsch2017remarks,
  title={Remarks on the Born approximation and the factorization method},
  author={Kirsch, Andreas},
  journal={Applicable Analysis},
  volume={96},
  number={1},
  pages={70--84},
  year={2017},
  publisher={Taylor \& Francis}
}

@article{moskow2008convergence,
  title={Convergence and stability of the inverse scattering series for diffuse waves},
  author={Moskow, Shari and Schotland, John C},
  journal={Inverse Problems},
  volume={24},
  number={6},
  pages={065005},
  year={2008},
  publisher={IOP Publishing}
}

@book{colton2012inverse,
  title={Inverse acoustic and electromagnetic scattering theory},
  author={Colton, David L and Kress, Rainer},
  volume={93},
  year={1998},
  publisher={Springer}
}

@article{zhou2024exploring,
author = {Zhou, Yuyuan and Audibert, Lorenzo and Meng, Shixu and Zhang, Bo},
title = {Exploring Low-Rank Structure for an Inverse Scattering Problem with Far-Field Data},
journal = {SIAM Journal on Applied Mathematics},
volume = {86},
number = {1},
pages = {179-205},
year = {2026},
doi = {10.1137/24M1663922},

URL = { 
    
        https://doi.org/10.1137/24M1663922
},
eprint = { 
    
        https://doi.org/10.1137/24M1663922
}
}

@article{ZLWZ20,
  title={Ball prolate spheroidal wave functions in arbitrary dimensions},
  author={Zhang, Jing and Li, Huiyuan and Wang, Li-Lian and Zhang, Zhimin},
  journal={Applied and Computational Harmonic Analysis},
  volume={48},
  number={2},
  pages={539--569},
  year={2020},
  publisher={Elsevier}
}

@article{slepian1961prolate,
  title={Prolate spheroidal wave functions, Fourier analysis and uncertainty—I},
  author={Slepian, David and Pollak, Henry O},
  journal={Bell System Technical Journal},
  volume={40},
  number={1},
  pages={43--63},
  year={1961},
  publisher={Wiley Online Library}
}

@article{Slepian64,
  title={Prolate spheroidal wave functions, Fourier analysis and uncertainty—IV: extensions to many dimensions; generalized prolate spheroidal functions},
  author={Slepian, David},
  journal={Bell System Technical Journal},
  volume={43},
  number={6},
  pages={3009--3057},
  year={1964},
  publisher={Wiley Online Library}
}

@article{meng23data,
  title={Data-driven basis for reconstructing the contrast in inverse scattering: Picard criterion, regularity, regularization, and stability},
  author={Meng, Shixu},
  journal={SIAM Journal on Applied Mathematics},
  volume={83},
  number={5},
  pages={2003--2026},
  year={2023},
  publisher={SIAM}
}

@article{greengard2024generalized,
  title={Generalized prolate spheroidal functions: algorithms and analysis},
  author={Greengard, Philip},
  journal={Pure and Applied Analysis},
  volume={6},
  number={3},
  pages={789--833},
  year={2024},
  publisher={Mathematical Sciences Publishers}
}

@article{osipov2013prolate,
  title={Prolate spheroidal wave functions of order zero},
  author={Osipov, Andrei and Rokhlin, Vladimir and Xiao, Hong},
  journal={Springer Ser. Appl. Math. Sci},
  volume={187},
  year={2013},
  publisher={Springer}
}

@article{novikov22,
  title={Reconstruction from the Fourier transform on the ball via prolate spheroidal wave functions},
  author={Isaev, Mikhail and Novikov, Roman G},
  journal={Journal de Math{\'e}matiques Pures et Appliqu{\'e}es},
  volume={163},
  pages={318--333},
  year={2022},
  publisher={Elsevier}
}

@article{audibert2024shape,
  title={Shape and parameter identification by the linear sampling method for a restricted Fourier integral operator},
  author={Audibert, Lorenzo and Meng, Shixu},
  journal={Inverse Problems},
  volume={40},
  number={9},
  pages={095007},
  year={2024},
  publisher={IOP Publishing}
}

@article{cakoni2025recovery,
  title={On the recovery of two function-valued coefficients in the Helmholtz equation for inverse scattering problems via inverse Born series},
  author={Cakoni, Fioralba and Meng, Shixu and Zhou, Zehui},
  journal={Inverse Problems},
    volume={41},
  number={7},
  pages={075004},
  year={2025}
}

@book{Abramowitz64,
  title={Handbook of mathematical functions with formulas, graphs, and mathematical tables},
  author={Abramowitz, Milton and Stegun, Irene A},
  volume={55},
  year={1948},
  publisher={US Government printing office}
}

@book{mclean2000strongly,
  title={Strongly elliptic systems and boundary integral equations},
  author={McLean, William Charles Hector},
  year={2000},
  publisher={Cambridge university press}
}

@article{gonzalez2010measurement,
  title={Measurement of areas on a sphere using Fibonacci and latitude--longitude lattices},
  author={Gonz{\'a}lez, {\'A}lvaro},
  journal={Mathematical geosciences},
  volume={42},
  pages={49--64},
  year={2010},
  publisher={Springer}
}

@article{burgel2019algorithm,
  title={Algorithm 1001: IPscatt—A MATLAB Toolbox for the Inverse Medium Problem in Scattering},
  author={B{\"u}rgel, Florian and Kazimierski, Kamil S and Lechleiter, Armin},
  journal={ACM Transactions on Mathematical Software (TOMS)},
  volume={45},
  number={4},
  pages={1--20},
  year={2019},
  publisher={ACM New York, NY, USA}
}

@article{ito2012direct,
  title={A direct sampling method to an inverse medium scattering problem},
  author={Ito, Kazufumi and Jin, Bangti and Zou, Jun},
  journal={Inverse Problems},
  volume={28},
  number={2},
  pages={025003},
  year={2012},
  publisher={IOP Publishing}
}

@article{potthast2010study,
  title={A study on orthogonality sampling},
  author={Potthast, Roland},
  journal={Inverse Problems},
  volume={26},
  number={7},
  pages={074015},
  year={2010},
  publisher={IOP Publishing}
}

@book{cakoni2014qualitative,
  title={A qualitative approach to inverse scattering theory},
  author={Cakoni, Fioralba and Colton, David},
year={2014},
  publisher={Springer}
}

@book{kirsch2007factorization,
  title={The factorization method for inverse problems},
  author={Kirsch, Andreas and Grinberg, Natalia},
  volume={36},
  year={2007},
  publisher={OUP Oxford}
}

@book{cakoni2011linear,
  title={The linear sampling method in inverse electromagnetic scattering},
  author={Cakoni, Fioralba and Colton, David and Monk, Peter},
  year={2011},
  publisher={SIAM}
}

@article{liu2022modified,
  title={Modified sampling method with near field measurements},
  author={Liu, Xiaodong and Meng, Shixu and Zhang, Bo},
  journal={SIAM Journal on Applied Mathematics},
  volume={82},
  number={1},
  pages={244--266},
  year={2022},
  publisher={SIAM}
}

@article{griesmaier2011multi,
  title={Multi-frequency orthogonality sampling for inverse obstacle scattering problems},
  author={Griesmaier, Roland},
  journal={Inverse Problems},
  volume={27},
  number={8},
  pages={085005},
  year={2011},
  publisher={IOP Publishing}
}

@article{harris2020orthogonality,
  title={Orthogonality sampling method for the electromagnetic inverse scattering problem},
  author={Harris, Isaac and Nguyen, Dinh-Liem},
  journal={SIAM Journal on Scientific Computing},
  volume={42},
  number={3},
  pages={B722--B737},
  year={2020},
  publisher={SIAM}
}

@article{vapnik1995support,
  title={Support-vector networks},
  author={Vapnik, Vladimir},
  journal={Machine learning},
  volume={20},
  pages={273--297},
  year={1995}
}

@article{desai2025neural,
  title={A Neural Network Enhanced Born Approximation for Inverse Scattering},
  author={Desai, Ansh and Ma, Jonathan and Lahivaara, Timo and Monk, Peter},
  journal={arXiv preprint arXiv:2503.01596},
  year={2025}
}

@book{goodfellow2016deep,
  title={Deep learning},
  author={Goodfellow, Ian and Bengio, Yoshua and Courville, Aaron and Bengio, Yoshua},
  volume={1},
  number={2},
  year={2016},
  publisher={MIT press Cambridge}
}

@article{zhou2025recovery,
  title={On the recovery of two function-valued coefficients in the Helmholtz equation for inverse scattering problems via neural networks},
  author={Zhou, Zehui},
  journal={Advances in Computational Mathematics},
  volume={51},
  number={1},
  pages={12},
  year={2025},
  publisher={Springer}
}

@article{meng2024kernel,
  title={A kernel machine learning for inverse source and scattering problems},
  author={Meng, Shixu and Zhang, Bo},
  journal={SIAM Journal on Numerical Analysis},
  volume={62},
  number={3},
  pages={1443--1464},
  year={2024},
  publisher={SIAM}
}

@article{sun2024learned,
  title={The learned range test method for the inverse inclusion problem},
  author={Sun, Shiwei and Alberti, Giovanni S},
  journal={SIAM Journal on Applied Mathematics},
  year={2025}
}

@article{liu2018extended,
  title={Extended sampling method in inverse scattering},
  author={Liu, Juan and Sun, Jiguang},
  journal={Inverse Problems},
  volume={34},
  number={8},
  pages={085007},
  year={2018},
  publisher={IOP Publishing}
}

@article{li2024reconstruction,
  title={Reconstruction of inhomogeneous media by an iteration algorithm with a learned projector},
  author={Li, Kai and Zhang, Bo and Zhang, Haiwen},
  journal={Inverse Problems},
  volume={40},
  number={7},
  pages={075008},
  year={2024},
  publisher={IOP Publishing}
}

@article{liu2022deterministic,
  title={Deterministic-statistical approach for an inverse acoustic source problem using multiple frequency limited aperture data},
  author={Liu, Yanfang and Wu, Zhizhang and Sun, Jiguang and Zhang, Zhiwen},
  journal={Inverse Problems and Imaging},
 volume={17},
  number={6},
  pages={1329–-1345},
  year={2022}
}

@article{khoo2019switchnet,
  title={SwitchNet: a neural network model for forward and inverse scattering problems},
  author={Khoo, Yuehaw and Ying, Lexing},
  journal={SIAM Journal on Scientific Computing},
  volume={41},
  number={5},
  pages={A3182--A3201},
  year={2019},
  publisher={SIAM}
}

\end{document}